	\renewcommand{\geq}{\geqslant}
	\renewcommand{\leq}{\leqslant}
	\renewcommand{\phi}{\varphi}
	\providecommand{\corollaryname}{Corollary}
	\providecommand{\definitionname}{Definition}
	\providecommand{\examplename}{Example}
	\providecommand{\lemmaname}{Lemma}
	\providecommand{\propositionname}{Proposition}
	\providecommand{\remarkname}{Remark}
	\providecommand{\theoremname}{Theorem}
	\providecommand{\setupname}{Setup}
	\providecommand{\conjecturename}{Conjecture}
	\providecommand{\questionname}{Question}
	\providecommand{\objectivename}{Objective}
	\providecommand{\aimname}{Aim}
	\theoremstyle{plain}
		\newtheorem{thm}{\protect\theoremname}[section] 
		\newtheorem{prop}[thm]{\protect\propositionname}
		\newtheorem{lem}[thm]{\protect\lemmaname}
		\newtheorem{cor}[thm]{\protect\corollaryname}
	\theoremstyle{definition}
		\newtheorem{defn}[thm]{\protect\definitionname}
		\newtheorem{example}[thm]{\protect\examplename}
		\newtheorem{setup}[thm]{\protect\setupname}
	\theoremstyle{remark}
		\newtheorem{rem}[thm]{\protect\remarkname}
	\numberwithin{figure}{section}
	\numberwithin{equation}{section}
	\newenvironment{acknowledgements}{
		\begin{abstract}} {\end{abstract}}
	\tikzset{commutative diagrams/.cd, 
		mysymbol/.style = {start anchor=center, end anchor = center, draw = none}}
	\let\amph=& 
	\newcommand{\BA}{\mathbb{A}}
	\newcommand{\BC}{\mathbb{C}}
	\newcommand{\BD}{\mathbb{D}}
	\newcommand{\BE}{\mathbb{E}}
	\newcommand{\BF}{\mathbb{F}}
	\newcommand{\BH}{\mathbb{H}}
	\newcommand{\BR}{\mathbb{R}}
	\newcommand{\BZ}{\mathbb{Z}}
		\newcommand{\Aut}{\operatorname{Aut}\nolimits}
		\newcommand{\rad}{\operatorname{rad}\nolimits}
  \newcommand{\zembyk}{\scaleobj{0.7}{\mbox{\ScissorRight\hspace{-0.5mm}}}}
\DeclareRobustCommand{\rvdots}{%
  \vbox{
    \baselineskip4\p@\lineskiplimit\z@
    \kern-\p@
    \hbox{.}\hbox{.}\hbox{.}
  }}
\subjclass[2020]{Primary 16G20; Secondary 05E10, 05C10, 16D90}
\definecolor{CSBRed}{RGB}{190, 15, 52}
\definecolor{kaylapink}{RGB}{219, 48, 122}
\title{A geometric model for semilinear locally gentle algebras}
    \author[Banaian]{Esther Banaian}
        \address{
		Department of Mathematics\\
		Aarhus University\\
		Ny Munkegade 118\\
		8000 Aarhus C\\
		Denmark}
        \email{banaian@math.au.dk}
    \author[Bennett-Tennenhaus]{Raphael Bennett-Tennenhaus}
        \address{
		Department of Mathematics\\
		Aarhus University\\
		Ny Munkegade 118\\
		8000 Aarhus C\\
		Denmark}
        \email{raphaelbennetttennenhaus@gmail.com}
            \author[Jacobsen]{Karin M. Jacobsen}
        \address{
		Department of Mathematics\\
		Aarhus University\\
		Ny Munkegade 118\\
		8000 Aarhus C\\
		Denmark}
        \email{karin.jacobsen@math.au.dk}
            \author[Wright]{Kayla Wright}
        \address{
		School of Mathematics\\
		University of Minnesota\\
		504 Vincent Hall\\
		206 Church Street\\
            Minneapolis, MN 55455\\
           United States of America}
        \email{kaylaw@umn.edu}
\date{}
\begin{document}

\maketitle

\begin{abstract}
We consider certain generalizations of gentle algebras that we call semilinear locally gentle algebras. 
These rings are examples of semilinear clannish algebras as introduced by the second author and Crawley-Boevey \cite{BenTenCraBoe}. 
We generalise the notion of a nodal algebra from work of Burban and Drozd \cite{Burban-Drozd-derived-nodal-algebras} and prove that semilinear gentle algebras are nodal by adapting a theorem of Zembyk \cite{Zembyk-Skewed-Gentle-A}.  We also provide a geometric realization of Zembyk's proof, which involves cutting the surface into simpler pieces in order to endow our locally gentle algebra with a semilinear structure. We then consider this surface glued back together, with the seams in place, and use it to give a geometric model for the finite-dimensional modules over the semilinear locally gentle algebra.


\end{abstract}

\tableofcontents
\section{Introduction}


\subsection{Background} We look at rings which we call \emph{semilinear locally gentle algebras}. 
To provide context we recall and relate gentle algebras, nodal algebras and surface algebras.  

\subsubsection*{Gentle algebras. } Assem and Skowro\'{n}ski \cite{assem-skowronski-gentle} defined what are now known as 
\emph{gentle algebras} as 
iterated tilted algebras of type $\mathbb{A}$ and $\tilde{\mathbb{A}}$.  
These comprise an important subclass of the class of \emph{string algebras} defined by Butler and Ringel  \cite{Butler-Ringel-string-algebras}, and hence both their 
modules 
and their Auslander--Reiten sequences have an explicit description. 
Gentle algebras also benefit from well-behaved homological properties, and have been studied intensely from a range of perspectives; 
see \cite{arnesen-laking-pauksztello,bekkert-merklen-derived-cat-gentle,BenTenthesis,canakci-pauksztello-schroll,Dequene-Jordan-recoverability,geiss-reiten-iwanaga-gorenstein,kaclk-singularity,Marczinzik-Rubey-Stump,STTYW,Shen-Wu-magnitude}. 

Bessenrodt and Holm \cite{Bessenrodt-Holm-weighted} introduced \emph{locally gentle algebras} using a direct generalisation of the gentle algebras from \cite{assem-skowronski-gentle} to include infinite-dimensional algebras such as $K[x,y]/(xy)$, the coordinate ring of a nodal point. 
Locally gentle algebras define a subclass of the string algebras considered by Crawley-Boevey \cite{craboe-inf-dimstring}, who classified finitely generated modules over such algebras.  
Other generalisations of  gentle algebras provide insight outside the focus of finite-dimensional algebras; see \cite{bessholmskewgentle,BenTen2023,ricke}.



In more recent work of the second author and Crawley-Boevey \cite{BenTenCraBoe},  \emph{semilinear clannish algebras} were introduced. 
Such rings simultaneously generalise locally gentle algebras and the \emph{clannish algebras} introduced by Crawley-Boevey \cite{Crawley-Boevey-clans}. 
For semilinear clannish algebras one uses the \emph{semilinear path algebra} in which each arrow comes equipped with an element of the automorphism group of the underlying division ring so that the quiver representations that correspond to modules must be made up of semilinear maps. 

In this article, we look at a particular class of semilinear clannish algebras which we call \emph{semilinear locally gentle algebras}. These rings generalise locally gentle algebras, but include other examples, such as $K[x,y;\sigma,\sigma^{-1}]/(xy)$ for an automorphism $\sigma$ of $K$. 
When $\sigma$ is the Frobenius automorphism of a perfect field $K$ of positive characteristic,  finite-dimensional $K[x,y;\sigma,\sigma^{-1}]/(xy)$-modules correspond to Dieudonn\'e modules.

\subsubsection*{Nodal algebras. }
Burban and Drozd introduced \emph{nodal algebras} in \cite{Burban-Drozd-derived-nodal-algebras},  inspired by work of Drozd and Greuel on classifications of vector bundles on projective curves \cite{drozd-greuel-projective-curves}. 
The definition of a nodal algebra requires the existence of an embedding into a hereditary algebra  satisfying compatibility conditions. 
These conditions arose in work of Drozd \cite{drozd-purely-noetherian}. Such conditions characterise the representation-tame \emph{purely noetherian complete} algebras, and it is seen in \cite{Burban-Drozd-derived-nodal-algebras} that said conditions are particularly well-suited to study the derived category of a nodal algebra using that of its connected hereditary algebra.
Burban and Drozd have since generalised this concept to the notion of a \emph{nodal ring} \cite{Burban-Drozd-derived-nodal-rings}. 

The representation theory of nodal algebras of classical and extended Dynkin types have been studied extensively. 
Type $\BA$ were studied by Drozd and Zembyk in \cite{nodal-type-A}. 
These authors also looked at type $\BD$  in \cite{nodal-type-D}. 
Type $\BE$ nodal algebras were studied by Drozd, Golovashchuk, and Zembyk \cite{nodal-type-E}. 
Of particular interest in this article are nodal algebras of classical type $\BA$. 
Zembyk \cite{Zembyk-Skewed-Gentle-A} showed that gentle 
algebras are all nodal algebras of type $\BA$.  
The arguments used in \cite{Zembyk-Skewed-Gentle-A} define a focal point in this article.

\subsubsection*{Surface  algebras. }
Assem, Br{\"u}stle, Charbonneau--Jodoin and Plamondon \cite{Assem-Brustle-Charbonneau-Jodoin-Plamondon-gentle-arising-from-surface-triangulations} studied the \textit{Jacobian algebra} of a \textit{quiver with potential} in the sense of Derksen, Weyman and Zelevinsky \cite{derk-wey-zel-1-mutations}, defined by a triangulation of a non-punctured marked surface. The main result in \cite{Assem-Brustle-Charbonneau-Jodoin-Plamondon-gentle-arising-from-surface-triangulations} says that the cluster-tilted algebras of type $\mathbb{A}$ or $\tilde{\mathbb{A}}$ is exactly the class of Jacobian algebras arising from a triangulation of a disc or an annulus. 
Moreover finite-dimensional modules
for such algebras may be modelled using arcs on this surface, and morphisms between them also have a geometric interpretation. 
Rings defined by topological spaces, and the interplay between representations and geometry, have played an important role in algebra; see for example \cite{Amiot-Plamondon-Schroll-derived-invariant,Baur-Laking,Baur-Schroll-Gentle-Extensions,bennetttennenhaus2023semilinear,Bial-Erdmann-Hajduk-Skowronski-Yamagata,Chang-Schroll-exceptional-sequences,Fu-Geng-Liu-Zhou-support-tau-tilting,GarverPatriasThomas2023,geiss2022schemes,HJS-higher-homological-gentle,Gelfand-Gelfand-Retakh-I, labardini2023gentle,LabFra-quivers-iv,lekili2023homological, opper2018geometric}.

The observations in \cite{Assem-Brustle-Charbonneau-Jodoin-Plamondon-gentle-arising-from-surface-triangulations} have since been generalised to the setting of gentle algebras. 
Firstly, Baur and Coelho-Simoes \cite{Baur-Coelho-Simoes-geometric-model-module-cat-gentle} showed that every gentle algebra arises as a so-called \emph{tiling algebra}, and also described how the surface and tiling together provide a geometric model for the category of finite-dimensional modules. 
Palu, Pilaud and Plamondon  
\cite{Palu-Pilaud-Plamondon-non-kissing-non-crossing} generalised this observation to locally gentle algebras  using the language of \emph{surface dissections}. 
The language used in \cite{Palu-Pilaud-Plamondon-non-kissing-non-crossing} is central to some of the main results in this article. 


\subsection{Main results}

The locally gentle algebras from  \cite{Bessenrodt-Holm-weighted} are defined using a \emph{locally gentle} pair  $(Q,Z)$, so $Q$ is a quiver and $Z$ is a set of paths in $Q$ of length $2$. 
The proof of the main result from \cite{Zembyk-Skewed-Gentle-A} involves the construction of a new quiver $Q^{\zembyk}(Z)$ from $(Q,Z)$.

\begin{figure}[H]
	\begin{subfigure}[c]{0.35\textwidth}   
	\begin{tikzpicture}
\node (1'') at (2,0.5) {1};
	\node (2'') at (2.75,1.1) {$2$};
	\node (3'') at (2.75,-0.1) {$3$};
	
	\node (5'') at (3.75,1.1) {$5$};
        \node (4'') at (3.75,-0.1) {$4$};
	\node (6'') at (4.5,0.5) {6};

	\draw[->] (1'') to (2'');
	\draw[->] (2'') to node[near start, name=a] {} 
 node[near end, name=a'] {} (3'');
	\draw[->] (3'') to (1'');
	\draw[->] (5'') to node[near start, name=b] {} 
 node[near end, name=b'] {} (2'');
	\draw[->] (4'') to node[near start, name=c] {} 
 node[near end, name=c'] {} (5'');
	\draw[->] (3'') to node[near start, name=d] {} 
 node[near end, name=d'] {} (4'');
	\draw[->] (5'') to (6'');

 \draw[dashed, bend left] (b') to (a);
  \draw[dashed, bend left] (c') to (b);
   \draw[dashed, bend left] (d') to (c);
    \draw[dashed, bend left] (a') to (d);
  
\end{tikzpicture}
 	\end{subfigure}
	\begin{subfigure}[c]{0.24\textwidth}   
	\begin{tikzpicture}

	\node (1) at (4.75,0.5) {1};
	\node (2) at (5.5,1) {$2'$};
	\node (3) at (5.5,0) {$3'$};
	\node (3') at (6.1,0) {$3''$};
	\node (2') at (6.1,1) {$2''$};
	\node (5) at (7,1) {$5'$};
        \node (4') at (7,0) {$4'$};
         \node (4) at (7.6,0) {$4''$};
	\node (5') at (7.6,1) {$5''$};
	\node (6) at (8.35,0.5) {6};

	\draw[->] (1) to (2);
	\draw[->] (2) to (3);
	\draw[->] (3) to (1);
	\draw[->] (5) to (2');
	\draw[->] (4) to (5');
	\draw[->] (3') to (4');
	\draw[->] (5') to (6);

\end{tikzpicture}
  	\end{subfigure}

 	\caption{
Quiver $Q$ with zero-relations $Z$ depicted by dashes (left), 
  and the corresponding quiver 
$ Q^{\protect\zembyk}(Z)$, inspired by   \cite[p. 649, Theorem]{Zembyk-Skewed-Gentle-A} (right).}
    \label{fig-intro-quivers}
\end{figure}

\Cref{thm-main-result-1}   geometrically realises the construction of $Q^{\zembyk}(Z)$ from the pair $(Q,Z)$ using surface dissections in the sense of \cite{Palu-Pilaud-Plamondon-non-kissing-non-crossing}. 
More specifically, we start with a surface dissection $(S,D)$ corresponding to a locally gentle pair $(Q,Z)$. 
We then choose a subset $R^{*}$ of the dual dissection $D^{*}$ and define what we call the \emph{split}  of $S$, which is a collection of disjoint surface dissections found by slicing up $(S,D)$ is a particular way. 
The subset $R^{*}$ of the arcs in $D^{*}$ is  determined by the \emph{relational} vertices in our quiver meaning those arising as $h(a)=t(b)$ for some zero-relation  $ba\in Z$.

\begin{figure}[H]

	\begin{subfigure}[c]{0.35\textwidth}   
	\begin{tikzpicture}

 \draw[rounded corners] (6,0.6)--(6,2.1)--(10,2.1)--(10,0.1)--(6,0.1)--(6,0.6);

        \begin{scope}[blue]
        \node[]  (a') at (6.05,2.05) {$\bullet$};
        \node[] (b') at (6.5,0.1) {$\bullet$};
        \node[] (c') at (7.5,1.35) {$\bullet$};
        \node[] (d') at (8.5,2.1) {$\bullet$};
        \node[] (e') at (9,0.1) {$\bullet$};
        \node[] (f') at (10,1.6) {$\bullet$};

        \draw (a'.center) to[bend right] (c'.center);
        \draw (d'.center) to[bend left] (c'.center);
        \draw (b'.center) to[bend left] (c'.center);
        \draw (b'.center) to[bend left] (e'.center);
        \draw (d'.center) to[bend left] (e'.center);
        \draw (f'.center) to[bend left] (e'.center);

        \end{scope}
        \begin{scope}[red, densely dashed]
        \node (i') at (7,2.1) {$\bullet$};
        \node (ii') at (6,0.6) {$\bullet$};
        \node (iii') at (8.5,1.6) {$\bullet$};
        \node (iv') at (7.75,0.1) {$\bullet$};
        \node (v') at (9.5,2.1) {$\bullet$};
        
        \draw (i'.center) to[bend right] (iii'.center);
        \draw (ii'.center) to[bend right] (iii'.center);
        \draw (iv'.center) to[bend right] (iii'.center);
        \draw (v'.center) to[bend right] (iii'.center);
        \end{scope}
    \end{tikzpicture}
 	\end{subfigure}
  \quad  \quad  \quad
	\begin{subfigure}[c]{0.24\textwidth}   
	\begin{tikzpicture}[inner sep=1, outer sep = 0]
        \draw[rounded corners] (-0.25,0.8)--(-0.25,2.3)--(0.75,2.3);
        \begin{scope}[black]
        \node (i) at (0.75,2.3) {};
        \node (ii) at (-0.25,0.8) {};
        \node (iii) at (2.25,1.6) {};
        
        \draw (i.center) to[bend right] (1.85,1.6);
        \draw (1.85,1.6) to (iii.center);
        \draw (iii.center) to[bend left] (0.25,0.75);
        \draw (0.25,0.75) to (ii.center);
        \end{scope}
        
        \begin{scope}[OliveGreen]
        \node[]  (a) at (-0.2,2.25) {$\bullet$};
        \node[] (b) at (0.25,0.75) {$\bullet$};
        \node[] (c) at (1.25,1.55) {$\bullet$};
        \node[] (d) at (1.85,1.6) {$\bullet$};

        \draw (a.center) to[bend right] (c.center);
        \draw (d.center) to[bend left] (c.center);
        \draw (b.center) to[bend left] (c.center);
        \end{scope}

            \draw[rounded corners] (0,0.5)--(0,0)--(1.75,0);
        \begin{scope}[black]
        \node[] (x) at (0,0.5) {};
        \node[] (y) at (2.5,1.5) {};
        \node[] (z) at (1.75,0) {};
        \draw (x.center) to (0.55,0.465);
        \draw (0.55,0.465) to[bend right] (y.center);
        \draw (z.center) to (2.125,0.3);
        \draw (2.125,0.3) to[bend right] (y.center);
        \end{scope}       

         \begin{scope}[OliveGreen]
        \node[] (l) at (0.5,0) {$\bullet$};
        \node[] (m) at (0.55,0.465) {$\bullet$};
        \node[] (n) at (2.125,0.3) {$\bullet$};
        \draw (l.center) to[bend left] (m.center);
        \draw (l.center) to[bend left] (n.center);
        \end{scope}

             \draw[rounded corners] (1.5,2.3)--(3,2.3)--(3.6,2.3);
         \begin{scope}[black]
        \node (j) at (1.5,2.3) {};
        \node (jjj) at (3,1.8) {};
        \node (u) at (3.6,2.3) {};
        
        \draw (j.center) to[bend right =20] (2.25,1.8);
        \draw (2.25,1.8) to[bend right=10] (jjj.center);
        \draw (u.center) to[bend right=20] (3.25,2.075);
        \draw (3.25,2.075) to[bend right=10] (jjj.center);
        \end{scope}
        
        \begin{scope}[OliveGreen]
        \node[] (g) at (2.25,1.8) {$\bullet$};
        \node[] (h) at (3,2.3) {$\bullet$};
        \node[] (k) at (3.25,2.075) {$\bullet$};

        \draw (h.center) to[bend left] (g.center);
        \draw (h.center) to[bend left] (k.center);       
        \end{scope}

            \draw[rounded corners] (4,2)--(4.5,2)--(4.5,0)--(2.25,0);

\begin{scope}[black]
        \node (ooo) at (3,1.5) {};
        \node (oooo) at (2.25,0) {};
        \node (ooooo) at (4,2) {};
        \node (oooooo) at (4.5,0.5) {};
        
        \draw (oooo.center) to[bend right=10] (2.7,0.4);
         \draw (2.7,0.4) to[bend right=20] (ooo.center);
        \draw (ooooo.center) to[bend right=20] (3.25,1.75);
        \draw (3.25,1.75) to[bend right=10] (ooo.center);
        \end{scope}

        \begin{scope}[OliveGreen]
        \node[] (bb) at (2.7,0.4) {$\bullet$};
        \node[] (dd) at (3.25,1.75) {$\bullet$};
        \node[] (ee) at (3.5,0) {$\bullet$};
        \node[] (ff) at (4.5,1.5) {$\bullet$};
           
        \draw (bb.center) to[bend left] (ee.center);
        \draw (dd.center) to[bend left] (ee.center);
        \draw (ff.center) to[bend left] (ee.center);
        \end{scope}

\end{tikzpicture}
 	\end{subfigure}
 	\caption{Dissection $(S,V,D)$ (blue) with a subset $R^{*}$ (red) of its dual $D^{*}$;   and its split (green) corresponding to   \Cref{fig-intro-quivers}. }
    \label{fig-intro-surfaces}
\end{figure}
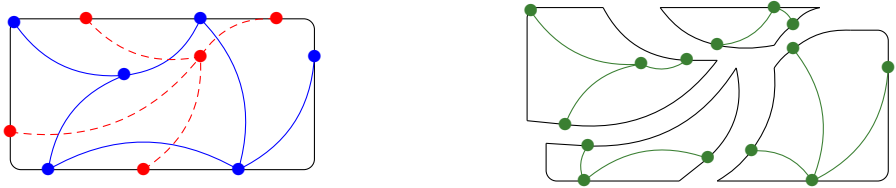



\begin{thm}
\label{thm-main-result-1}
    (see \Cref{thm:SplitVsZembyk}.) 
    Let $\overline{Q}=(Q,Z)$ be a locally gentle pair with surface dissection $(S_{\overline{Q}}, V_{\overline{Q}},D_{\overline{Q}})$.  The quivers associated to the connected components of the split of $(S_{\overline{Q}}, V_{\overline{Q}},D_{\overline{Q}})$ along $R^*$  bijectively  correspond to the connected components of $Q^{\zembyk}(Z)$.
\end{thm}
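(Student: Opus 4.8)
The plan is to make both constructions fully explicit, build a bijection on vertices, extend it to arrows, and check that it respects relations; the upshot will be an isomorphism of quivers between the disjoint union of the component quivers of the split and $Q^{\zembyk}(Z)$, and restricting to connected components gives the stated bijection. Recall first the two constructions side by side. By \cite{Zembyk-Skewed-Gentle-A}, the quiver $Q^{\zembyk}(Z)$ keeps every non-relational vertex of $Q$, replaces every relational vertex $v$ by two vertices $v^{(1)}, v^{(2)}$, and attaches the arrows incident to $v$ to $v^{(1)}$ or $v^{(2)}$ so that an in-arrow $a$ and an out-arrow $b$ at $v$ end up on the same copy exactly when $ba \notin Z$. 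The local datum at $v$ --- the bipartite graph on the in- and out-arrows of $v$ with one edge type for composable pairs and one for pairs lying in $Z$ --- is, by the (locally) gentle condition, a disjoint union of paths and even cycles in which the two edge types alternate; so ``same copy / other copy'' is a consistent $2$-colouring, and both colour classes are non-empty since $v$ is relational. On the other side, the \emph{split} of $(S_{\overline{Q}}, V_{\overline{Q}}, D_{\overline{Q}})$ along $R^{*}$ is obtained by cutting the surface open along every arc of $R^{*}$, producing a possibly disconnected surface with a dissection each of whose connected components is again a dissection, hence carries a locally gentle pair and thus a quiver.

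I would then set up the vertex bijection. Under the dissection dictionary the vertices of $Q$ are the interior arcs of $D_{\overline{Q}}$, the arrows are the angles of the tiles at the marked points, and $R^{*}$ is, by construction, the set of dual arcs whose crossings with $D_{\overline{Q}}$ pass through the arcs of the relational vertices; in particular an arc of $D_{\overline{Q}}$ is a relational vertex iff it meets $R^{*}$. Cutting open along a single arc $\delta^{*} \in R^{*}$ replaces each arc of $D_{\overline{Q}}$ it crosses by two arcs, one on each side of the new boundary, and leaves every other arc alone; performing this for all of $R^{*}$ replaces every relational vertex by two arcs and preserves every non-relational vertex. This gives a bijection between the interior arcs of the split dissection --- equivalently, the vertices of the disjoint union of the component quivers --- and the vertex set of $Q^{\zembyk}(Z)$, sending a relational $v$ to $\{v^{(1)}, v^{(2)}\}$ and a non-relational $v$ to itself.

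Next I would extend the bijection to arrows and confirm that it destroys exactly the relations. Cutting along $R^{*}$ creates no new tile: each tile of $(S_{\overline{Q}}, D_{\overline{Q}})$ lies in a single component of the split, possibly with some of its sides relabelled $d \mapsto d^{(i)}$, so the arrows of the component quivers are precisely those of $Q$ with their sources and targets relabelled. The crux is that this relabelling is Zembyk's rule. A zero-relation $ba \in Z$ through a relational vertex $v$ is realised by two consecutive angles turning around a marked point $p$, with the arc $d$ of $v$ as the shared middle side, and the arc of $R^{*}$ recording this relation crosses $d$ strictly between those two angles; hence after cutting, $a$ is incident to one copy $d^{(1)}$ and $b$ to the other copy $d^{(2)}$, while a composable (nonzero) path $b'a'$ at $v$ has both of its angles on one side of the cut and so stays on a single copy. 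This is exactly the defining condition of $Q^{\zembyk}(Z)$. Since cutting along all of $R^{*}$ undoes every turn that gave rise to a relation, each component of the split has empty relation set, matching the fact that $Q^{\zembyk}(Z)$ carries no relations; so the vertex and arrow bijections assemble into an isomorphism of quivers from the disjoint union of the component quivers of the split onto $Q^{\zembyk}(Z)$, and passing to connected components proves the theorem.

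The step I expect to be the main obstacle is the local analysis underlying the previous paragraph: one must check, over all local configurations at a marked point --- one or two in-arrows and out-arrows at $v$, loops as for $K[x,y;\sigma,\sigma^{-1}]/(xy)$, a vertex lying on one or on two zero-relations, and the configuration in which several arcs of $R^{*}$ share an endpoint, which is exactly where the surface falls apart (see \Cref{fig-intro-surfaces}) --- that the arc of $R^{*}$ separates the angles at $v$ in the same way as Zembyk's $2$-colouring, and that no new turn (hence no new relation) is introduced near the cuts. This rests on the precise definitions of the dual dissection $D_{\overline{Q}}^{*}$ and of $R^{*}$, and is cleanest to organise as an induction on $|R^{*}|$ so that at each step one only has to analyse cutting along a single dual arc.
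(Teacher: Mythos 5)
Your overall strategy is the same as the paper's: reduce to a local analysis at one relational vertex (the paper isolates this as Lemma~\ref{lem:SplitAtOneEquivToZembyk}, showing that splitting at a single dual arc $\tau_v^*$ realises the levee of $(Q,Z)$ at $v$), and then iterate, using the fact that the order of cuts / levees is irrelevant (Lemmas~\ref{lem:GeometricZembykOrderDoesntMatter} and~\ref{lem:ZembykOrderDoesntMatter}). Your proposal to organise the argument as an induction on $|R^*|$, treating one dual arc at a time, is precisely the paper's proof.

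However, your local analysis contains a concrete error in the geometric realisation of a zero-relation. You assert that a zero-relation $ba\in Z$ through a relational vertex $v$ is ``realised by two consecutive angles turning around a marked point $p$, with the arc $d$ of $v$ as the shared middle side, and the arc of $R^{*}$ recording this relation crosses $d$ strictly between those two angles.'' With the paper's dictionary between dissections and quivers (\S\ref{subsec-surfaces-and-locally-gentle-pairs}), this is backwards. A path $ba$ with $t(b)=h(a)=v$ corresponds to two angles adjacent to the arc $\tau_v$: the angle for $a$ at one endpoint of $\tau_v$, and the angle for $b$ at an endpoint of $\tau_v$. The pair forms a zero-relation $ba\in Z$ precisely when the arcs $\tau_{h(b)},\tau_v,\tau_{t(a)}$ bound a common face, which forces the angles for $a$ and $b$ to sit at \emph{opposite} endpoints of $\tau_v$; when the two angles are at the \emph{same} endpoint of $\tau_v$, the path $ba$ is an admissible composite and lies outside $Z$. (You can check this directly on the running example: for $\beta\delta\in Z$ with middle arc $\tau_2$, the angle for $\beta$ is at $c$ and the angle for $\delta$ is at $d$, while for the admissible $\beta\alpha\notin Z$ both angles sit at $c$.) Your picture is also geometrically impossible as stated: the dual arc $\tau_v^*$ has both endpoints in $V^*$, so it crosses $\tau_v$ in its interior and separates the two endpoints of $\tau_v$; it cannot ``cross between two angles'' that meet at a single marked point $p\in V$. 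So the mechanism by which cutting along $\tau_v^*$ sends $a$ and $b$ to different copies of $v$ is that their angles live at opposite ends of $\tau_v$, not that the cut slips between two angles at a common vertex. The conclusion you draw ($a$ to one copy, $b$ to the other, admissible composites preserved) is correct, but the justification should be fixed to match the actual incidence structure of $\tau_v$, $\tau_v^*$, and the angles, as in the figure accompanying the paper's proof of Lemma~\ref{lem:SplitAtOneEquivToZembyk}.
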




The focus of this article is on what we call \textit{semilinear locally gentle algebras}. 
Any such algebra has the form $K_{\boldsymbol{\sigma}}Q/\langle Z\rangle$ where: $(Q,Z)$ is a locally gentle pair; $\boldsymbol{\sigma}$ is a collection  of automorphisms $\sigma_{a}$ of the division ring $K$, one for each arrow $a$ of $Q$; and $K_{\boldsymbol{\sigma}}Q$ is the corresponding \emph{semilinear path algebra}. 
Hence semilinear locally gentle algebras  define a subclass of semilinear clannish algebras from \cite{BenTenCraBoe}.  

We refer to the semilinear locally gentle algebras that are finite-dimensional over $K$ as \textit{semilinear gentle algebras}. 
By  adapting the notion of a nodal algebra to our more general setting, we generalise the argument used in  the proof of \cite[p. 649, Theorem]{Zembyk-Skewed-Gentle-A}.

\begin{thm}
\label{thm-main-result-2}
    (see \Cref{thm-semilinear-fd-gentle-implies-nodal}.) 
     Any finite-dimensional semilinear gentle algebra $\Lambda=K_{\boldsymbol{\sigma}}Q/\langle Z\rangle  $ is nodal, connected with $\Gamma=K_{\boldsymbol{\sigma}}Q^{\zembyk}(Z)$. 
\end{thm}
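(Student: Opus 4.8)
The plan is to follow the scheme of Zembyk's proof of \cite[p.~649, Theorem]{Zembyk-Skewed-Gentle-A}, producing the hereditary overalgebra explicitly as $\Gamma=K_{\boldsymbol{\sigma}}Q^{\zembyk}(Z)$ and verifying, in the generalised sense introduced earlier, the defining properties of a nodal algebra: that $\Gamma$ is hereditary, that there is an embedding $\iota\colon\Lambda\hookrightarrow\Gamma$ with $\rad\Lambda=\rad\Gamma$, and that $\iota$ has the local shape of a node at each relational vertex. The finite-dimensionality of $\Lambda$ gives at once the standing hypotheses (semiperfect, Noetherian). The automorphisms $\boldsymbol{\sigma}$ will be transported along the arrows without change, and the main point of the argument — and the place I expect the real work to be — is checking that they interfere with none of these conditions.

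\emph{The embedding.} Recall that $Q^{\zembyk}(Z)$ has the same arrows as $Q$, each retaining its automorphism $\sigma_a$; that every non-relational vertex is kept; and that each relational vertex $v$, i.e.\ one with $v=h(a)=t(b)$ for some $ba\in Z$, is replaced by two vertices $v',v''$, the incident arrows being re-routed so that the two arrows of each zero-relation through $v$ acquire distinct endpoints. I would define $\iota$ to be the identity on arrows, the identity on the idempotent at each non-relational vertex, and $e_v\mapsto e_{v'}+e_{v''}$ at each relational vertex. Then each relation $ba\in Z$ is sent to $0$, simply because $b$ and $a$ are no longer composable in $\Gamma$; this step sees only which idempotents an arrow sits between, so it is insensitive to $\boldsymbol{\sigma}$, and $\iota$ is a well-defined ring homomorphism fixing the coefficient copies of $K$. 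For injectivity one uses that a $K$-basis of $\Lambda$ is given by the $Z$-admissible paths of $Q$, that a $K$-basis of $\Gamma$ is given by all paths of $Q^{\zembyk}(Z)$, and that forgetting vertex-decorations sets up a bijection between the paths of $Q^{\zembyk}(Z)$ of length $\ge 1$ and the $Z$-admissible paths of $Q$ of length $\ge 1$ — the Zembyk construction being designed precisely so that a path survives in $Q^{\zembyk}(Z)$ if and only if it crosses no zero-relation. Hence $\iota$ is injective, $\rad\Gamma$ (the arrow ideal of $\Gamma$) is contained in $\iota(\Lambda)$ and coincides with $\iota(\rad\Lambda)$, so after identifying $\Lambda$ with $\iota(\Lambda)$ we have $\rad\Lambda=\rad\Gamma$.

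\emph{Hereditarity and the nodal local picture.} Since $\Lambda$ is finite-dimensional, every oriented cycle of $Q$ contains a subpath in $Z$, so the Zembyk cut severs every cycle and $Q^{\zembyk}(Z)$ is a finite acyclic quiver; the semilinear path algebra of such a quiver is hereditary, via the standard length-one projective resolution of each simple with the relevant terms twisted by the $\sigma_a$ (here one invokes the semilinear path-algebra formalism of \cite{BenTenCraBoe}). For the node condition, note that because $Q^{\zembyk}(Z)$ is acyclic and each arrow of $Q$ occurs exactly once in it with fixed endpoints, $\iota$ restricts on each corner $e_v\Lambda e_v$ to an isomorphism onto $e_v\Gamma e_v$ when $v$ is non-relational, and to the diagonal embedding $K=Ke_v\hookrightarrow Ke_{v'}\times Ke_{v''}=e_v\Gamma e_v$ when $v$ is relational — the defining local model of a node — with the splitting into \emph{two} copies being forced by the gentle conditions on $(Q,Z)$ (a short case analysis on the at most two incoming and two outgoing arrows at $v$). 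Equivalently, $\Gamma/\iota(\Lambda)$ is a semisimple $\Lambda$-bimodule supported on the relational simples, each occurring once, so the conductor of the embedding is as required. Putting these together, and reading off the components of $Q^{\zembyk}(Z)$ via \Cref{thm-main-result-1}, gives that $\Lambda$ is nodal, connected with $\Gamma$. The main obstacle I foresee is this last verification of the generalised nodal axioms: one must confirm both that the ``two copies'' analysis holds in every configuration permitted by the gentle conditions and that the coefficient automorphisms $\sigma_a$ do not affect the conductor — which I expect to go through because all of the nodal data (the idempotents, the corner algebras $e_v\Gamma e_v$, the conductor) is governed by the vertices and the underlying quiver, on which the $\sigma_a$ act trivially, while the arrows are carried isomorphically by $\iota$.
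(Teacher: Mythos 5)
Your plan matches the paper's in outline: construct $\Gamma=K_{\boldsymbol{\sigma}'}Q^{\zembyk}(Z)$, define the embedding by $e_v\mapsto e_{v'}$ at non-relational vertices and $e_v\mapsto e_{v(\sharp)}+e_{v(\flat)}$ at relational ones, send arrows to arrows, check injectivity via the path bijection, get acyclicity from finite-dimensionality, deduce hereditarity, and then verify the length-$\leq 2$ condition. The embedding, injectivity, $\rad\Lambda=\rad\Gamma$ and hereditarity steps are essentially right and are what the paper does (via \Cref{lem:condition-1-of-nodality}, \Cref{lem:acyclic-quiver}, \Cref{prop:acylic-semilinear-implies-hereditary-ring}, \Cref{lem:primitive-idempotents-in-f-d-semilinear-quotients}).

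The gap is in your verification of the nodal condition (3). You claim that for a relational vertex $v$, $\iota$ restricts on the corner to a diagonal embedding $K\hookrightarrow Ke_{v'}\times Ke_{v''}=e_v\Gamma e_v$. This is false as stated: the corner $(e_{v(\sharp)}+e_{v(\flat)})\Gamma(e_{v(\sharp)}+e_{v(\flat)})$ is not just the span of the two idempotents, since $Q^{\zembyk}(Z)$ can contain nontrivial paths between $v(\sharp)$ and $v(\flat)$ even when it is acyclic. The smallest counterexample is a single loop $a$ at a vertex $v$ with $Z=\{a^2\}$: here $Q^{\zembyk}(Z)$ is $v(\sharp)\xrightarrow{a'}v(\flat)$, and $e_v\Gamma e_v=\Gamma$ is $3$-dimensional. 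Your fallback phrasing — that $\Gamma/\iota(\Lambda)$ is a semisimple $\Lambda$-bimodule supported once on each relational simple, ``so the conductor of the embedding is as required'' — points in a sensible direction, but you do not actually derive from it the statement that $\Gamma\otimes_\Lambda U$ has length $\leq 2$ for every simple $U$, which is what \Cref{def:semilinear-nodal} requires; and it appeals to a ``conductor'' formulation that the paper's definition does not use. The paper avoids the corner-algebra detour entirely: it passes to the semisimple quotients $\overline{\Lambda}=\Lambda/\rad\Lambda$ and $\overline{\Gamma}=\Gamma/\rad\Gamma$ (using \cite[p.~469, Lemma~1]{Zembyk-structure-of-finite-dim-nodal-algs} to reduce the length computation), identifies a $K$-basis of $\overline{\Gamma}\otimes_{\overline{\Lambda}}U$ as the tensors $(e_w+\rad\Gamma)\otimes(e_v+I)$, and computes directly that at most two of these are nonzero. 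You should replace the corner-algebra step with that computation (or with a careful justification of the equivalence you assert).
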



Relying on a classification from the second author and Crawley-Boevey in \cite{BenTenCraBoe}, our model gives a geometric interpretation of the finite-dimensional  modules of the semilinear locally gentle algebra. Namely, we first connect two pieces of literature to provide a geometric model for indecomposable modules for a locally gentle algebra. In particular, in \cite{Baur-Coelho-Simoes-geometric-model-module-cat-gentle}, the authors give a geometric model for the module category for gentle algebras. Using the  pair of surface dissections from \cite{Palu-Pilaud-Plamondon-non-kissing-non-crossing}, we are able to take the proof ideas in \cite{Baur-Coelho-Simoes-geometric-model-module-cat-gentle} to give a geometric realization of indecomposable modules for locally gentle algebras. 

\begin{thm}
\label{thm-main-result-3}
    (see \Cref{cor:BijArcsModules}.)
Given a semilinear locally gentle algebra $\Lambda=K_{\boldsymbol{\sigma}}Q/\langle Z\rangle  $, there exists a surface with a pair of dual dissections such that there is a correspondence between isomorphism classes of indecomposable finite-dimensional modules $\Lambda$ and equivalence classes of permissible arcs or closed curves.
\end{thm}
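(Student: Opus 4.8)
The plan is to assemble the correspondence from two independent ingredients: the combinatorial classification of indecomposable finite-dimensional modules over a semilinear locally gentle algebra coming from the theory of semilinear clannish algebras in \cite{BenTenCraBoe}, and the geometric bookkeeping provided by the pair of dual dissections $(S,D)$, $(S,D^{*})$ attached to the locally gentle pair $(Q,Z)$ as in \cite{Palu-Pilaud-Plamondon-non-kissing-non-crossing}. First I would recall, for the underlying (non-semilinear) locally gentle algebra $KQ/\langle Z\rangle$, that the indecomposable modules split into \emph{string modules} and \emph{band modules}, following Crawley-Boevey's classification of finitely generated modules over infinite-dimensional string algebras \cite{craboe-inf-dimstring}; then I would transport the geometric model of Baur--Coelho-Sim\~oes \cite{Baur-Coelho-Simoes-geometric-model-module-cat-gentle}, which in the finite-dimensional gentle case matches string modules with permissible arcs and band modules with closed curves, to the locally gentle setting by working on the surface dissection rather than the tiling. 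This is the content of the geometric model for the underlying algebra, and I expect it to follow by a careful but routine translation of the dictionary in \cite{Baur-Coelho-Simoes-geometric-model-module-cat-gentle} into the language of \cite{Palu-Pilaud-Plamondon-non-kissing-non-crossing}, using that permissible arcs (resp.\ closed curves) in $D^{*}$ are in bijection with reduced walks (resp.\ cyclic reduced walks) in $Q$ avoiding $Z$ and the inverses of $Z$.

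Next I would account for the semilinear data $\boldsymbol{\sigma}$. The key point from \cite{BenTenCraBoe} is that over $K_{\boldsymbol{\sigma}}Q/\langle Z\rangle$ the combinatorics of strings and bands is unchanged, but the associated module carries a twisted linear structure: along a string (resp.\ band) one composes the automorphisms $\sigma_{a}$ of $K$ attached to the arrows traversed, and for band modules the relevant datum is the Frobenius--twisted conjugacy/similarity class of the accumulated automorphism together with an indecomposable module over a suitable twisted polynomial ring (or, when the cyclic word has trivial total twist, the usual Jordan-block data over $K[t,t^{-1}]$). So the isomorphism classes of indecomposable finite-dimensional $\Lambda$-modules are parametrised by: (i) a permissible arc, with no extra discrete parameter, for string modules; and (ii) a closed curve together with the appropriate semilinear Jordan datum, for band modules. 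I would then define the equivalence relation on arcs/closed curves exactly so that it records this: two arcs are equivalent iff they are homotopic rel endpoints in the usual sense, and two decorated closed curves are equivalent iff the underlying curves are freely homotopic \emph{and} the attached semilinear data agree, matching the equivalence already implicit in the classification of \cite{BenTenCraBoe}.

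With both ingredients in hand, the proof of the bijection is a matter of composing the two dictionaries: an indecomposable module $\mapsto$ its string or band (via \cite{BenTenCraBoe}) $\mapsto$ the corresponding permissible arc or closed curve on $(S,D^{*})$ (via the locally gentle geometric model), and checking that this assignment is well-defined on isomorphism classes, injective, and surjective. Well-definedness and injectivity reduce to the statement that two strings/bands give isomorphic modules iff they are related by the standard equivalences (reversal of a string, cyclic rotation and inversion of a band, plus the semilinear data), which is precisely how the equivalence classes of arcs/curves were set up; surjectivity is the statement that every permissible arc or closed curve arises from a reduced walk, which is built into the surface dissection formalism of \cite{Palu-Pilaud-Plamondon-non-kissing-non-crossing}. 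I expect the main obstacle to be the band case: one must verify that the geometric notion of equivalence of closed curves, enriched with the winding/twisting data needed to carry the semilinear Jordan parameters, matches exactly the module-theoretic equivalence from \cite{BenTenCraBoe}, and in particular that the total automorphism $\sigma_{w}$ read off a cyclic walk is a genuine invariant of the free homotopy class of the curve (independent of basepoint up to conjugacy in $\mathrm{Aut}(K)$). Once that compatibility is pinned down, the remaining verifications are bookkeeping.
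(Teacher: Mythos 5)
Your approach is essentially the paper's: \Cref{cor:BijArcsModules} is proved by combining the string--arc bijection of Palu--Pilaud--Plamondon (\Cref{prop:bijectionPPP}), the new band--closed-curve bijection (\Cref{prop:BijArcsStrings}, obtained by adapting Baur--Coelho-Simoes exactly as you suggest), and the module classification \Cref{BTCB-main-theorem} from \cite{BenTenCraBoe}. Your extra observation that a closed curve only determines a \emph{family} of band modules, and would need to be decorated with a similarity/Jordan datum over the twisted Laurent ring $\Lambda(C)$ to give an honest bijection, is correct; the paper absorbs this into the deliberately loose word \emph{correspondence} rather than recording the datum on the curve.
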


The finite-dimensional indecomposable $\Lambda$-modules from \Cref{thm-main-result-3} have the form $
M(C,V)=M(C)\otimes_{\Lambda(C)} V$ where: $C$ is a \emph{string} or a \emph{band}, a piece of combinatorial data chosen with respect to $(Q,Z)$;  $\Lambda(C)$ is a $K$-ring, either $K$ or of the form $K[t,t^{-1};\pi]$; $M(C)$ is an explicitly defined  $\Lambda$-$\Lambda(C)$ bimodule; and $V$ is an arbitrary finite-dimensional indecomposable $\Lambda(C)$-module. 
In \Cref{thm-main-result-4} we use the phrase \emph{semilinear structure of} $M(C,V)$ to refer to the right action of $\Lambda(C)$ on $M(C)$.

For the locally gentle pair $(Q,Z)$ the collection of components $(S_i,V_{D_i},D_i)$ found by the split of the  surface associated to $(Q,Z)$ have an interesting geometric feature: we can associate each arrow from the original quiver uniquely with a face in the union of the faces of these components. 
Combining this bijection with \Cref{thm-main-result-3}, we note how  the automorphism face labellings capture the semilinearity of representations of quivers that correspond to modules over semilinear locally gentle algebras. 

\begin{thm}
\label{thm-main-result-4}
    (see \Cref{thm:PiMatchesArcSemilinearity}.)
Given a finite-dimensional module $M$ over a semilinear locally gentle algebra $\Lambda=K_{\boldsymbol{\sigma}}Q/\langle Z\rangle $, the semilinear structure of $M$ can be completely described in terms of a \emph{labeled tiling} of the surface associated to $\Lambda$. 
\end{thm}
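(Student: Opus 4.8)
The plan is to combine the three preceding main results into a single explicit dictionary between the semilinear structure of a module $M$ and a face-labelling of the tiled surface. First I would invoke \Cref{thm-main-result-3} to write $M\cong M(C,V)$ for a string or band $C$ and a finite-dimensional indecomposable $\Lambda(C)$-module $V$, so that — by the remark following \Cref{thm-main-result-3} — the entire semilinear structure of $M$ is encoded in the right action of $\Lambda(C)$ on the bimodule $M(C)$; when $C$ is a band this is the datum of the automorphism $\pi$ appearing in $\Lambda(C)=K[t,t^{-1};\pi]$, and when $C$ is a string there is nothing to record. Thus it suffices to show that $\pi$ (equivalently, the composite of the $\sigma_a$'s read along $C$) is recovered from the labelled tiling.

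Next I would set up the \emph{labeled tiling}: by \Cref{thm-main-result-1} the split of $(S_{\overline Q},V_{\overline Q},D_{\overline Q})$ along $R^*$ produces components $(S_i,V_{D_i},D_i)$, and — as noted in the paragraph before the statement — there is a bijection between the arrows $a$ of $Q$ and the faces in the disjoint union of the faces of these components. I would label the face corresponding to arrow $a$ by the automorphism $\sigma_a\in\Aut(K)$; glueing the components back along the seams $R^*$ (cf. the ``surface glued back together'' construction from the abstract and \Cref{thm-main-result-2}) yields the surface associated to $\Lambda$ together with this automorphism face-labelling, which is the \emph{labeled tiling} referenced in the statement. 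The string or band $C$ corresponds under \Cref{thm-main-result-3} to a permissible arc or closed curve $\gamma$ on this surface, and $\gamma$ passes through a sequence of faces; I would verify — by unwinding the definition of $M(C)$ as a $\Lambda$-$\Lambda(C)$ bimodule and the way each arrow letter in $C$ contributes its automorphism $\sigma_a$ (or $\sigma_a^{-1}$, according to the orientation of the letter) to the twist in the right $\Lambda(C)$-action — that the automorphism $\pi$ is precisely the ordered product of the face labels encountered as $\gamma$ traverses the faces, with exponents $\pm1$ dictated by whether $\gamma$ crosses the face ``with'' or ``against'' the orientation of the associated arrow. For a non-closed permissible arc the curve is not cyclic, the right module $\Lambda(C)$ is just $K$, and one checks the product is vacuous, matching the string case.

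The main obstacle I anticipate is bookkeeping the orientations consistently: one must pin down, once and for all, a convention relating (i) the direction in which an arrow $a$ appears as a letter of the string/band $C$, (ii) the side of the corresponding face that $\gamma$ enters and exits, and (iii) whether $\sigma_a$ or $\sigma_a^{-1}$ enters the semilinear twist — and then show this convention is coherent across the seams $R^*$ where pieces were glued, since a relational vertex $h(a)=t(b)$ with $ba\in Z$ is exactly where two faces meet along a seam and where the corresponding letters of $C$ cannot be composed. I expect that the split construction of \Cref{thm-main-result-1} has been arranged precisely so that each face lies in a single component with an unambiguous local orientation, which makes the per-face contribution well-defined; the remaining work is to check that reassembling along the seams does not introduce any sign or inversion discrepancy, for which I would argue face-by-face along $\gamma$ using that consecutive letters of a string or band, by definition of a locally gentle pair, avoid the forbidden length-two paths in $Z$ and hence never straddle a seam in an incompatible way. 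Once the per-face formula for $\pi$ is established and shown to be independent of the representative of the equivalence class of $\gamma$ (rotation of a band corresponds to cyclic permutation of the face sequence, which does not change the product up to conjugation, matching the isomorphism class of $M(C,V)$), the theorem follows.
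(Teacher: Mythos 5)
Your overall strategy — identify $M$ with $M(C,V)$ via \Cref{thm-main-result-3}, transfer $C$ to a permissible arc or closed curve $\gamma$, and then read the automorphisms back off the face labels $\boldsymbol{\ell}$ — is the same as the paper's, and your orientation convention (inverting a face label according to which side of $\gamma$ the shared endpoint lies on) is exactly what Definitions \ref{defn-semilinearity-of-permissible-arcs} and \ref{defn-semilinearity-of-closed-curves} encode via $p_i$. The minor detour through splitting and re-gluing is unnecessary: the paper builds the labeled tiling directly on $(S_{\overline Q}, D_{\overline Q}, R^*_{\overline Q})$ via $\mathcal L$ and $\boldsymbol\ell$ (\Cref{def:SemilinearTiling}, \Cref{cor:ArrowsToFaces}), without physically cutting the surface, but these are two views of the same bijection.

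There is, however, a genuine gap: you assert that ``when $C$ is a string there is nothing to record.'' This is false, and it causes you to silently drop part (1) of the actual result \Cref{thm:PiMatchesArcSemilinearity}. The right action of $\Lambda(C)=K$ on $M(C)$ for a string is \emph{not} the standard one; by \Cref{defn:right-module-structure} it is twisted, $b_i\lambda=\pi_i(\lambda)b_i$, so the semilinear structure of a string module consists of the full sequence of automorphisms $(\pi_i)_{i\in V(C)}$, not nothing. What the paper proves is precisely that $\pi_i=\sigma_{\gamma,i}$ for \emph{every} $i\in V(C)$ — and the band statement $\pi_C^{-1}=\sigma_{\gamma,n}$ is then a special consequence at $i=n$. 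Your proposal therefore only establishes the band half; to close the gap you need to run the same inductive face-by-face comparison for arcs as well, recovering each $\pi_i$ rather than just a single loop product. (You may be implicitly reasoning that the $\pi_i$ do not change the isomorphism type of $M(C)\otimes_K K$ as a left $\Lambda$-module — which is true — but ``semilinear structure of $M(C,V)$'' is by the paper's own definition the right $\Lambda(C)$-action on the bimodule $M(C)$, and that data is nontrivial for strings.)
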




\subsection{Structure of the paper}

Our paper is structured as follows: In \S\ref{sec:background} we give background on locally gentle quivers, as well as Zembyk's algorithm for decomposition of quivers. In \S\ref{sec:SurfaceDissection} we define the underlying surfaces corresponding to a gentle quiver, as well as related geometric and topological notions.  In \S\ref{sec:geozembyk}, we provide a geometric description of Zembyk's algorithm. In \S\ref{sec-semilinearity} we recall nodal algebras, show that semilinear gentle algebras are nodal, and give background on semilinear locally gentle algebras needed for the geometric model for semilinear representations in \S\ref{sec-geometric-model}.

\section{Locally Gentle Bound Quivers and Zembyk's Algorithm}\label{sec:background}

In this section, we review the notion of bound quivers and describe which sets of relations are so-called locally gentle. We then discuss work of Zembyk \cite{Zembyk-Skewed-Gentle-A} that describes a way to cut up certain bound quivers into quivers without relations. 
We begin by setting up some notation, conventions and terminology, most of which is standard.

\begin{setup}
    Let $Q$ be a quiver with a set $Q_{0}$ of vertices, a set $Q_{1}$ of arrows, and functions $h,t\colon Q_{1}\to Q_{0}$ assigning to each arrow $  a $ its head $h(  a )$ and tail $t(  a )$ i.e. $t(  a ) \xrightarrow{  a } h(  a )$. 
    A \emph{path} in $Q$ means either a \emph{trivial path at} $v$, denoted $e_{v}$ and defined for each $v\in Q_{0}$, or a \emph{non}-\emph{trivial path} $ a_{1}\dots  a_{n}$ \emph{of length} $n\geq 1$, defined for arrows $ a_{i}$ where  $t( a_{i})=h( a_{i+1})$ for $i<n$. 
    A \emph{subpath} of a non-trivial path $p= a_{1}\dots  a_{n}$ is a path $q$ where: 
\begin{enumerate}
    \item $q=e_{v}$ is trivial where $v$ is the head or tail of some $ a_{i}$; 
    \item or $q= a_{i}\dots  a_{i+m-1}$ is non-trivial of length $m\leq n$ for some $i\geq 1$. 
\end{enumerate} 
Let $Z$ be a fixed set of \emph{quadratic zero}-\emph{relations} (so paths of  length $2$) in $Q$. 
By an \emph{admissible path} (with respect to $(Q,Z)$) we mean one with no element of $Z$ arising as a subpath. By an \emph{inadmissible path} we mean one with a subpath in $Z$. 
\end{setup}

\begin{defn}
\label{def-locally-gentle-pair}
\cite{Bessenrodt-Holm-weighted} 
We say that $(Q,Z)$ is \emph{locally gentle} given (i) and (ii) below hold. 
\begin{enumerate}[(i)]
    \item Any vertex of $Q$ is the head (respectively, tail) of at most two arrows. 
    \item Any arrow $b$ gives rise to at most one admissible (respectively, inadmissible) length $2$ path of the form $cb$, and at most one such path of the form $ba$.
\end{enumerate}
We say $(Q,Z)$ is \emph{gentle} if it is locally gentle and there are finitely many admissible paths. 
\end{defn}

\begin{rem}\label{rem:gentlevslocallygentle}
This definition of locally gentle pair does not forbid admissible paths of infinite length. 
This is the difference between gentle pairs and locally gentle pairs. 
\end{rem}

\begin{example}\label{example:gentlevslocallygentle}
 Consider the following quiver $Q$ and the relations $Z_1 = \{\beta^2,\nu\alpha\}$, $Z_2 = \{\beta \alpha, \nu \beta\}$. The pair $(Q, Z_1)$ is gentle whereas the pair $(Q, Z_2)$ is locally gentle.
 
\begin{center}
\begin{tikzpicture}[xscale=2]
	\node (1) at (1,0) {1};
	\node (2) at (2,0) {2};
	\node (3) at (3,0) {3};
	\draw[->] (1) to node[above]{$\alpha$}(2);
	\draw[->] (2) .. controls (1.7,1.5) and (2.3,1.5) .. node[pos=.75, right]{$\beta$}(2);
	\draw[->] (2) to node[above]{$\nu$}(3);
\end{tikzpicture}
\end{center}

\end{example}

\subsection{Zembyk's Construction} \label{subsec:zembykalgorithm}

In \Cref{defn-relational} we follow a construction of Zembyk \cite{Zembyk-Skewed-Gentle-A}, and at the same time we choose notation to be used later in the article. 

\begin{setup}
    Throughout \S\ref{subsec:zembykalgorithm} we assume $Q$ is a quiver and $Z$ is a set of quadratic zero-relations such that $(Q,Z)$ is locally gentle. 
\end{setup}

\begin{defn}\label{defn-relational}
Let $v$ be a vertex, and for the purposes of stating \Cref{defn-relational},  let
\[
\begin{array}{cc}
    X=(x\in Q_{1}\colon t(x)=v),
    &
    Y=(y\in Q_{1}\colon h(y)=v).
\end{array}
\]
Note that $X$ and $Y$ are possibly empty sequences of length at most $2$.  We say that $v$ is:
\begin{itemize}
    \item  a \emph{stream} if $
X=(b)$ and $Y=(a)$ where $Z\ni ba$. 
\item a \emph{tributary} if $
X=(b)$ and $Y=(a,c)$ where $bc\notin Z\ni ba $; 
\item a \emph{distributary} if $
X=(b,d)$ and $Y=(a)$ where $da\notin Z\ni ba $; and 
\item a \emph{quadtributary} if $
X=(b,d)$ and $Y=(a,c)$ where $bc,da\notin Z\ni ba,dc $. 
\end{itemize}
We say that $v$ is \emph{relational} if it is a stream, a tributary, a distributary or a quadbutary.
Note that  $v$ is relational if and only if there exists a length-$2$ path $ba\in Z$ such that $t(b)=v=h(a)$. 
The different cases for a relational vertex are drawn below. 
\[
\begin{array}{cccc}
\textbf{stream}

&

\textbf{tributary}

&

\textbf{distributary} 

&

\textbf{quadbutary}

\\

\begin{tikzcd}
{} & v\ar[l, "b"name=bet] & {}\ar[l,  "a"name=alp]
\ar[-, shorten <= 3pt, shorten >= 3pt, from=alp, to=bet, bend right = 80, dashed]
\end{tikzcd}

&
\begin{tikzcd}
{} & {} & {}\ar[dl, "a"name=alp]\\
{} & v\ar[l, "b"name=bet] & {}\\
{} & {} & {}\ar[ul,  "c"name=alpp, swap]
\ar[-, shorten <= 3pt, shorten >= 3pt, from=alp, to=bet, bend right = 60, dashed]
\end{tikzcd} 

&

\begin{tikzcd}
{} & {} & {}\\
{} & v\ar[ul, "b"name=bet]\ar[dl, "d"name=betp, swap] & {}\ar[l, "a"name=alp]\\
{} & {} & {}
\ar[-, shorten <= 3pt, shorten >= 3pt, from=alp, to=bet, bend right = 60, dashed, swap]
\end{tikzcd}

& 

\begin{tikzcd}
{} & {} & {}\ar[dl, "a"name=alp]\\
{} & v\ar[ul, "b"name=bet]\ar[dl, "d"name=betp, swap] & {}\\
{} & {} & {}\ar[ul,  "c"name=alpp, swap]
\ar[-, shorten <= 3pt, shorten >= 3pt, from=alp, to=bet, bend right = 60, dashed]
\ar[-, shorten <= 3pt, shorten >= 3pt, from=alpp, to=betp, bend left = 60, dashed]
\end{tikzcd} 

\\

 Z\ni ba

&

bc\notin Z\ni ba

&

da\notin Z\ni ba

& 

bc,da\notin Z \ni ba,dc

\end{array}
\]
\end{defn}

Note that $a\neq c$ when $v$ is a tributary or a quadbutary, and $b\neq d$ when $v$ is a distributary or a quadbutary. 
Consider that these diagrams depict precisely the arrows in $Q$ which are incident at $v$. 
Of all the heads and tails of these arrows, note that we have only labeled $v$. 
For the remaining vertices, not only may they coincide, but it is possible that $a=b$ or $a=d$ (but not both).  

 \begin{example}
 \label{running-example-quiver}
      Let $Q$ be the quiver 
\begin{center}
\begin{tikzpicture}
	\node (1) at (-.732,1) {1};
	\node (2) at (1,2) {2};
	\node (3) at (1,0) {3};
	\node (4) at (3,0) {4};
	\node (5) at (3,2) {5};
	\node (6) at (5,2) {6};
	
	\draw[->] (1) to node[above]{$\alpha$} (2);
	\draw[->] (2) to node[left]{$\beta$} (3);
	\draw[->] (3) to node[below]{$\nu$} (1);
	\draw[->] (5) to node[above]{$\delta$} (2);
	\draw[->] (4) to node[left]{$\epsilon$} (5);
	\draw[->] (3) to node[above]{$\zeta$} (4);
	\draw[->] (5) to node[above]{$\eta$} (6);
\end{tikzpicture}
\end{center}
and let $Z=\{\beta\delta, \delta\epsilon, \epsilon\zeta\, \zeta\beta\}$. 
Here the vertices $1$ and $6$ are not relational, $2$ is a tributary, $3$ is a distributary, $4$ is a stream, and $5$ is another distributary. There are no quadbutaries. 
 \end{example}

\begin{defn}\label{defn-levee} \cite{Zembyk-Skewed-Gentle-A}
The \emph{levee} of $(Q,Z)$ at a relational vertex $v$ is a new pair $(Q^{v},Z^{v})$ that  `splits $Q$ at $v$' and is formally defined as follows. 
Let 
\[
\begin{array}{cc}
Q^{v}_{0}=\{u^{v}\mid u\in Q_{0}\setminus \{v\}\}\cup \{v(\sharp) \}\cup\{v(\flat) \},
&
Q^{v}_{1}=\{a^{v}\mid a\in Q_{1}\}.
\end{array}
\]
So vertices are found by relabelling the vertices in $Q$ other than $v$, and then adding a pair of new  and distinct vertices, denoted $v(\sharp) $ and $v(\flat) $, which replace $v$.  
Now let
\[
\begin{array}{ccc}
     h^{v}(a^{v})=h(a)^{v},
     & 
     t^{v}(a^{v})=t(a)^{v},
     & 
     (a\in Q_{1}, \,h(a)\neq v\neq t(a))
\end{array}
\]
 and otherwise, in the notation of \Cref{defn-relational}, given it makes sense, we let 
 \[
 \begin{array}{cc}
      t^{v}(b^{v})=v(\sharp) =h^{v}(c^{v}),
      & 
      t^{v}(d^{v})=v(\flat) =h^{v}(a^{v}).
 \end{array}
 \] 
    Hence the head and tail functions are essentially unchanged on arrows not incident at $v$, and otherwise, they are given according to the cases  below.
\[
\begin{array}{cccc}
\textbf{stream levee}

& 

\textbf{tributary levee}

&

\textbf{distributary levee}

&

\textbf{quadbutary levee}

\\

\begin{tikzcd}[column sep = 1.9em]
{} & v(\sharp)\ar[l, swap, "b^{v}"name=bet] &\\
& v(\flat)  & {}\ar[l, swap,  "a^{v}"name=alp]
\end{tikzcd}

&

\begin{tikzcd}[column sep = 1.9em]
{} & v(\sharp) \ar[l, swap, "b^{v}"name=bet] & {}\ar[l, swap,  "c^{v}"name=alpp]\\
{} & v(\flat)  & {}\ar[l, swap,  "a^{v}"name=alp]
\end{tikzcd}

&

\begin{tikzcd}[column sep = 1.9em]
{} & v(\sharp) \ar[l, swap, "b^{v}"name=bet] & {}\\
{} & v(\flat) \ar[l, swap, "d^{v}"name=betp] & {}\ar[l, swap,  "a^{v}"name=alp]
\end{tikzcd} 

& 

\begin{tikzcd}[column sep = 1.9em]
{} & v(\sharp) \ar[l, swap, "b^{v}"name=bet] & {}\ar[l, swap,  "c^{v}"name=alpp]\\
{} & v(\flat) \ar[l, swap, "d^{v}"name=betp] & {}\ar[l, swap,  "a^{v}"name=alp]
\end{tikzcd} 
\end{array}
\]
Note there is a choice of which of the two new vertices replacing $v$ is labeled $v(\sharp)$, and which is labeled $v(\flat)$, and so the new quiver $Q^{v}$ is defined only up to quiver isomorphism. 

  With the quiver $Q^{v}$ defined we now let 
  $Z^{v}=\{m^{v}n^{v}\mid mn\in Z,\,v\neq t(m),\, v\neq h(n)\}$. 
Thus $Z^v$ consists of the paths in $Z$ apart from those of the form $ba,dc$ from \Cref{defn-relational}. 
\end{defn}

\begin{example}
\label{example-ambiguous-notation-levee}
    Consider the gentle pair $(Q,Z_1)$ and the locally gentle pair  $(Q,Z_2)$ from  \Cref{example:gentlevslocallygentle}. 
    Let $v=2$. 
    Note that $v$ is relational with respect to $Z_{1}$ and $Z_{2}$, but by means of different relations. 
    Hence there is an abuse of notation, explained by the following examples. 
    The levee $(Q^{v},Z^{v}_{1})$ of $(Q,Z_1)$ at $v$ is given by $Z_{1}^{v}=\emptyset$ and 
    \begin{center}
        \begin{tikzpicture}
            \node (1) at (0,0) {$1$};
            \node (2) at (2,0) {$2(\sharp)$};
            \node (2') at (4,0) {$2(\flat)$};
            \node (3) at (6,0) {$3$};
            \draw[->] (1) -- node[above]{$\alpha$} (2);
            \draw[->] (2) --node[above]{$\beta$}(2');
            \draw[->] (2') -- node[above]{$\nu$} (3);
        \end{tikzpicture}
    \end{center}
while the levee $(Q^{v},Z^{v}_{2})$ is defined by $Z_{2}^{v}=\emptyset$ and the quiver 
    \begin{center}
        \begin{tikzpicture}
            \node (1) at (0,0) {$1$};
            \node (2) at (2,0) {$2(\sharp)$};
            \node (3) at (4,0) {$3$};
            \node (2') at (6,0) {$2(\flat)$};
            \draw[->] (1) -- node[above]{$\alpha$} (2);
            \draw[->] (2) --node[above]{$\nu$}(3);
            \draw[->] (2') .. controls (7.5,0.5) and (7.5, -0.5) .. node[pos=.5, right]{$\beta$} (2');
        \end{tikzpicture}
    \end{center}
Hence the symbol $Q^{v}$ conceals the way in which the vertex $v$ is relational. 
\end{example}

We first give with some immediate consequences of this definition. 

\begin{lem}\label{lem:ZembykBasics}
Let $v$ be a relational vertex in $Q$. 
The following statements hold. 
\begin{enumerate}[(i)]
    \item The pair $(Q^{v},Z^{v})$ is locally gentle.
    \item For the pair $(Q^{v},Z^{v})$ the new vertices $v(\sharp) ,v(\flat)\in Q_{0}^{v} $ are not relational.
\end{enumerate}
\end{lem}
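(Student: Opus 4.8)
The claim has two parts, both of which should follow by a careful but routine case analysis using the structure of the levee from \Cref{defn-levee}. For part (i), I would verify conditions (i) and (ii) of \Cref{def-locally-gentle-pair} directly. For condition (i) of that definition, observe that the head/tail functions $h^v,t^v$ agree with $h,t$ on all arrows not incident at $v$, so no vertex $u^v$ with $u\neq v$ can acquire a third in-arrow or out-arrow. The only new vertices are $v(\sharp)$ and $v(\flat)$; inspecting the four pictures of the levee, $v(\sharp)$ is the tail of exactly one arrow (namely $b^v$) and the head of at most one arrow ($c^v$, when $v$ is a tributary or quadbutary), while $v(\flat)$ is the head of exactly one arrow ($a^v$) and the tail of at most one arrow ($d^v$, when $v$ is a distributary or quadbutary). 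So condition (i) holds. For condition (ii), note that $Z^v$ is obtained from $Z$ by deleting exactly the relations $ba$ (and $dc$) that ``passed through'' $v$; the remaining relations are transported unchanged. Since admissibility of a length-$2$ path $cb$ or $ba$ at a vertex $w^v\neq v(\sharp),v(\flat)$ is governed by the same data as at $w$ in $(Q,Z)$, the bound (ii) is inherited. At $v(\sharp)$ and $v(\flat)$ there is no composable length-$2$ path through them in the levee at all (in every case the picture shows these vertices have in-valency or out-valency zero among the relevant arrows, so no path $cb$ with $h(b)=v(\sharp)$ and no path $ba$ with $t(b)=v(\sharp)$ exists, and symmetrically for $v(\flat)$), so (ii) is vacuous there.

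For part (ii), I would again argue by inspection of the four levee pictures. Recall from \Cref{defn-relational} that a vertex $w$ is relational if and only if there is a length-$2$ path $ba\in Z^v$ with $t(b)=w=h(a)$. At $v(\sharp)$: in each of the four cases, $v(\sharp)$ is the tail of the single arrow $b^v$ and the head of no arrow (stream, distributary cases) or of the single arrow $c^v$ (tributary, quadbutary cases). In the first situation there is no arrow into $v(\sharp)$ at all, so no such path exists; in the second, the only candidate path through $v(\sharp)$ is $b^v c^v$, but $b^v c^v\in Z^v$ would force $bc\in Z$, contradicting the defining inequality $bc\notin Z$ for a tributary or quadbutary vertex $v$. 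Hence $v(\sharp)$ is not relational. Symmetrically, $v(\flat)$ is the head of the single arrow $a^v$ and the tail of no arrow (stream, tributary) or of the single arrow $d^v$ (distributary, quadbutary); the only candidate path through $v(\flat)$ is $d^v a^v$, and $d^v a^v\in Z^v$ would force $da\in Z$, contradicting $da\notin Z$. So $v(\flat)$ is not relational either.

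\textbf{Main obstacle.} There is no deep obstacle here; the proof is a finite check against the four cases. The one thing to be careful about is the degenerate coincidences flagged in the remark after \Cref{defn-relational} — namely that some of the outer vertices may coincide, and that $a=b$ or $a=d$ is possible. I would want to confirm that such coincidences cannot create an unexpected length-$2$ path into or out of $v(\sharp)$ or $v(\flat)$, or cause a third arrow to appear at some $u^v$; since the levee only relabels arrows and redistributes the endpoints formerly at $v$ between two fresh vertices, collapsing outer vertices cannot increase any valency beyond what it was in $Q$, and cannot produce a path through $v(\sharp)$ or $v(\flat)$ other than the ones already considered. So the coincidences are harmless, and the case analysis goes through.
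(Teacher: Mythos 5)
Your overall approach matches the paper's proof: a direct verification of the two conditions in \Cref{def-locally-gentle-pair} and a case-by-case inspection of the levee diagrams. The verification of part (ii) is correct, and your treatment of the degenerate coincidences at the end is a reasonable extra precaution that the paper leaves implicit.

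However, there is a flaw in your argument for part (i). You assert that ``At $v(\sharp)$ and $v(\flat)$ there is no composable length-$2$ path through them in the levee at all,'' and that these vertices have ``in-valency or out-valency zero,'' concluding that condition (ii) of \Cref{def-locally-gentle-pair} is vacuous there. This is false in the tributary and quadbutary cases: $v(\sharp)$ has the arrow $c^v$ entering and $b^v$ leaving, so $b^v c^v$ is a genuine composable length-$2$ path with middle vertex $v(\sharp)$ (and likewise $d^v a^v$ at $v(\flat)$ in the distributary and quadbutary cases). Indeed, you yourself identify exactly these paths later when proving part (ii). The conclusion you want still holds, but for a weaker reason: at $v(\sharp)$ and $v(\flat)$ there is \emph{at most one} length-$2$ path passing through (not zero), so condition (ii) of \Cref{def-locally-gentle-pair} is trivially satisfied rather than vacuously satisfied. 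The paper phrases this more carefully by first observing that $v(\sharp)$ and $v(\flat)$ are each the head of at most one arrow and the tail of at most one arrow, and then handling the resulting lone path uniformly with the argument for the other vertices. You should revise the parenthetical to reflect this; otherwise the argument as written contains an incorrect claim that your own part (ii) contradicts.
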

\begin{proof}
Since $v$ is relational, there exist arrows $a,b$ such that $t(b)=v=h(a)$ and $ba\in Z$. 
As above let $v(\sharp) =t^{v}(b^{v})$ and $v(\flat) =h(a^{v})$ in the levee $Q^{v}$ of $Q$ at $v$. 

(i) By definition $Z^{v}$ consists of paths of length $2$. 
If $w=v(\sharp)$ or $w = v(\flat) $ then by construction there is at most $1$ arrow in $Q^{v}$ with head $w$ and at most $1$ arrow in $Q^{v}$ with tail $w$. 
Otherwise $w=u^{v}$ for $u$ a vertex in $Q$ with $u\neq v$, meaning there are at most $2$ arrows $c$ with $h(c)=u$ in $Q$, and so at most $2$ arrows $c^{v}$ in $Q^{v}$ with $h^{v}(c^{v})=u^{v}$. 
Hence, and dually, any vertex in $Q^{v}$ is the head (respectively, tail) of at most $2$ arrows in $Q^{v}$. 

Let $m^{v},n^{v},p^{v}$ be arrows in $Q^{v}$ corresponding to arrows $m, n , p $ in $Q$. 
Suppose that $t^{v}(m^{v})=h^{v}(n^{v})=h(p^{v})$, meaning that $t(m)=h( n )=h( p )$. 
If $t(m)\neq v$ and  $m^{v}p^{v},m^{v}n^{v}\in Z^{v}$ then $m p ,m n \in Z$,  implying that $ p = n $. 
Otherwise, when $t^{v}(m^{v})= v(\sharp)$ or $t^{v}(m^{(v)}) = v(\flat) $ it follows immediately that  $ p = n $, as depicted in \Cref{defn-levee}. 

This argument shows that for any arrow $m^{v}$ there is at most $1$ inadmissible path of the form $m^{v}n^{v}$. 
A similar argument shows that for any arrow $m^{v}$ there is at most $1$ admissible path of the form $m^{v}p^{v}$. 
Thus we have that for any arrow $m^{v}$ there is at most $1$ admissible (respectively, inadmissible) path of length $2$ ending with $m^{v}$. 
Dually, there exists at most $1$ admissible (respectively, inadmissible) path of length $2$ starting with $m^{v}$. 

(ii) 
Consider a path  $m^{v}n^{v}$ of length $2$ in $Q^{v}$ and let $w$ denote the vertex $t^{v}(m^{v})=h^{v}(n^{v})$. 
It follows by construction that if $w \in \{v(\flat),v(\sharp)\}$ then either   $m n \notin Z$ or  $t^{v}(m^{v})\neq h^{v}(n^{v})$. 
On the other hand, if $m^{v}n^{v}\in Z^{v}$ then $w\neq v(\sharp) $ and $w\neq v(\flat) $. 
\end{proof}

\Cref{lem:ZembykOrderDoesntMatter} shows that the order in which one takes successive levees does not matter.

\begin{lem}\label{lem:ZembykOrderDoesntMatter}
   Let $v,w\in Q_{0}$ be distinct and  relational. 
   Let $x=v^{w}\in Q^{w}_{0}$ and  $y=w^{v}\in Q^{v}_{0}$.  
   There is a quiver isomorphism $(Q^{w})^{x}\to (Q^{v})^{y}$ whose image of $(Z^{w})^{x}$ is $(Z^{v})^{y}$.  
\end{lem}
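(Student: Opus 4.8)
The plan is to exhibit an explicit quiver isomorphism $\Phi\colon (Q^{w})^{x}\to (Q^{v})^{y}$ carrying $(Z^{w})^{x}$ onto $(Z^{v})^{y}$, making precise the informal principle that the levee construction is \emph{local}: forming the levee at a relational vertex only alters the arrows and relations incident at that vertex, so doing it at $v$ and then at $w$ yields the same bound quiver as doing it at $w$ and then at $v$. Before constructing $\Phi$ I would check that the iterated levees even make sense, i.e.\ that $y=w^{v}$ is relational in $(Q^{v},Z^{v})$ and $x=v^{w}$ is relational in $(Q^{w},Z^{w})$. Since $w$ is relational there is a path $\beta\alpha\in Z$ with $t(\beta)=w=h(\alpha)$; because $v\neq w$, neither $\beta$ nor $\alpha$ is discarded in passing to $Z^{v}$, so $\beta^{v}\alpha^{v}\in Z^{v}$, and the head/tail formulas of \Cref{defn-levee} give $t^{v}(\beta^{v})=w^{v}=y=h^{v}(\alpha^{v})$; hence $y$ is relational by the last remark in \Cref{defn-relational}, and symmetrically so is $x$.

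Next I would define $\Phi$ on vertices by $(u^{v})^{y}\mapsto (u^{w})^{x}$ for $u\in Q_{0}\setminus\{v,w\}$, by $y(\sharp)\mapsto (w(\sharp))^{x}$ and $y(\flat)\mapsto (w(\flat))^{x}$, and by $(v(\sharp))^{y}\mapsto x(\sharp)$ and $(v(\flat))^{y}\mapsto x(\flat)$; and on arrows by $(a^{v})^{y}\mapsto (a^{w})^{x}$ for every $a\in Q_{1}$. Both assignments are manifestly bijective (each levee adds one vertex and keeps the arrow set in bijection with $Q_{1}$), so the substance is that $\Phi$ intertwines the head and tail functions. I would check this by a case analysis on an arrow $a\in Q_{1}$ according to which of $v,w$ it meets. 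If $a$ meets neither, both iterated levees merely relabel $t(a)$ and $h(a)$. If $a$ meets $v$ only, then forming $Q^{v}$ moves the end of $a$ at $v$ to one of $v(\sharp),v(\flat)$ and the second levee, taken along $w^{v}\notin\{v(\sharp),v(\flat)\}$, only relabels it; in the other order $Q^{w}$ relabels $a$ and the levee along $v^{w}$ moves that same end to one of $x(\sharp),x(\flat)$, which is precisely its image under $\Phi$. The case where $a$ meets $w$ only is symmetric. Finally, if $a$ meets both $v$ and $w$ (so $\{t(a),h(a)\}=\{v,w\}$, the degenerate coincidences allowed in \Cref{defn-relational} being treated the same way), then one end of the arrow lands in $\{x(\sharp),x(\flat)\}$ and the other in $\{(w(\sharp))^{x},(w(\flat))^{x}\}$ in either order of taking the two levees. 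Since each levee is defined only up to interchanging $\sharp$ and $\flat$, and only finitely many arrows are involved, I would fix these finitely many $\sharp/\flat$ choices once at the outset so that the two descriptions agree.

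It then remains to check that $\Phi$ takes $(Z^{w})^{x}$ onto $(Z^{v})^{y}$. Unwinding the recipe for $Z^{(-)}$ in \Cref{defn-levee} twice shows that a length-$2$ path $(m^{v})^{y}(n^{v})^{y}$ lies in $(Z^{v})^{y}$ exactly when $mn\in Z$ with $t(m)=h(n)\notin\{v,w\}$, and the identical description governs the paths $(m^{w})^{x}(n^{w})^{x}$ lying in $(Z^{w})^{x}$; since $\Phi$ sends $(m^{w})^{x}$ to $(m^{v})^{y}$ for each $m$, the two relation sets correspond under $\Phi$, completing the argument.

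The main obstacle is organisational rather than mathematical: the notation $a,b,c,d$ of \Cref{defn-relational} is reused at both $v$ and $w$, and the degenerate situations — an arrow incident at both relational vertices, a loop at a relational vertex, or the coincidences $a=b$ or $a=d$ permitted there — all have to be folded into the last case without special pleading. The way I would keep this under control is to phrase everything in terms of \emph{which end of each arrow meets which of $v$ and $w$}, rather than the role-labels $a,b,c,d$; with that bookkeeping the head/tail verification becomes a short and uniform check.
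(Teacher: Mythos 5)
Your proposal is correct and takes essentially the same route as the paper: both proofs construct the explicit quiver isomorphism $\Phi$ by mapping $(u^{w})^{x}\mapsto (u^{v})^{y}$ on generic vertices, swapping the two $\sharp/\flat$-pairs appropriately, sending $(a^{w})^{x}\mapsto (a^{v})^{y}$ on arrows, and then checking head/tail compatibility and the correspondence of iterated relation sets. The paper organizes the verification by drawing out the hardest case (two adjacent quadbutaries) and asserting the rest follow by deleting arrows, whereas you organize it uniformly by asking which ends of each arrow meet $v$ or $w$; your added preliminary observation that $x$ and $y$ are themselves relational (so the iterated levees are defined) is a useful point the paper leaves implicit.
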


\begin{proof}
There are several cases. 
Without loss of generality we consider the most complicated case. 
So suppose $ b $ is an arrow with $h( b )=w$ and $t( b )=v$, and suppose additionally that $w$ and $v$ are both quadbutaries. 
The proof for the other cases where $v$ and $w$ are adjacent can be seen by removing the appropriate arrows from the diagrams in the sequel, and if $v$ and $w$ are not adjacent the result is immediate. 
In the case we are considering, the following diagram depicts the arrows    incident at $v$ together with those incident at  $w$. 
\[
\begin{tikzcd}
{} & {} & {}\ar[dl, "l"name=betpp] & {} &  {}\ar[dl, "a"name=alp]\\
{} & w\ar[ul,  " n"name=gam]\ar[dl, swap, " m "name=gamp] & {} &  v\ar[ll, swap, " b "name=bet]\ar[dl, swap, " d "name=betp] & {}\\
{} & {} & {} & {}  & {}\ar[ul, swap,  " c "name=alpp]
\ar[-, shorten <= 3pt, shorten >= 3pt, from=alp, to=bet, bend right = 60, dashed]
\ar[-, shorten <= 3pt, shorten >= 3pt, from=alpp, to=betp, bend left = 60, dashed]
\ar[-, shorten <= 3pt, shorten >= 3pt, from=bet, to=gamp, bend left = 60, dashed]
\ar[-, shorten <= 3pt, shorten >= 3pt, from=gam, to=betpp, bend left = 60, dashed]
\end{tikzcd} 
\]

The dashed arrows in the diagram indicate the following paths of length $2$: $ n  l$, $ m  b $, $ b a$ and $ d  c $. 
These paths constitute an exhaustive list of the relations $fg \in Z$ with $f,g\in Q_{1}$ and where $t(f)=h( g )$ is either $w$ or $v$. 

Let $x=v^{w}\in Q_{0}^{w}$ and let $y=w^{v}\in Q_{0}^{v}$. 
For the vertex $q=v\in Q_{0}$ (respectively, $q=x $) 
 in the quiver $P=Q$ (respectively, $P=Q^{w}$), 
as usual we use $q(\sharp)$ and $q(\flat)$ to denote the  vertices in $P^{q}$ which replace $q$. 
To avoid confusion by ambiguity, for the vertex $r=w$ (respectively, $r=y$)
 in the quiver $S=Q$ (respectively, $S=Q^{v}$),
we instead use $r(\Delta)$ and $r(\nabla)$ to denote the new vertices in $S^{r}$ which replace $r$. 
We stress that the symbols $\Delta$ and $\nabla$ are only used in this proof as placeholders for (either of) $\sharp$ and $\flat$. 

We now construct $(Q^{w},Z^{w})$ and  $(Q^{v},Z^{v})$. 
They are respectively, depicted on the left and right below. 
Note that these depictions are found by appropriately adjusting the previous diagram that depicted the arrows and relations in  $Q$ that involve $ b $.

\[
\begin{array}{ccc}
Q^{w} 
&
&
Q^{v}\\
\begin{tikzcd}
[column sep = 0.75cm]
{} & w(\Delta)\ar[l,  " m^{w}"'name=gam] & {}\ar[l, "  l^{w}"name=betpp] & {} & {} &  {}\ar[dl, "a^{w}"name=alp]\\
{} & w(\nabla)\ar[l,  " n^{w}"name=gamp] & {} & {} &  x\ar[lll, swap, " b^{w}"name=bet]\ar[dl, swap, " d^{w}"name=betp] & {}\\
{} & {} & {} & {} & {}  & {}\ar[ul, swap,  " c^{w}"name=alpp]
\ar[-, shorten <= 3pt, shorten >= 3pt, from=alp, to=bet, bend right = 60, dashed]
\ar[-, shorten <= 3pt, shorten >= 3pt, from=alpp, to=betp, bend left = 60, dashed]
\end{tikzcd} 
& 
\hspace{.75cm}
&
\begin{tikzcd}[column sep = 0.75cm]
{} & {} & {}\ar[dl, "  l^{v}"name=betpp]  &{}& v(\flat) \ar[l, swap, "d^{v}"name=betp]  & {}\ar[l,   "a^{v}"name=alpp]
\\
{} & y\ar[ul,  " n^{v}"name=gam]\ar[dl, swap, " m^{v}"name=gamp] & {} &{}&  v(\sharp) \ar[lll, swap, "b^{v}"name=bet] & {}\ar[l, "c^{v}"'name=alp]\\
{} & {} & {} &{}& {}  & {}
\ar[-, shorten <= 3pt, shorten >= 3pt, from=bet, to=gamp, bend left = 60, dashed]
\ar[-, shorten <= 3pt, shorten >= 3pt, from=gam, to=betpp, bend left = 60, dashed]
\end{tikzcd}
\end{array}
\]
We now repeat this process to depict $(Q^{w})^{x}$ and $(Q^{v})^{y}$ as follows. 
Here for each of $u=v,w$, any $h\in Q_{1}$ and any $r\in Q^{u}_{0}$ we let $h^{u,r}$ denote the arrow $(h^{u})^{r}$ in $(Q^{u})^{r}$. 
\[
\begin{array}{cc}
(Q^{w})^{x} 
&
(Q^{v})^{y}\\
\begin{tikzcd}
[column sep = 0.7cm]
{} & w(\Delta)^{x}\ar[l, swap,   "\quad m^{w,x}"name=gam] & {}\ar[l, swap, "\quad  l^{w,x}"name=betpp] & {} & x(\sharp)\ar[l, swap, "\quad d^{w,x}"name=betp] &  {}\ar[l, swap,  "\quad a^{w,x}"name=alp]\\
{} & w(\nabla)^{x}\ar[l, swap, "\quad n^{w,x}"name=gamp] & {} & {} &  x(\flat)\ar[lll, swap, "\quad b^{w,x}"name=bet] & {}\ar[l, swap,  "\quad c^{w,x}"name=alpp]
\end{tikzcd} 
& 
\begin{tikzcd}
[column sep = 0.7cm]
{} & y(\Delta)\ar[l, swap,   "\quad m^{v,y}"name=gam] & {}\ar[l, swap, "\quad  l^{v,y}"name=betpp] & {} & v(\flat)^{y}\ar[l, swap, "\quad d^{v,y}"name=betp] &  {}\ar[l, swap, "\quad a^{v,y}"name=alp]\\
{} & y(\nabla)\ar[l, swap, "\quad n^{v,y}"name=gamp] & {} & {} &  v(\sharp) ^{y}\ar[lll, swap, "\quad b^{v,y}"name=bet] & {}\ar[l, swap,  "\quad c^{v,y}"name=alpp]
\end{tikzcd} 
\end{array}
\]
Note that all the remaining vertices in $(Q^{w})^{x}$ have the form $u^{x}$ where $u\neq x,w$. 
Likewise all the remaining vertices in $(Q^{v})^{y}$ have the form $u^{y}$ where $u\neq y,v$. 
We define a mapping of vertices by $u^{x}\mapsto u^{y}$ for any $ u\neq x,w$, and otherwise
\[
\begin{array}{ccccc}
w(\Delta)^{x}\mapsto y(\Delta), 
&
w(\nabla)^{x}\mapsto y(\nabla), 
&
x(\sharp)\mapsto v(\flat)^{y}
&
x(\flat)\mapsto v(\sharp)^{y}.
\end{array}
\]
We define the assignment on arrows by $h^{w,x}\mapsto h^{v,y}$. 
By construction, this is an isomorphism sending $(Z^{w})^{x}$ to $(Z^{v})^{y}$ as required. 
\end{proof}

\begin{defn}
    \label{defn-zembyk-quiver-scissors} 
    By the \emph{Zembyk excision} of a locally gentle pair $(Q,Z)$ we mean a quiver which we denote $Q^{\zembyk}(Z)$ and define iteratively as follows.  Let $Q^{(0)}=Q$ and $Z^{(0)}=Z$. 

    Let $(Q^{(n)}, Z^{(n)})$ be a locally gentle pair for some given integer $n\geq0$.
    \begin{enumerate}
            \item[$(1;n)$]            \begin{enumerate}[(i)]
                \item If $Z^{(n)}= \emptyset $ then go to step $(2;n)(i)$. 
                \item If $Z^{(n)}\neq \emptyset $ then choose $v(n)\in Q^{(n)}_{0}$ relational and go to step $(2;n)(ii)$. 
            \end{enumerate}
            \item[$(2;n)$]\begin{enumerate}[(i)]
            \item Let $Q^{\zembyk}(Z)=Q^{(n)}$ and terminate the algorithm. 
                \item  Let $Q^{(n+1)}=(Q^{(n)})^{v(n)}$ and $Z^{(n+1)}=(Z^{(n)})^{v(n)}$ and go to step $(1;n+1)$. 
            \end{enumerate}
    \end{enumerate}
By \Cref{lem:ZembykBasics} we have that  $\vert Q^{(n+1)}_{0}\vert =\vert Q^{(n)}_{0}\vert +1$ and $\vert Z^{(n+1)}\vert <\vert Z^{(n)}\vert$ when $Z^{(n)}\neq \emptyset$. 
By \Cref{lem:ZembykOrderDoesntMatter} the output of the algorithm is independent of the choice of $v(n)$ in $(1;n)(ii)$.   
It follows that the algorithm terminates and is unambiguous.  
\end{defn} 

\begin{rem}
\label{remark-about-zembyks-excision}
    One motivation for \Cref{defn-zembyk-quiver-scissors} is as follows. 
    Let $(Q,Z)$ be a gentle pair. 
    Part of the  main result in work of Zembyk \cite[p. 649, Theorem]{Zembyk-Skewed-Gentle-A} says that the quiver $Q^{\zembyk}(Z)$ is a disjoint union of finitely many copies of quivers of Dynkin type $\BA$ or   $\tilde{\BA}$. 
    In fact this result  is  algebraic in its formulation. 
    We explain this in  \Cref{remark-about-zembyks-excision-algebraic}. 
\end{rem}

\begin{example}
    Consider the gentle pair $(Q,Z_1)$ and the locally gentle pair  $(Q,Z_2)$ from  \Cref{example:gentlevslocallygentle}. 
    In \Cref{example-ambiguous-notation-levee} we computed the levees of  $(Q,Z_1)$ and  $(Q,Z_2)$ with respect to the vertex $v=2$, which was the unique relational vertex in either case. 
    
    The quivers produced by the levees were therefore the Zembyk excisions $Q^{\zembyk}(Z_{1})$ and $Q^{\zembyk}(Z_{2})$ from \Cref{defn-zembyk-quiver-scissors}. 
    Thus the notation $Q^{\zembyk}(Z)$ is not ambiguous. 
\end{example}

\begin{example}\label{running-example:zembyk-excision}
   Recall the pair $(Q,Z)$ from \Cref{running-example-quiver}. 
In $(1;0)(b)$ we let $v(0)=2$. 
In $(1;1)(b)$ we let $v(1)=5^{v(0)}$. 
In $(1;2)(b)$ we let $v(2)=4^{v(1)}$. 
In $(1;3)(b)$ we let $v(3)=3^{v(2)}$. 
Hence $Q^{\zembyk}(Z)$ is the quiver from \Cref{fig-intro-quivers} in the introduction, namely
\begin{center}
\begin{tikzpicture}
	\node (1) at (-1.732,1) {1};
	\node (2) at (0,2) {$2(\sharp)$};
	\node (3) at (0,0) {$3(\sharp)$};
	\node (3') at (2,0) {$3(\flat)$};
	\node (2') at (2,2) {$2(\flat)$};
	\node (5) at (4,2) {$5(\sharp)$};
        \node (4') at (4,0) {$4(\flat)$};
         \node (4) at (6,0) {$4(\sharp)$};
	\node (5') at (6,2) {$5(\flat)$};
	\node (6) at (8,2) {6};
	
	\draw[->] (1) to node[above]{$\alpha$} (2);
	\draw[->] (2) to node[left]{$\beta$} (3);
	\draw[->] (3) to node[below]{$\nu$} (1);
	\draw[->] (5) to node[above]{$\delta$} (2');
	\draw[->] (4) to node[left]{$\epsilon$} (5');
	\draw[->] (3') to node[above]{$\zeta$} (4');
	\draw[->] (5') to node[above]{$\eta$} (6);
\end{tikzpicture}
\end{center}
\end{example}

\section{Surface dissections}\label{sec:SurfaceDissection}

Here we recall  topological notions we will need for our main results. 

\subsection{Surface dissections for locally gentle pairs}
\label{subsec-surfaces-and-locally-gentle-pairs}

We will later consider possibly disconnected surfaces, so our definition of a marked surface will allow this.

\begin{defn}\label{defn:arc} A \emph{marked surface} $(S,M)$ is either a pair consisting of a connected, oriented, Riemann surface  $S$ and a finite set of points $M\subset S$
such that each boundary component of $S$ contains at least one point in $M$, or a disjoint union of such surfaces, each with a set of marked points. 
An \emph{arc} $\gamma$ on $(S,M)$ is a curve considered up to homotopy which is contained in $S$, has both endpoints in $M$, and whose interior is disjoint from $\partial S\setminus M$.
\end{defn}

Let $e(\gamma, \delta)$ be the minimal number of crossings between any homotopic representatives of $\gamma$ and $\delta$, excluding endpoints. We say $\gamma$ and $\delta$ are \emph{compatible} if $e(\gamma, \delta) = 0$. 
Any collection $D$ of pairwise compatible arcs is called a \textit{dissection.} 
We define a \emph{face} of $D$ to be a connected component of $S \backslash D$. The set of all faces corresponding to the dissection $D$ is denoted $\mathcal{F}(D)$. A dissection $D$ is \emph{cellular} if every $F \in \mathcal{F}(D)$ is homeomorphic to a (unpunctured) disk. 

 Given a marked point $v \in M$ such that $v$ lies on the boundary of $S$, we define the \emph{neighbors} of $v$ to be the vertices which come immediately before or after the vertex as we follow the boundary component which $v$ lies on, using a fixed orientation. These neighbors can be equal to each other and they also can be equal to $v$. Marked points which instead lie on the interior of $S$ are called ``punctures''.

\begin{defn}\cite[Definition 3.5]{Palu-Pilaud-Plamondon-non-kissing-non-crossing}\label{def:DualDissections}
 Consider a marked surface $(S,M)$, and partition $M$ into $V \cup V^*$ such that the neighbors of every $v \in V$ which lie on $\partial S$ are in $V^*$ and similarly for any $f^* \in V^*$. Let $D$ be a cellular dissection for $(S,V)$ and similarly for $D^*$ with respect to $(S,V^*)$. Then, we say $D,D^*$ are \emph{dual} if 
 \begin{enumerate}
     \item there is a bijection between vertices in $V$ and faces in $\mathcal{F}(D^*)$ such that  for every $v \in V$, there is a unique face $v^* \in \mathcal{F}(D^*)$  such that $v$ lies in $v^*$ (either on the boundary or in the interior),
     \item there is a bijection between faces $f \in \mathcal{F}(D)$ and vertices $f^* \in V^*$ in the same manner as (1), and 
     \item for $v,w \in V$, there is an arc between $v$ and $w$ in $D$ which borders faces $f,g \in \mathcal{F}(D)$ if and only if there is an arc between vertices $f^*$ and $g^*$ in $D^*$ which borders faces $v^*,w^* \in \mathcal{F}(D^*)$.
 \end{enumerate}
\end{defn}

Given a dissection $D$ of $(S,V \cup V^*)$, its dual $D^*$ is uniquely determined up to homotopy. 

It follows from the definition of a pair of dual dissections that, for every $\tau \in D$, there exists a unique $\tau^* \in D^*$ such that $e(\tau,\tau^*) = 1$ and vice versa. Moreover, given $\tau \in D$ and $\mu^* \in D^*$ such that $\mu^* \neq \tau^*$, $e(\tau,\mu^*) = 0$.

\begin{example}
\label{ex:dissection}
In Figure \ref{fig:dissectionexample}, we give a disk $S$ with two sets $V = \{a,b,c,d,e,f\}$ and $V^* = \{i^*,ii^*,iii^*,iv^*,v^*,vi^*\}$ of marked points such that $V \cap \partial S$ and $V^* \cap \partial S$ alternate on $\partial S$ and two surface dissections $D$ and its dual $D^*$ which use marked points from $V$ and $V^*$ respectively. The vertices in $V$ and arcs in $D$ are in blue in (a) and gray in (b) and the vertices in $V^*$ and arcs in $D^*$ are in gray in (a) and in red in (b).
 The dissection $D$ is a cellular dissection of $(S,V)$ and similarly for $D^*$ and $(S,V^*)$. One can check that $D$ and $D^*$ are dual dissections, and the vertices, arcs, and faces of each have been labeled to highlight the symmetry of Definition \ref{def:DualDissections}.

    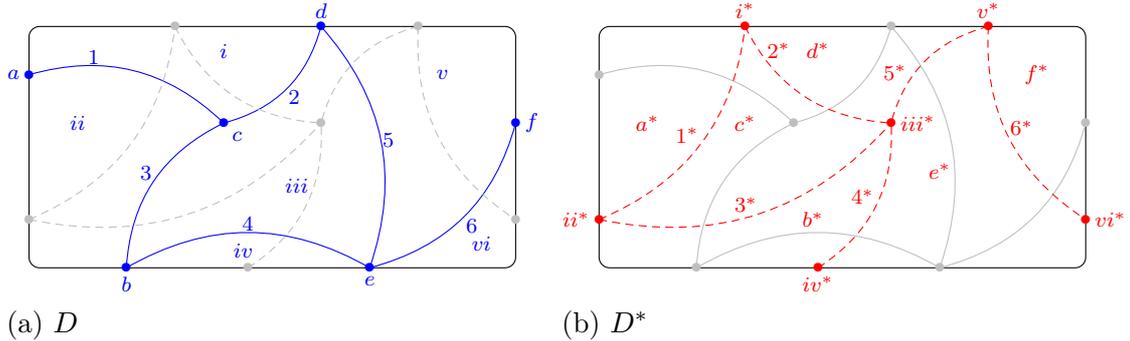
\begin{figure}[H]
        \centering
        \begin{subfigure}[b]{0.45\textwidth}            
            \begin{tikzpicture}[outer sep =0.3, inner sep =.4, scale= 0.64, font=\tiny]
            	
\draw[rounded corners] (0,1)--(0,5)--(10,5)--(10,0)--(0,0)--(0,1);

                \begin{scope}[blue]

\node[label=left:$a$]  (a) at (0,4) {$\bullet$};
\node[label=below:$b$] (b) at (2,0) {$\bullet$};
\node[label=below right:$c$] (c) at (4,3) {$\bullet$};
\node[label=above:$d$] (d) at (6,5) {$\bullet$};
\node[label=below:$e$] (e) at (7,0) {$\bullet$};
\node[label=right:$f$] (f) at (10,3) {$\bullet$};

\draw (a.center) to[bend left] node[above, pos=0.3] {1} (c.center);
\draw (d.center) to[bend left] node[below right] {2} (c.center);
\draw (b.center) to[bend left] node[above left] {3} (c.center);
\draw (b.center) to[bend left] node[above] {4} (e.center);
\draw (d.center) to[bend left] node[right] {5} (e.center);
\draw (f.center) to[bend left] node[below right] {6} (e.center);

\node at (4,4.5) {$i$};
\node at (1,3) {$ii$};
\node at (5.5,1.7) {$iii$};
\node at (4.4,.35) {$iv$};
\node at (8.5,4) {$v$};
\node at (9.3,.5) {$vi$};
\end{scope}

\begin{scope}[lightgray, densely dashed]
\node (i) at (3,5) {$\bullet$};
\node (ii) at (0,1) {$\bullet$};
\node (iii) at (6,3) {$\bullet$};
\node (iv) at (4.5,0) {$\bullet$};
\node (v) at (8,5) {$\bullet$};
\node (vi) at (10,1) {$\bullet$};

\draw (ii.center) to[bend right] (i.center);
\draw (i.center) to[bend right] (iii.center);
\draw (ii.center) to[bend right] (iii.center);
\draw (iv.center) to[bend right] (iii.center);
\draw (v.center) to[bend right] (iii.center);
\draw (v.center) to[bend right] (vi.center);
\end{scope}
            \end{tikzpicture}
            \caption{$D$}
        \end{subfigure}
\begin{subfigure}[b]{0.45\textwidth}            
    \begin{tikzpicture}[outer sep =0.3, inner sep =.4, scale= 0.64, font=\tiny]
        \draw[rounded corners] (0,1)--(0,5)--(10,5)--(10,0)--(0,0)--(0,1);
                \begin{scope}[lightgray]
\node (a) at (0,4) {$\bullet$};
\node (b) at (2,0) {$\bullet$};
\node (c) at (4,3) {$\bullet$};
\node (d) at (6,5) {$\bullet$};
\node (e) at (7,0) {$\bullet$};
\node (f) at (10,3) {$\bullet$};

\draw (a.center) to[bend left] (c.center);
\draw (d.center) to[bend left] (c.center);
\draw (b.center) to[bend left] (c.center);
\draw (b.center) to[bend left] (e.center);
\draw (d.center) to[bend left] (e.center);
\draw (f.center) to[bend left] (e.center);
\end{scope}
\begin{scope}[red, densely dashed]
\node[label=above:$i^*$] (i) at (3,5) {$\bullet$};
\node[label=left:$ii^*$] (ii) at (0,1) {$\bullet$};
\node[label=right:$iii^*$] (iii) at (6,3) {$\bullet$};
\node[label=below:$iv^*$] (iv) at (4.5,0) {$\bullet$};
\node[label=above:$v^*$] (v) at (8,5) {$\bullet$};
\node[label=right:$vi^*$] (vi) at (10,1) {$\bullet$};

\draw (ii.center) to[bend right] node[above left] {$1^*$} (i.center);
\draw (i.center) to[bend right] node[above right, pos=0.2] {$2^*$}(iii.center);
\draw (ii.center) to[bend right] node[above left] {$3^*$}(iii.center);
\draw (iv.center) to[bend right] node[above left] {$4^*$}(iii.center);
\draw (v.center) to[bend right] node[above left, pos=0.7] {$5^*$}(iii.center);
\draw (v.center) to[bend right] node[above right] {$6^*$}(vi.center);

\node at (1,3) {$a^*$};
\node at (4.4,1) {$b^*$};
\node at (3,3) {$c^*$};
\node at (4.5,4.5) {$d^*$};
\node at (7,2) {$e^*$};
\node at (9,4) {$f^*$};
\end{scope}
            \end{tikzpicture}
            \caption{$D^*$}
        \end{subfigure}
        \caption{A dissection $D$ and its dual $D^*$, with associated labels as in \cref{ex:dissection}. Note the correspondence between marked points and dual faces; and vice versa. }
        \label{fig:dissectionexample}
    \end{figure}

\end{example}

  Given a locally gentle pair $\overline{Q} = (Q,Z)$, Palu, Pilaud, and Plamondon associate a surface $S_{\overline{Q}}$ with marked points partitioned into disjoint sets $V \cup V^*$ and a pair of dual cellular dissections $D_{\overline{Q}}$ and $D^*_{\overline{Q}}$, where arcs in $D_{\overline{Q}}$ $(D^*_{\overline{Q}})$ have endpoints in $V$ $(V^*$) respectively. 
We refer the reader to \cite{Palu-Pilaud-Plamondon-non-kissing-non-crossing} for a full description of the model.
This construction is dual to the following method to recover a locally gentle pair $\overline{Q}_D = (Q_D,Z_D)$  from a surface $S$ and cellular dissection $D$.
\begin{enumerate}
    \item There is one vertex $i \in Q_0$ for each $\tau_i \in D$.
    \item There is one arrow $a \in Q_1$ with $t(a) = i$ and $h(a) = j$ whenever we have a pair of arcs $\tau_i,\tau_j$ which share a common endpoint and at this endpoint, $\tau_j$ immediately follows $\tau_i$ in clockwise order. 
    \item There is a product of two distinct arrows $ba \in Z$ for every pair of arrows such that  $t(b) = h(a)$ and $\tau_{h(b)}, \tau_{t(b)}, \tau_{t(a)}$ border the same face of $D$.
\end{enumerate}

Given a vertex $i \in Q_0$, we denote the associated arc in $D_{\overline{Q}}$ as $\tau_i$ and the unique arc from $D^*_{\overline{Q}}$ which crosses $\tau_i$ as $\tau_i^*$.  

Palu, Pilaud, and Plamondon show in Proposition 4.5 in \cite{Palu-Pilaud-Plamondon-non-kissing-non-crossing} that, if we swap $D$ for its dual $D^*$, this amounts to constructing the surface with dissection for $(Q,Z')$ where $Z'$ is the complement of $Z$ in the set of all paths of length $2$ in $Q$.

\begin{example}\label{Ex:DissectionToQuiver}
The dissection $D$ in Figure \ref{fig:dissectionexample} corresponds to the locally gentle pair $(Q,Z)$ from \Cref{running-example:zembyk-excision}. One can, for example, see that we have one 4-cycle with all subpaths of length $2$ in $Z_D$ associated to the \emph{internal} face $iii$ of $D$, where by internal we mean that all boundary edges of this face come from $D$. The 3-cycle in $(Q,Z)$ with no relations is associated to the puncture $c$ (which gives an internal face $c^*$ of $D^*$). 
\end{example}

\begin{rem}
A pair $(Q,Z)$ is a gentle pair, so that it has finitely many admissible paths, exactly when all endpoints of arcs in the associated dissection $D$ lie on the boundary of $S$. Surface models for gentle pairs were given in \cite{Baur-Coelho-Simoes-geometric-model-module-cat-gentle} and \cite{opper2018geometric}. 
\end{rem}

 Given a locally gentle pair $\overline{Q} = (Q,Z)$, with an associated surface and pair of dual dissections $D_{\overline{Q}}, D_{\overline{Q}}^*$, let $R^*_{\overline{Q}} = \{\tau_i^* \in D_{\overline{Q}}^* \colon  \exists ba \in Z \text{ such that } t(b) = h( a ) = i\}$.  In other words, $R^*_{\overline{Q}}$ is the subset of $D^*_{\overline{Q}}$ consisting of all arcs which correspond to a relational vertex in $\overline{Q}$. As an example, Figure \ref{fig:relation_dissection} shows the set $D_{\overline{Q}} \cup R^*_{\overline{Q}}$ for the pair of dual dissections $D_{\overline{Q}}, D^*_{\overline{Q}}$ associated to the quiver in Example \ref{running-example:zembyk-excision}.

\begin{figure}[H]
    \centering
    \begin{tikzpicture}[inner sep=1, outer sep = 0]
        \draw[rounded corners] (0,1)--(0,5)--(10,5)--(10,0)--(0,0)--(0,1);
        
        \begin{scope}[blue]
        \node[label=left:$a$]  (a) at (0,4) {$\bullet$};
        \node[label=below:$b$] (b) at (2,0) {$\bullet$};
        \node[label=below right:$c$] (c) at (4,3) {$\bullet$};
        \node[label=above:$d$] (d) at (6,5) {$\bullet$};
        \node[label=below:$e$] (e) at (7,0) {$\bullet$};
        \node[label=right:$f$] (f) at (10,3) {$\bullet$};

        \draw (a.center) to[bend left] node[above right] {1} (c.center);
        \draw (d.center) to[bend left] node[below right] {2} (c.center);
        \draw (b.center) to[bend left] node[above left] {3} (c.center);
        \draw (b.center) to[bend left] node[above right] {4} (e.center);
        \draw (d.center) to[bend left] node[below right] {5} (e.center);
        \draw (f.center) to[bend left] node[below right] {6} (e.center);
        
        
        \end{scope}
        \begin{scope}[red, densely dashed]
        \node (i) at (3,5) {$\bullet$};
        \node (ii) at (0,1) {$\bullet$};
        \node (iii) at (6,3) {$\bullet$};
        \node (iv) at (4.5,0) {$\bullet$};
        \node (v) at (8,5) {$\bullet$};
        \node (vi) at (10,1) {$\bullet$};
        
        \draw (i.center) to[bend right] (iii.center);
        \draw (ii.center) to[bend right] (iii.center);
        \draw (iv.center) to[bend right] (iii.center);
        \draw (v.center) to[bend right] (iii.center);
        \end{scope}
         
        \begin{scope}[lightgray, densely dashed]
        \draw (ii.center) to[bend right] (i.center);
        \draw (v.center) to[bend right] (vi.center);
        \end{scope}
        
    \end{tikzpicture}

    \caption{The arcs from the dissection $D_{\overline{Q}}$ for $\overline{Q}$ from Example \ref{running-example:zembyk-excision}  are solid and blue, the arcs from $R^*_{\overline{Q}_D}=\{\tau^*_2,\tau^*_3,\tau^*_4,\tau^*_5\}$ are dashed and red, and the arcs $D^*_{\overline{Q}} \backslash R^*_{\overline{Q}}$ are dashed and gray. }
    \label{fig:relation_dissection}
\end{figure}
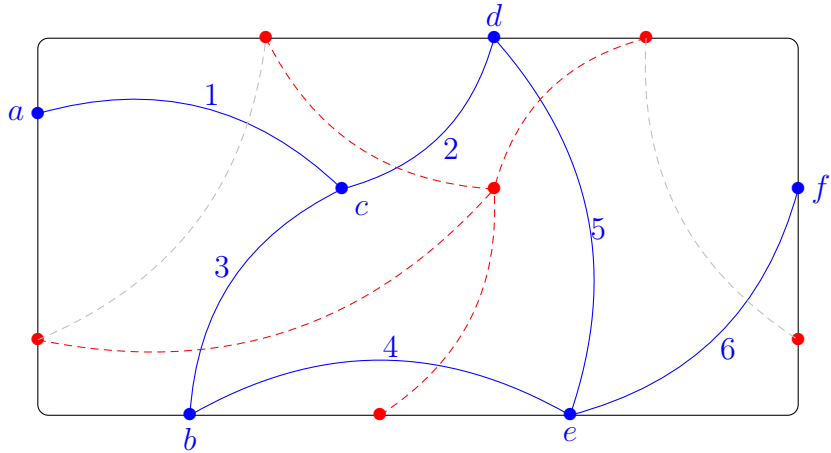

 In the following, since we consider both a dissection $D$ and a subset of its dual dissection $R^*$ we will have some pairs of arcs which cross. We will still refer to a \emph{face} as a connected component of $S\backslash(D \cup R^*)$, i.e. an element of $\mathcal{F}(D \cup R^*)$. Note that some of the bounding arcs on a face in $\mathcal{F}(D \cup R^*)$ could intersect in a point which is not in $V \cup V^*$.

\begin{lem}\label{lem:DivideIntoSmallerFaces}
Given a surface with dual dissections $(S_{\overline{Q}},V_{\overline{Q}} \cup V^*_{\overline{Q}}, D_{\overline{Q}}, D^*_{\overline{Q}})$ associated to a locally gentle pair $\overline{Q}$, every face in $\mathcal{F}(D_{\overline{Q}} \cup R^*_{\overline{Q}})$ is bounded by at most two arcs from $D_{\overline{Q}}$ and, if there are exactly two, they share an endpoint which is also lies on the face.
\end{lem}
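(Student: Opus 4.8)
The statement concerns the faces of $\mathcal{F}(D_{\overline{Q}} \cup R^*_{\overline{Q}})$, and the key observation is that passing from $\mathcal{F}(D_{\overline{Q}})$ to $\mathcal{F}(D_{\overline{Q}} \cup R^*_{\overline{Q}})$ only subdivides existing faces — adding an arc to a dissection can only cut a face into smaller pieces, never merge. So I would first reduce to understanding the faces of $D_{\overline{Q}}$ themselves and then track what happens when the arcs of $R^*_{\overline{Q}}$ are superimposed.

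\textbf{Step 1: faces of $D_{\overline{Q}}$.} Recall from the Palu--Pilaud--Plamondon construction (as recalled in \S\ref{subsec-surfaces-and-locally-gentle-pairs}) that the faces of $D_{\overline{Q}}$ are in bijection with the vertices of $V^*_{\overline{Q}}$, equivalently with the arrows of $Q$: each arrow $a\colon i\to j$ corresponds to a corner where $\tau_i$ and $\tau_j$ meet, and the face containing that corner is cut out by a sequence of arcs of $D_{\overline{Q}}$ whose corners record a maximal admissible path (or a cycle of such, in the internal/band case). Since $(Q,Z)$ is locally gentle, every vertex is the head of at most two arrows and the tail of at most two arrows, so every face of $D_{\overline{Q}}$ has at most two ``corners'' lying at a given marked point; the key point is that a single face of $D_{\overline{Q}}$ may be bounded by many arcs of $D_{\overline{Q}}$, but whenever it is bounded by exactly two arcs of $D$ meeting at a marked point $v\in V$, that configuration is precisely recording a length-$2$ path at $v$.

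\textbf{Step 2: the cut by $R^*_{\overline{Q}}$.} Each $\tau^*_i\in R^*_{\overline{Q}}$ crosses exactly one arc of $D_{\overline{Q}}$, namely $\tau_i$, with a single transverse intersection (this is the duality property stated just before \Cref{ex:dissection}), and $i = h(a)=t(b)$ for some $ba\in Z$, i.e.\ $i$ is relational. The arc $\tau^*_i$ has its endpoints in $V^*$, so inside each face of $D_{\overline{Q}}$ it runs from a vertex $f^*$ (dual to a face) along, crossing $\tau_i$ once. The effect is that $\tau^*_i$ separates, within the ambient face, the side of $\tau_i$ corresponding to the admissible continuation of paths through $i$ from the side corresponding to the inadmissible (relational) continuation. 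I would make this precise by examining, case by case, the four types of relational vertex (stream, tributary, distributary, quadbutary) from \Cref{defn-relational}: in each case $\tau^*_i$ cuts off from each adjacent $D$-face a region bounded by at most two $D$-arcs, and when it is exactly two they are the two arcs $\tau_{h(b)},\tau_{t(a)}$ meeting at $\tau_i$'s relevant endpoint. A convenient bookkeeping device: every face of $D_{\overline{Q}}\cup R^*_{\overline{Q}}$ inherits from the dissection structure a cyclic sequence of boundary segments alternating between ``arc segments'' and ``corners''; a corner at a point of $V$ is where two $D$-arcs meet, a corner at a point of $V^*$ is where two $R^*$-arcs meet, and the remaining corners are the new transverse crossings in $V\setminus(V\cup V^*)$. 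One then checks that after the cut no face can retain two distinct corners at points of $V$, because such a configuration would force an honest length-$2$ admissible subpath through a relational vertex that $\tau^*_i$ has separated off.

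\textbf{Main obstacle.} The real work is the case analysis in Step 2: one must verify that superimposing the arcs of $R^*_{\overline{Q}}$ genuinely chops every $D$-face down so that at most two $D$-arcs survive on each piece — in particular that no new face bounded by three or more $D$-arcs is created and that the ``shared endpoint lying on the face'' conclusion holds. The subtlety is that a relational vertex may be a quadbutary, so two arcs of $R^*_{\overline{Q}}$ can cross the \emph{same} $\tau_i$ (on the two sides, via $ba$ and $dc$), and one must confirm these two cuts are consistent and together leave at most two $D$-arcs per resulting face; similarly one must handle the degenerate coincidences noted after \Cref{defn-relational} (e.g.\ $a=b$). I expect the argument to go through by reducing, via the local picture at each marked point in $V$, to the four diagrams of \Cref{defn-levee}, since the levee construction is exactly the combinatorial shadow of this geometric cut.
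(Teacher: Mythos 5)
Your plan is in the right spirit — both you and the paper observe that adding the arcs of $R^*_{\overline{Q}}$ only subdivides existing faces of $D_{\overline{Q}}$, and both reduce to understanding the faces of $D_{\overline{Q}}$ — but the paper organises the argument face-by-face rather than vertex-by-vertex (or dual-arc-by-dual-arc) as you propose. Concretely, the paper's proof splits the faces of $D_{\overline{Q}}$ into two classes (following Proposition~1.12 of \cite{opper2018geometric}): boundary faces, whose corners away from the boundary edge are exactly the relational vertices of the face, and internal faces. For a boundary face with $m\geq 3$ bounding $D$-arcs, the $m-2$ inner corners are relational, and adding those $m-2$ dual arcs \emph{simultaneously} fans the face from its boundary vertex $f^*\in V^*$ into $m-1$ bigons. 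For an internal face, the paper's key observation — which your proposal never makes explicit — is that the face corresponds to a directed cycle in $Q$ with \emph{every} length-$2$ subpath lying in $Z$, so \emph{all} of its bounding $D$-arcs correspond to relational vertices and all the radiating dual arcs from the interior puncture are present in $R^*_{\overline{Q}}$. Without this observation the face-chopping does not close up.

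There is also a local claim in your Step~2 that is not quite right as stated: a single $\tau^*_i$ does \emph{not} in general "cut off from each adjacent $D$-face a region bounded by at most two $D$-arcs." For a boundary face with $m\geq 4$ arcs, $\tau^*_i$ cuts off one bigon on one side but leaves a region with $m-1$ $D$-arcs on the other; and for an internal face, one arc from the interior puncture to $\tau_i$ does not disconnect the face at all. The subdivision into bigons is a collective effect of all the dual arcs through that face, which is exactly why the paper runs its argument one face of $D_{\overline{Q}}$ at a time rather than one relational vertex at a time. Your flagged concern about quadbutaries is, incidentally, a non-issue once you think face-by-face: the two relations $ba$ and $dc$ at a quadbutary $v$ live in the two different faces flanking $\tau_v$, and $\tau^*_v$ cuts each of them exactly once. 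So your approach can be made to work, but it would need to be reorganised around faces of $D_{\overline{Q}}$ (with the internal-face observation added) to avoid the false intermediate claim.
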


\begin{proof}
Applying the same logic as in Proposition 1.12 in \cite{opper2018geometric} to the locally gentle case, we see that there are two types of faces formed from $D_{\overline{Q}}$: internal faces and those which have one edge along the boundary of $S_{\overline{Q}}$. 

Consider a face which has one edge on the boundary and  $m \geq 1$ edges from $D_{\overline{Q}}$. The corresponding vertex from $V^*$ lies on this boundary edge. If $m = 1$ or $2$, then it is possible that the bounding edge(s) from $D_{\overline{Q}}$ correspond to non-relational vertices from $\overline{Q}$. But if $m \geq 3$, then the edges from $D_{\overline{Q}}$ which don't share a vertex with the boundary component on this face correspond to relational vertices in $\overline{Q}$. If we add the corresponding dual arcs, we will divide this face so that, now, each sub-face is bounded by two arcs from $D_{\overline{Q}}$ and these are consecutive.

Next, consider an internal face. This corresponds to a directed cycle in $Q$ with all subpaths of length $2$ in the ideal $I$; therefore, this is a cycle with no relations in $(Q,Z')$ where $Z'$ is the complement of $Z$ in the set of all length two paths in $Q$. This means that there is a puncture from $V^*$ in the interior of this face and arcs incident to this puncture $\tau^*$ for each $\tau \in D_{\overline{Q}}$ bordering this internal face. For an illustration, see Figure \ref{fig:relation_dissection}, and the face bounded by the arcs $2, 3, 4$ and $5$. Since we add all such $\tau^*$ to $R_{\overline{Q}}^*$, the desired property is preserved. 

\end{proof}

A consequence of our method of breaking up the faces of $D_{\overline{Q}}$ is that every face corresponds to at most one arrow from $\overline{Q}$.

\begin{cor}\label{cor:ArrowsToFaces}
Given a locally gentle pair $\overline{Q}$, there is an injection, $\mathcal{L}$, from the set of arrows in $Q$ to $\mathcal{F}(D_{\overline{Q}} \cup R^*_{\overline{Q}})$.
\end{cor}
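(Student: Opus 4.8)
The plan is to build $\mathcal{L}$ from the standard dictionary between arrows of $Q$ and \emph{corners} of $D_{\overline{Q}}$, and then use \Cref{lem:DivideIntoSmallerFaces} to see that this dictionary descends to an injection into the refined face set $\mathcal{F}(D_{\overline{Q}}\cup R^*_{\overline{Q}})$. Recall from the dissection-to-quiver construction that an arrow $a\in Q_1$ with $t(a)=i$ and $h(a)=j$ is precisely the datum of a marked point $p\in V_{\overline{Q}}$ together with the angular sector at $p$ lying between $\tau_i$ and $\tau_j$, where $\tau_j$ immediately follows $\tau_i$ in clockwise order at $p$; distinct arrows yield distinct such sectors, since the ordered pair of clockwise-consecutive arcs at $p$ is recovered from the sector. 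I would first observe that this sector may be taken to be an embedded open region whose closure meets $D_{\overline{Q}}$ only in initial segments of $\tau_i$ and $\tau_j$, and meets $R^*_{\overline{Q}}$ not at all: arcs of $D^*_{\overline{Q}}$ have both endpoints in $V^*_{\overline{Q}}$, so none emanates from $p$, and since each $\tau^*\in D^*_{\overline{Q}}$ crosses $D_{\overline{Q}}$ in the single point $\tau^*\cap\tau$, one may isotope $R^*_{\overline{Q}}$ (fixing its crossings with $D_{\overline{Q}}$) so that no arc of $R^*_{\overline{Q}}$ enters the sector. Hence the sector of $a$ lies inside a single connected component of $S_{\overline{Q}}\setminus(D_{\overline{Q}}\cup R^*_{\overline{Q}})$, and I define $\mathcal{L}(a)$ to be that face.

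For injectivity, suppose $a,a'\in Q_1$ satisfy $\mathcal{L}(a)=\mathcal{L}(a')=F$. Then $F$ contains the corner sectors of both $a$ and $a'$, so the two arcs $\tau_{t(a)},\tau_{h(a)}$ of $D_{\overline{Q}}$ bounding the sector of $a$ lie on $\partial F$, and likewise $\tau_{t(a')},\tau_{h(a')}$. By \Cref{lem:DivideIntoSmallerFaces}, $F$ is bounded by at most two arcs of $D_{\overline{Q}}$, so (outside the loop cases, where $\tau_{t}=\tau_{h}$) these are the same pair $\{\tau,\tau'\}$, which share an endpoint $p$ lying on $F$, and the sectors of $a$ and $a'$ are each the angular region of $F$ at $p$ between $\tau$ and $\tau'$. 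A clockwise-consecutive ordered pair of arcs at $p$ determines at most one arrow, so it remains only to rule out the degenerate possibility that $\tau,\tau'$ meet $F$ in two distinct corners (because they share both endpoints on $F$, or because $F$ wraps around $p$). In either degenerate case $F$ would be contained in an \emph{internal} face of $D_{\overline{Q}}$ in the sense of the proof of \Cref{lem:DivideIntoSmallerFaces}; but that proof shows each internal face carries an interior puncture from $V^*_{\overline{Q}}$ and is subdivided by the arcs $\tau^*\in R^*_{\overline{Q}}$ joined to it, with every resulting sub-face bounded by a \emph{single} arc of $D_{\overline{Q}}$, contradicting that $F$ has two bounding arcs of $D_{\overline{Q}}$ meeting on $F$. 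Hence the sectors of $a$ and $a'$ coincide, so $a=a'$, and $\mathcal{L}$ is injective.

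The main obstacle I anticipate is making the two geometric claims fully rigorous rather than picture-level: first, that the corner sector of an arrow can be positioned to avoid $R^*_{\overline{Q}}$ entirely, so that $\mathcal{L}$ is well defined; and second, the exhaustive exclusion of the degenerate ``two corners in one face'' configurations, which is precisely where one must invoke the internal-versus-boundary face dichotomy and the subdivision behaviour established inside the proof of \Cref{lem:DivideIntoSmallerFaces}. Both amount to careful bookkeeping of which bounding arcs of a refined face come from $D_{\overline{Q}}$ and which from $R^*_{\overline{Q}}$, and near which marked points; that accounting, together with the loop edge cases, is the part that needs care, but it is in essence a repackaging of \Cref{lem:DivideIntoSmallerFaces}.
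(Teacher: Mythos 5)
Your overall approach matches the paper's intent: the paper offers no explicit proof of this corollary, treating it as an immediate consequence of the subdivision established in \Cref{lem:DivideIntoSmallerFaces}, and you fill in the same line of reasoning (each arrow gives a corner sector lying in a single face of $\mathcal{F}(D_{\overline{Q}}\cup R^*_{\overline{Q}})$, and each such face admits at most one corner). The well-definedness step is fine: since arcs of $R^*_{\overline{Q}}$ have endpoints in $V^*_{\overline{Q}}$, a sufficiently small sector at a point $p\in V_{\overline{Q}}$ avoids them, so no isotopy is actually needed.

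The error is in the exclusion of the degenerate case. You claim that, after adding the dual arcs, the sub-faces of an internal face are bounded by a \emph{single} arc of $D_{\overline{Q}}$, and you use this to contradict the assumption that $F$ has two bounding $D$-arcs. That claim is false except in the loop case. An internal face of $D_{\overline{Q}}$ bounded cyclically by $\tau_1,\dots,\tau_k$ with interior puncture $f^*$ is subdivided by the half-arcs of $\tau_1^*,\dots,\tau_k^*$ emanating from $f^*$ into $k$ sub-faces, and the $i$-th sub-face is bounded by the segments of $\tau_i$ and $\tau_{i+1}$ on either side of their common corner $p_i$, together with two dual half-arcs. So each sub-face is bounded by \emph{two} $D$-arc segments (exactly as \Cref{lem:DivideIntoSmallerFaces} asserts: two arcs sharing an endpoint on the face), and your stated contradiction never arises. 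What the subdivision actually accomplishes is that it isolates the corners: a sub-face sees $\tau_i$ and $\tau_{i+1}$ only near the single corner $p_i$, not near any second shared endpoint. The argument you want is therefore not that $F$ has too few bounding $D$-arcs, but that \Cref{lem:DivideIntoSmallerFaces}, read together with its proof, guarantees the two bounding $D$-arcs of any face of $\mathcal{F}(D_{\overline{Q}}\cup R^*_{\overline{Q}})$ are \emph{consecutive} in the original face, i.e.\ contribute exactly one corner to $F$, which directly rules out the degenerate configurations. With this replacement your proof is sound and follows the paper's route.
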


For example, returning to Figure \ref{fig:relation_dissection}, the face bounded by arcs $\tau_2,\tau_3,\tau_2^*,$ and $\tau_3^*$ would be $\mathcal{L}(\beta)$ for the arrow $\beta$ from $Q$ in Example \ref{running-example:zembyk-excision}. Note that in Figure \ref{fig:dissectionexample}, if we only consider $\mathcal{F}(D_{\overline{Q}})$, then some faces will correspond to multiple arrows from $Q$. For example, the face bounded by $\tau_2,\tau_3,\tau_4$, and $\tau_5$ corresponds to arrows $\beta,\zeta,\epsilon,$ and $\delta$. 

\subsection{Permissible arcs and closed curves}

We will consider arcs with respect to a cellular dissection $D$. The dual dissection $\overline{D}$ will not affect our definition of permissibility. 

We note that when $\overline{Q}$ is gentle, our surface with dissection $D$ resembles that from \cite{Baur-Coelho-Simoes-geometric-model-module-cat-gentle}, except we do not allow the unmarked boundary components. In their place, we retain the points from $V^*$ which lie on the interior of $S$. For simplicity, we keep these points in all internal faces, while in \cite{Baur-Coelho-Simoes-geometric-model-module-cat-gentle} the unmarked boundary components only occur inside internal faces with 1 or 2 bounding arcs. 

\begin{defn}\label{def:PermissibleArc}
Given an arc $\gamma$ on a surface $(S,V\cup V^*)$ with a pair of dual dissections $D$ and $D^*$, choose an orientation of $\gamma$ and let $\rho_1,\ldots,\rho_d$ be the sequence of arcs from $D$ which $\gamma$ crosses. Similarly, let $F_0,\ldots,F_d$ be the sequence of faces of $D$ which $\gamma$ passes through, where $\rho_1$ is on the boundary of $F_0$, $\rho_j$ and $\rho_{j+1}$ are on the boundary of $F_j$ for $1 \leq j \leq d-1$, and $\rho_d$ is on the boundary of $F_d$. We say $\gamma$ is \emph{permissible} if \begin{enumerate}
    \item the endpoints of $\gamma$ are in $V$,
    \item if $F_0$ or $F_d$ contains a puncture from $V^*$, $\gamma$ winds around this puncture at most once, and $\gamma$ does not wind around a puncture from $V^*$ contained in $F_i, 0 < i < d$,
    \item for each $1 \leq j \leq d-1$, the arcs $\rho_j$ and $\rho_{j+1}$ share a common endpoint $p_j$, and
    \item for each $1 \leq j \leq d-1$, the  disk cut out by $\gamma,\rho_j,$ and $\rho_{j+1}$ which contains $p_j$ on its boundary does not include any point of $V^*$.
\end{enumerate}
We say a closed curve $\xi$ is \emph{permissible} if it does not wind around any punctures from $V^*$, satisfies conditions (3) and (4), is not contractible, and does not cut out a $m$-punctured disk with all punctures from $V^*$ for any $m \geq 1$.
\end{defn}

We remark that this definition also resembles the definition of a $D$-accordion in \cite{Palu-Pilaud-Plamondon-non-kissing-non-crossing}.

Our definition of two \emph{equivalent arcs} will exactly match Definition 3.5 of \cite{Baur-Coelho-Simoes-geometric-model-module-cat-gentle}, with the adjustment that their tiles of Type I and Type II are for us internal faces bounded by 1 and 2 arcs respectively, and the unmarked boundary components in the aforementioned work are here replaced by punctures from $V^*$. Note we will later refer to faces of type 1 and 2, which will be distinct from this notion.

There are further considerations when considering which pairs of closed curves will be equivalent.


\begin{defn}\label{def:EquivClosedCurves} (c.f. Br\"{u}stle and Zhang \cite[Proof of Theorem 1.1, p.537]{Brustle-Zhang-cluster-category}). 
Let $S$ be a surface and let $\gamma\colon  [0,1] \to S$ be a closed curve on $S$. 

For an integer $n\geq 1$ we define the $n$-\emph{fold repetition} of $\gamma$ to be the closed curve $\gamma^{n}\colon [0,1]\to S $ defined by  $\gamma^{n}(t)=\gamma(nt-m)$ whenever $t\in[\frac{m}{n},\frac{m+1}{n} ]$ for an integer $m$ with $0\leq m\leq n-1$. 
By a \emph{repetition} of $\gamma$ we mean the $n$-fold repetition for some  $n\geq 1$. 
We say that $\gamma$ is \emph{primitive} if it is not homotopy equivalent to  the $n$-fold repetition of another closed curve for some integer $n\geq 2$; see  \cite[Definition 1.19]{opper2018geometric}.

Define an equivalence relation $\sim$ on the set of closed curves $[0,1]\to S$ by the symmetric closure of the  transitive closure  of the reflexive relation consisting of pairs $\alpha,\beta\colon  [0,1] \to S$ of closed curves such that a repetition of $\alpha$ is homotopy equivalent to a repetition of $\beta$. 
We say closed curves $\gamma,\delta\colon  [0,1] \to S$ are \emph{repetition equivalent} if $\gamma\sim\delta$. 
Hence the equivalence class of $\delta$ with $\gamma\sim \delta$  may be represented by a primitive closed curve. 
\end{defn}

\section{Geometric version of Zembyk's algorithm}\label{sec:geozembyk}
In \S\ref{sec:geozembyk} we provide a geometric description of Zembyk's algorithm from  \S\ref{subsec:zembykalgorithm}. Namely, we realise Zembyk's algorithm on locally gentle pairs as cutting along the arcs in $R^*$ in the surface model. 
We provide two examples: the first, in \Cref{fig:cutting_example_simple}, is more straightforward, to illustrate the procedure; the second, in \Cref{fig:cutting_example_running}, is more complicated, to display potential behaviour.  
For the first example we fix  a surface dissection, together with its dual, in \Cref{fig:cutting-original}.   
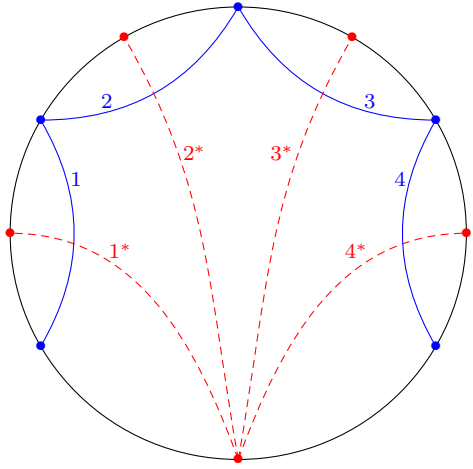
\begin{figure}[H]
	\centering
	\begin{tikzpicture}[font=\tiny, inner sep =.5]
		\draw(0,0) circle[radius=3];
		\begin{scope}[blue]
		\node (a) at (210:3) {$\bullet$};
		\node (b) at (150:3) {$\bullet$};
		\node (c) at (90:3) {$\bullet$};
		\node (d) at (30:3) {$\bullet$};
		\node (e) at (330:3) {$\bullet$};
		\draw (a.center) to [bend right] node[above right, pos=0.7] {$1$} (b.center);
		\draw (b.center) to [bend right] node[above left, pos=0.3] {$2$}(c.center);
		\draw (c.center) to [bend right] node[above right, pos=0.7] {$3$}(d.center);
		\draw (d.center) to [bend right] node[above left, pos=0.3] {$4$}(e.center);
		\end{scope}
		\begin{scope}[red, densely dashed]
		\node (base) at (270:3) {$\bullet$};
		\node (1) at (180:3) {$\bullet$};
		\node (2) at (120:3) {$\bullet$};
		\node (3) at (60:3) {$\bullet$};
		\node (4) at (0:3) {$\bullet$};
		\draw (base.center) to[out=110,in=0] node[above right, pos=0.7]{$1^*$} (1.center);
		\draw (base.center) to[out=100,in=300] node[above right, pos=0.7]{$2^*$} (2.center);
		\draw (base.center) to[out=80,in=240] node[above left, pos=0.7]{$3^*$} (3.center);
		\draw (base.center) to[out=70,in=180] node[above left, pos=0.7]{$4^*$} (4.center);
		\end{scope}
	\end{tikzpicture}
	\caption{The running example of a surface dissection together with its dual; in this case, a dissected polygon.}
  \label{fig:cutting-original}
  \end{figure}
 Consider a (not necessarily connected) surface $(S,M)$ with $M = V \cup V^*$, and a set of dissections $D, R^*$ such that $D$ is a cellular dissection of $(S,V)$ and $R^*$ is a dissection of $(S,V^*)$ such that $R^* \subseteq D^*$ where $D^*$ is the dual dissection to $D$. Let $B \subseteq R^*$. We define the \emph{split of $(S,M,D,R^*)$ with respect to $B$} to be the collection of surfaces, each endowed with a pair of dissections $\{(S_i, V_{D_i}\cup V_{D_i^*},D_i, R_i^*)\}_i$ where for each $i$,
\begin{itemize}
\item  $S_i \in \mathcal{F}(B)$, with boundary given by the union of the boundary of $S_i$ and the bounding arcs of $B$ in this face,
\item $V_{D_i}$ consists of all marked points in $V \cap S_i$ as well as any intersection points $D \cap B$ which lie in $S_i$,
\item $V_{R_i}^*$ consists of all points from $V^*$ which lie in $S_i$ and are incident to at least one arc in $R^* \backslash B$, 
\item $D_i$ is the set of arcs $D$ with non-empty intersection with $S_i$, possibly endowed with an endpoint from $D \cap B$ (formal definition below), and 
\item $R_i^* = (R^* \backslash B) \cap S_i$.
\end{itemize}
In Figure \ref{fig:cutting_example_simple}, we compute and draw the split of the set of dual dissections from \Cref{fig:cutting-original} where $R^* = D^*$ and $B = \{2^*\}$. This example also shows that, even if we start with a pair of dual dissections, the resulting dissections $D_i$ and $R_i^*$ will not be dual. 
In particular, the image of any arc $\tau \in D$ which is crossed by an arc in $B$ in the split will not be crossed by any other arcs.

\begin{figure}[H]
	\begin{subfigure}[b]{0.3\textwidth}   
	\centering
	\begin{tikzpicture}[font=\tiny, inner sep =.5, scale=.7]
		
		\draw (270:3) arc[start angle=270, end angle=120, radius=3];
		\begin{scope}[blue]
			\node (a) at (210:3) {$\bullet$};
			\node (b) at (150:3) {$\bullet$};
			\node (c) at (122:2) {$\bullet$};
		\draw (a.center) to [bend right]  (b.center);
		\draw (b.center) to [bend right] (c.center);
		\end{scope}
		\begin{scope}[red, densely dashed]
		\node (base) at (270:3) {$\bullet$};
		\node (1) at (180:3) {$\bullet$};
		\node (2) at (120:3) {$\bullet$};
		\draw (base.center) to[out=110,in=0] node[above right, pos=0.7]{$1^*$} (1.center);
		\draw (base.center) to[out=100,in=300] node[above right, pos=0.5]{$2^*$} (2.center);
		\end{scope}
	\end{tikzpicture}
 	\caption{Left half after splitting}
  	\end{subfigure}
	\begin{subfigure}[b]{0.3\textwidth}  
	\centering
	\begin{tikzpicture}[font=\tiny, inner sep =.5, scale=.7]
		\draw[lightgray,dotted](0,0) circle[radius=3];
		\draw (270:3) arc[start angle=90, end angle=300, radius=-3];
		\begin{scope}[blue]
		\node (b) at (122:2) {$\bullet$};
		\node (c) at (90:3) {$\bullet$};
		\node (d) at (30:3) {$\bullet$};
		\node (e) at (330:3) {$\bullet$};
		\draw (b.center) to [bend right] (c.center);
		\draw (c.center) to [bend right] (d.center);
		\draw (d.center) to [bend right] (e.center);
		\end{scope}
		\begin{scope}[red, densely dashed]
		\node (base) at (270:3) {$\bullet$};
		\node (2) at (120:3) {$\bullet$};
		\node (3) at (60:3) {$\bullet$};
		\node (4) at (0:3) {$\bullet$};
		\draw (base.center) to[out=100,in=300] node[left, pos=0.6]{$2^*$} (2.center);
		\draw (base.center) to[out=80,in=240] node[left, pos=0.6]{$3^*$} (3.center);
		\draw (base.center) to[out=70,in=180] node[above left, pos=0.7]{$4^*$} (4.center);
		\end{scope}
	\end{tikzpicture}
	\caption{Right half after splitting}
 	\end{subfigure}
  
	\begin{subfigure}[b]{0.3\textwidth} 
	\centering  
	\begin{tikzpicture}[font=\tiny, inner sep =.5,scale=.6]
		\draw(0,0) circle[radius=3];
		\begin{scope}[blue]
			\node (a) at (225:3) {$\bullet$};
			\node (b) at (135:3) {$\bullet$};
			\node (c) at (45:3) {$\bullet$};
		\draw (a.center) to [bend right]  (b.center);
		\draw (b.center) to [bend right] (c.center);
		\end{scope}
		\begin{scope}[red,densely dashed]
		\node (base) at (270:3) {$\bullet$};
		\node (1) at (90:3) {$\bullet$};
		\draw (base.center) to[bend right] node[above right, pos=0.5]{$1^*$} (1.center);
		\end{scope}
	\end{tikzpicture}
	\label{subfig:cutting-new-1}
	\caption{New surface, corresponding to left  hand side}
 	\end{subfigure}
	\begin{subfigure}[b]{0.3\textwidth}   
	\centering
	\begin{tikzpicture}[font=\tiny, inner sep =.5, scale=.6]
		\draw (0,0) circle[radius=3];
		\begin{scope}[blue]
		\node (b) at (210:3) {$\bullet$};
		\node (c) at (135:3) {$\bullet$};
		\node (d) at (45:3) {$\bullet$};
		\node (e) at (330:3) {$\bullet$};
		\draw (b.center) to [bend right] (c.center);
		\draw (c.center) to [bend right] (d.center);
		\draw (d.center) to [bend right] (e.center);
		\end{scope}
		\begin{scope}[red, densely dashed]
		\node (base) at (270:3) {$\bullet$};
		\node (3) at (90:3) {$\bullet$};
		\node (4) at (0:3) {$\bullet$};
		\draw (base.center) to[bend left] node[left, pos=0.6]{$3^*$} (3.center);
		\draw (base.center) to[bend left] node[above left, pos=0.7]{$4^*$} (4.center);
		\end{scope}
	\end{tikzpicture}
		\label{subfig:cutting-new-2}
	\caption{New surface, corresponding to right  hand side}
 	\end{subfigure}
 	\caption{Split of the surface in \Cref{fig:cutting-original} with respect to one arc, $2^*$.}
  \label{fig:cutting_example_simple}
\end{figure}
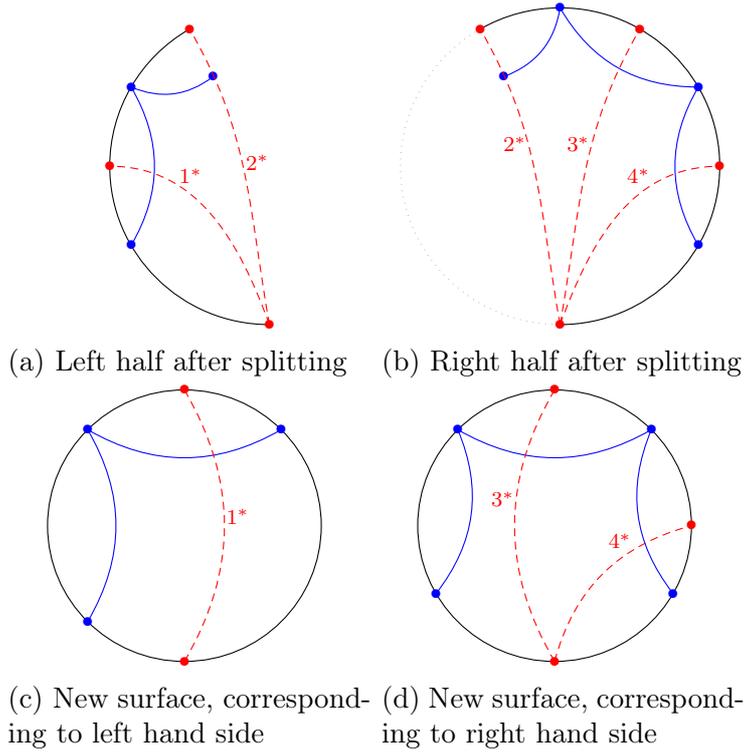

Next we provide a formal description of the split of the dissection $D$. 

\emph{Formal description of $D_i$. }
All arcs from $D$ which lie entirely in $S_i$ are in $D_i$. Furthermore, 
for each $\tau_v^* \in B \subseteq D^*$, the arc $\tau_v$ will be cut up in the splitting. Considering $\tau_v$ formally $\tau_v\colon  [0,1] \to S$, we choose a parametrization such that the intersection point $\tau_v \cap \tau_v^*$ occurs at $\tau_v(\frac12)$. Suppose that $\tau_v^*$ becomes a boundary edge for connected components $S_i$ and $S_j$ where $\tau_v(0) \in S_i$ and $\tau_v(1) \in S_j$ (note that it is possible that $S_i = S_j$). Then, if we define $\tau_v^{i}\colon  [0,1] \to S_i$ by $\tau_v^i(t) = \tau_v(\frac{t}{2})$ and $\tau_v^{j}\colon [0,1] \to S_j$ by $\tau_v^j(\frac{t}{2} + \frac12)$ we include $\tau_v^i \in D_i$ and $\tau_v^j \in D_j$. Note that the points $\tau_v^i(\frac12)$ and $\tau_v^j(\frac12)$ are included in $V_{D_i}$ and $V_{D_j}$ respectively. See for example the intersection between arcs $\tau_2$ and $\tau_{2^*}$ in Figure \ref{fig:cutting_example_simple}.

\Cref{fig:cutting_example_running} shows the result of splitting the marked surface with dissection $(S_{\overline{Q}}, V_{\overline{Q}} \cup V^*_{\overline{Q}}, D_{\overline{Q}}, R^*_{\overline{Q}})$ from Figure \ref{fig:relation_dissection} with $R^* = B$.

\begin{figure}[H]
	\begin{subfigure}[c]{0.24\textwidth}   
	\begin{tikzpicture}[font=\tiny, inner sep =.5]
		\draw[rounded corners] (0,1) -- (0,0) -- (3,0) -- (3,2) --(0,2) --(0,1);
		\begin{scope}[blue]
			\node[label=left:$a$] (a) at (0,1) {$\bullet$};
			\node (b) at (1,0) {$\bullet$};
			\node[label=below:$c$] (c) at (1.7,1.2) {$\bullet$};
			\node (d) at (3, 1.7) {$\bullet$};
			\draw (c.center) to[bend right] node[above] {1}  (a.center);
			\draw (c.center) to[bend right] node[below right] {2} (d.center);
			\draw (c.center) to[bend right] node[above left] {3} (b.center);
		\end{scope}
	\end{tikzpicture}
	\caption{}
 	\end{subfigure}
	\begin{subfigure}[c]{0.24\textwidth}   
	\begin{tikzpicture}[font=\tiny, inner sep =.5]
		\draw[rounded corners] (0,1) -- (0,0) -- (3,0) -- (3,2) --(0,2) --(0,1);
		\begin{scope}[blue]
			\node[label=below:$b$] (b) at (1,0) {$\bullet$};
			\node (c) at (1.5,2) {$\bullet$};
			\node (e) at (3,1) {$\bullet$};
			\draw (c.center) to[bend right] node[above left] {3} (b.center);
			\draw (e.center) to[bend right] node[above left] {4} (b.center);
		\end{scope}
	\end{tikzpicture}
 	\caption{}
  	\end{subfigure}
	\begin{subfigure}[c]{0.24\textwidth}   
	\begin{tikzpicture}[font=\tiny, inner sep =.5]
		\draw[rounded corners] (0,1) -- (0,0) -- (3,0) -- (3,2) --(0,2) --(0,1);
		\begin{scope}[blue]
			\node[label=above:$d$] (d) at (1.5,2) {$\bullet$};
			\node (c) at (0,.5) {$\bullet$};
			\node (e) at (2,0) {$\bullet$};
			\draw (c.center) to[bend right] node[below right] {2} (d.center);
			\draw (e.center) to[bend right] node[above right] {5} (d.center);
		\end{scope}
	\end{tikzpicture}
	\caption{}
 	\end{subfigure}
	\begin{subfigure}[c]{0.24\textwidth}   
	\begin{tikzpicture}[font=\tiny, inner sep =.5]
		\draw[rounded corners] (0,1) -- (0,0) -- (3,0) -- (3,2) --(0,2) --(0,1);
		\begin{scope}[blue]
			\node (b) at (0,.5) {$\bullet$};
			\node (d) at (1,2) {$\bullet$};
			\node[label=below:$e$] (e) at (1.5,0) {$\bullet$};
			\node[label=right:$f$] (f) at (3,1.5) {$\bullet$};
			\draw (e.center) to[bend right] node[above right] {4} (b.center);
			\draw (e.center) to[bend right] node[above right] {5} (d.center);
			\draw (e.center) to[bend right] node[below right] {6} (f.center);
		\end{scope}
	\end{tikzpicture}
		\caption{}
 	\end{subfigure}
 	\caption{The result of cutting up the surface in \Cref{fig:relation_dissection} along the arcs in $R^*_{\overline{Q}}$. For the reader's convenience, we include labels of the original marked points. }
    \label{fig:cutting_example_running}
\end{figure}
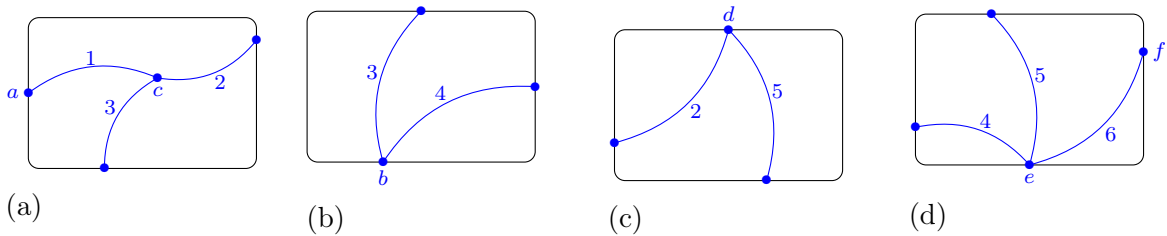

Our goal will be to compare Zembyk's algorithm applied to a locally gentle pair $\overline{Q} = (Q,Z)$ with the effect of splitting $(S_{\overline{Q}}, V_{\overline{Q}}\cup V^*_{\overline{Q}}, D_{\overline{Q}}, R^*_{\overline{Q}})$ with respect to $R^*_{\overline{Q}}$. We check first that taking the levee of $(Q,Z)$ at one relational vertex $v$ is consistent with splitting $(S_{\overline{Q}}, V_{\overline{Q}}\cup V^*_{\overline{Q}}, D_{\overline{Q}}, R^*_{\overline{Q}})$ at $\tau_v^*$.

\begin{lem}\label{lem:SplitAtOneEquivToZembyk}
Let $\overline{Q} = (Q,Z)$ be a locally gentle pair and let $v$ be a relational vertex in $\overline{Q}$. Let $\overline{Q}^v = (Q^v,Z^v)$ denote the levee of $(Q,V)$ at $v$. Then, the split of $(S_{\overline{Q}},V_{\overline{Q}} \cup V_{\overline{Q}}^*, D_{\overline{Q}}, R_{\overline{Q}}^*)$ with respect to $\tau_v^*$ is equivalent to $(S_{\overline{Q^v}},V_{\overline{Q^v}} \cup V_{\overline{Q^v}}^*, D_{\overline{Q^v}}, R_{\overline{Q^v}}^*)$, the surface  corresponding $\overline{Q}^v$, up to homotopy.
\end{lem}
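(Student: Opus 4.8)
The plan is to exploit that the Palu--Pilaud--Plamondon assignment $\overline{Q}\mapsto(S_{\overline{Q}},D_{\overline{Q}})$ and the recovery procedure $(S,D)\mapsto\overline{Q}_D$ recalled above are mutually inverse up to the relevant equivalences. Thus it is enough to run the split dissection through the recovery procedure and check that the resulting locally gentle pair is the levee $\overline{Q}^v=(Q^v,Z^v)$, and separately that the dual arcs surviving the split form $R^*_{\overline{Q^v}}$. Since the surface attached to a locally gentle pair is determined up to homeomorphism, this identifies the split of $(S_{\overline{Q}},V_{\overline{Q}}\cup V^*_{\overline{Q}},D_{\overline{Q}},R^*_{\overline{Q}})$ at $\tau_v^*$ with $(S_{\overline{Q^v}},V_{\overline{Q^v}}\cup V^*_{\overline{Q^v}},D_{\overline{Q^v}},R^*_{\overline{Q^v}})$ up to homotopy.

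First I would observe that cutting along the single arc $\tau_v^*$ is a local operation: outside a neighbourhood of $\tau_v\cup\tau_v^*$ the surface, its arcs, and the incidences between arcs and faces are unchanged, so the recovered quiver agrees with $Q$ at every vertex $u\neq v$, at every arrow not incident to $v$, and at every relation $mn\in Z$ with $v\neq t(m)$ and $v\neq h(n)$. This already reproduces the parts of $Q^v_0$, of the arrows $a^v$, and of $Z^v$ that are unaffected in \Cref{defn-levee}; it remains to analyse the picture near $\tau_v$.

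The core step is the local case analysis at $\tau_v$, organised by whether $v$ is a stream, a tributary, a distributary or a quadbutary (\Cref{defn-relational}). In each case I would record the two endpoints of $\tau_v$; the arcs of $D_{\overline{Q}}$ sharing an endpoint with $\tau_v$, which by the recovery rule are precisely the arcs $\tau_{h(x)},\tau_{t(y)}$ for the arrows $x,y$ incident to $v$, arranged in the clockwise cyclic order encoding those arrows; and the faces of $D_{\overline{Q}}\cup R^*_{\overline{Q}}$ meeting $\tau_v$, of which there are at most two by \Cref{lem:DivideIntoSmallerFaces}. Cutting along $\tau_v^*$ severs $\tau_v$ at the crossing point $\tau_v(\frac12)$ into two arcs $\tau_v^i,\tau_v^j$, introduces the new $V$-marked endpoint $\tau_v^i(\frac12)=\tau_v^j(\frac12)$ on the new boundary, and separates the faces through which $\tau_v^*$ ran. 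Reading off the recovered data, one then checks: $\tau_v^i$ and $\tau_v^j$ become the arcs of two new vertices; each arrow incident to $v$ is reattached to whichever of $\tau_v^i,\tau_v^j$ carries the relevant endpoint of $\tau_v$, and this is exactly the rule $t^v(b^v)=v(\sharp)=h^v(c^v)$, $t^v(d^v)=v(\flat)=h^v(a^v)$ of \Cref{defn-levee}; and the face of $D_{\overline{Q}}$ witnessing $ba\in Z$, bordered by $\tau_{h(b)},\tau_v,\tau_{t(a)}$, is cut by $\tau_v^*$ so that $\tau_{h(b)}$ and $\tau_{t(a)}$ end up on different pieces, whence precisely the relations $ba$ (and $dc$ in the quadbutary case) disappear, matching $Z^v$. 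Finally $R^*_{\overline{Q^v}}=R^*_{\overline{Q}}\setminus\{\tau_v^*\}$ follows from part (ii) of \Cref{lem:ZembykBasics} (the new vertices are not relational) together with the elementary fact that a vertex $u\neq v$ is relational in $Q^v$ if and only if $u$ is relational in $Q$, so that the arcs of $D^*_{\overline{Q^v}}$ dual to relational vertices are exactly the leftover arcs $R_i^*$ of the split.

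The main obstacle will be the bookkeeping within this case analysis: matching the clockwise cyclic order of arcs around a marked point (which governs the orientation of arrows in the recovery procedure) with the $\sharp/\flat$ labelling of \Cref{defn-levee}, including the ambiguity that both $Q^v$ and the split are only defined up to interchanging the two new objects, and handling the degenerate configurations flagged after \Cref{defn-relational}: $a=b$ or $a=d$, coincidences among the remaining endpoints of the incident arcs, and the possibility that $\tau_v$ borders the same face on both sides, so that $\tau_v^*$ is a loop at a puncture and the split may fail to disconnect the surface. In each such case one must still verify that the cut produces a genuine marked surface with dissection and that the recovered pair is $\overline{Q}^v$; this is where \Cref{lem:DivideIntoSmallerFaces} and the explicit parametrisation of the cut arcs in the formal description of $D_i$ are genuinely used, though the verification is otherwise routine.
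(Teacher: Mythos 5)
Your proposal is correct and follows essentially the same route as the paper: a local case analysis at $\tau_v$ (organised by whether $v$ is a stream, tributary, distributary or quadbutary), verifying that the split at $\tau_v^*$ reproduces the levee locally, combined with the observation that $\tau_v^*$ crosses no other arc of $D_{\overline{Q}}$ so the remainder of the surface and quiver is untouched. The paper carries out only the quadbutary case with pictures and asserts the others are similar but simpler, while you flag more of the bookkeeping (cyclic order vs.\ $\sharp/\flat$, degenerate coincidences like $a=b$, and the explicit check that $R^*_{\overline{Q^v}}=R^*_{\overline{Q}}\setminus\{\tau_v^*\}$), but these are elaborations of the same argument rather than a different one.
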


\begin{proof}
We only show the proof in the case where $v$ is a quadbutary vertex, as the proof in the other cases is similar, but simpler.
If $v$ is a quadbutary vertex, then the local configuration of $\overline{Q}$ near $v$ looks like 
\[
\begin{tikzcd}
{} & {} & {}\ar[dl, "a"name=alp]\\
{} & v\ar[ul, "b"name=bet]\ar[dl, "d"name=betp, swap] & {}\\
{} & {} & {}\ar[ul,  "c"name=alpp, swap]
\ar[-, shorten <= 3pt, shorten >= 3pt, from=alp, to=bet, bend right = 60, dashed]
\ar[-, shorten <= 3pt, shorten >= 3pt, from=alpp, to=betp, bend left = 60, dashed]
\end{tikzcd} 
\]

and in the levee we see 
\[
\begin{tikzcd}[column sep = 1.9em]
{} & v(\sharp) \ar[l, swap, "b^{v}"name=bet] & {}\ar[l, swap,  "c^{v}"name=alpp]\\
{} & v(\flat) \ar[l, swap, "d^{v}"name=betp] & {}\ar[l, swap,  "a^{v}"name=alp]
\end{tikzcd} 
\]
 
Similarly, $D_{\overline{Q}}$ and $R^*_{\overline{Q}}$ near $\tau_v$ and $\tau_v^*$ looks as the following. 

\begin{center}
\begin{tikzpicture}[scale=2]
	\begin{scope}[blue]
	\node (lhs) at (0,0) {$\bullet$};
	\node (rhs) at (3,0) {$\bullet$};
	\draw (lhs.center) to node (v) {} node[above left, pos=0.7] {$\tau_v$} (rhs.center);
	\draw (lhs.center) to[bend right] node[pos=0.7] (1) {} +(0.8,1);
	\draw (lhs.center) to[bend left] node[pos=0.7]  (2) {} +(0.8,-1);
	\draw (rhs.center) to[bend left] node[pos=0.7]  (3) {} +(-0.8,1);
	\draw (rhs.center) to[bend right] node[pos=0.7]  (4) {} +(-0.8,-1);
	\end{scope}

	\draw[<-] (v.north west) to[bend right] node[above right] {$a$} (1);
	\draw[<-] (2) to[bend right] node[below right] {$d$} (v.south west);
	\draw[<-] (3) to[bend right] node[above left] {$b$} (v.north east);
	\draw[<-] (v.south east) to[bend right] node[below left] {$c$} (4);
 	\draw[red,densely dashed]  (v.center) to node[right, pos=]{$\tau_v^*$}  +(0,1cm);
 	\draw[red,densely dashed]  (v.center) to +(0,-1cm);
\end{tikzpicture}
\end{center}

The effect of splitting $(S_{\overline{Q}},V_{\overline{Q}} \cup V^*_{\overline{Q}}, D_{\overline{Q}}, R^*_{\overline{Q}})$ with respect to $\tau_v^*$ near $\tau_v$ looks as the following, where we use superscripts of $i$ and $j$ to  denote which component each piece is in. Note that it is possible that $i = j$.

\begin{center}
\begin{tikzpicture}[scale=2]
	\draw (1.3, -1) -- (1.3,1);
	\draw (1.7, -1) -- (1.7,1);
	
	\begin{scope}[blue]
	\node (lhs) at (0,0) {$\bullet$};
	\node (ml) at (1.3,0) {$\bullet$};
	\node (mr) at (1.7,0) {$\bullet$};
	\node (rhs) at (3,0) {$\bullet$};
	\draw (lhs.center) to node[pos=0.9] (v1) {} node[below, pos=0.5] {$\tau_v^i$} (ml.center);
	\draw (mr.center)  to node[pos=0.1] (v2) {} node[below, pos=0.5] {$\tau_v^j$} (rhs.center);
	\draw (lhs.center) to[bend right] node (1) {} +(0.8,1);
	\draw (lhs.center) to[bend left] node (2) {} +(0.8,-1);
	\draw (rhs.center) to[bend left] node (3) {} +(-0.8,1);
	\draw (rhs.center) to[bend right] node (4) {} +(-0.8,-1);
	\end{scope}

	\draw[<-] (v1.north west) to[bend right] node[above] {$a^i$} (1);
	\draw[<-] (2) to[bend right] node[below] {$d^i$} (v1.south west);
	\draw[<-] (3) to[bend right] node[above] {$b^j$} (v2.north east);
	\draw[<-] (v2.south east) to[bend right] node[below] {$c^j$} (4);
\end{tikzpicture}
\end{center}

One can see that the effect of the levee of $\overline{Q}$ locally at $v$ matches the effect of the split locally at $\tau_v$. Since the levee does not affect any other vertices or arrows, and since $\tau_v^*$ does not cross any other arcs in $D_{\overline{Q}}$, we can conclude that these agree for the whole quiver and surface.

\end{proof}

The following is clear since any pair of distinct arcs from $D^*$ do not cross. This is in fact a geometric version of \cref{lem:ZembykOrderDoesntMatter}. 

\begin{lem}\label{lem:GeometricZembykOrderDoesntMatter}
Given a surface $(S, V \cup V^*)$ with a pair of dual dissections $D,D^*$, for any pair of  arcs $\tau_v^*, \tau_w^* \in D^*$, splitting the surface at $\tau_v^*$ and then $\tau_w^*$ results in the same set of dissected surfaces as splitting in the opposite order i.e. splitting the surface at $\tau_w^*$ and then $\tau_v^*$. 
\end{lem}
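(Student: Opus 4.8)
The plan is to show that each of the two iterated splits agrees with the single split of $(S,V\cup V^*,D,D^*)$ along $B=\{\tau_v^*,\tau_w^*\}$; order-independence is then immediate. The one input that matters is that $D^*$ is a dissection, so the distinct arcs $\tau_v^*$ and $\tau_w^*$ are compatible, $e(\tau_v^*,\tau_w^*)=0$; fix representatives realising this, so that $\tau_v^*$ and $\tau_w^*$ meet, if at all, only at common endpoints in $V^*$. Recall that the split along an arc is topologically the operation of cutting the surface open along that arc, together with the attendant bookkeeping of marked points and (possibly truncated) arcs; the lemma is the assertion that cutting along two non-crossing arcs may be done in either order.

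First I would observe that after splitting along $\{\tau_v^*\}$ the arc $\tau_w^*$ is undisturbed and lies in a single connected component $S_k$ of the result: if $\overline{\tau_v^*}\cap\overline{\tau_w^*}=\varnothing$ this is clear, and if the two share an endpoint $p\in V^*$ then cutting along $\tau_v^*$ separates the two sides of $\tau_v^*$ near $p$, while the germ of $\tau_w^*$ at $p$ lies on one definite side because $\tau_w^*$ does not cross $\tau_v^*$. Hence the second split, along $\tau_w^*\in R_k^*$, modifies only $S_k$, and the two-step procedure cuts $S$ open along $\tau_v^*\cup\tau_w^*$: away from a possible shared endpoint one uses disjoint tubular neighbourhoods of the two arcs, and near such an endpoint one uses that the arcs leave $p$ in distinct directions without crossing; in both regions the simultaneous cut and the sequential cut yield the same surface-with-arcs. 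Exchanging the roles of $v$ and $w$ shows that splitting first along $\tau_w^*$ and then along $\tau_v^*$ likewise realises the split along $\{\tau_v^*,\tau_w^*\}$, so the two orders coincide.

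Finally I would check that the combinatorial data $V_{D_i}$, $V_{R_i^*}$, $D_i$, $R_i^*$ attached to each component agrees in both cases. This is forced by the definition of the split, since each of these sets is determined locally: $V_{D_i}$ collects the points of $V$ in $S_i$ together with the intersection points $D\cap\{\tau_v^*,\tau_w^*\}$ landing in $S_i$; $V_{R_i^*}$ collects the points of $V^*$ in $S_i$ incident to some arc of $D^*\setminus\{\tau_v^*,\tau_w^*\}$; and $D_i$, $R_i^*$ collect the (truncated) arcs of $D$ and of $D^*\setminus\{\tau_v^*,\tau_w^*\}$ meeting $S_i$. None of this refers to an ordering of the two cuts. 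The only genuinely delicate point is the shared-endpoint case above --- confirming that the local cut picture near a common endpoint $p\in V^*$ of $\tau_v^*$ and $\tau_w^*$ is independent of which arc is cut first --- which is the main, if routine, obstacle; elsewhere the two cuts take place in disjoint neighbourhoods and patently commute. When $(S,V\cup V^*,D,D^*)$ arises from a locally gentle pair one may alternatively combine \Cref{lem:SplitAtOneEquivToZembyk} with \Cref{lem:ZembykOrderDoesntMatter}.
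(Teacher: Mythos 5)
The paper itself offers no displayed proof: it records only the one-line observation that the claim ``is clear since any pair of distinct arcs from $D^*$ do not cross.'' Your proposal is correct and expands exactly that observation — using compatibility of $\tau_v^*$ and $\tau_w^*$ to reduce both orderings to the single split along $\{\tau_v^*,\tau_w^*\}$, with appropriate care at a shared endpoint — so it is essentially the same approach, just spelled out in detail.
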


Since we use the correspondence between gentle algebras and cellular dissections from \cite{Palu-Pilaud-Plamondon-non-kissing-non-crossing}, we verify that the split dissections remain cellular. 

\begin{lem}
Given a surface $(S,V\cup V^*)$ with a pair of dual, cellular dissections $D$ and $D^*$, every dissection $D_i$  in the split with respect to $B \subseteq D^*$ is a cellular dissection of $(S_i,V_i)$.
\end{lem}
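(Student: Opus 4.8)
The plan is to identify the faces of the split dissection $D_i$ on $S_i$ with the connected components of $S\setminus(D\cup B)$. Cutting $S$ first along $B$ and then, in each resulting piece $S_i$, along $D_i$ amounts to cutting $S$ along $D\cup B$ all at once; here $D\cup B$ is not a dissection in the strict sense, since an arc of $D$ may meet its dual arc in $B$, but the connected components of $S\setminus(D\cup B)$ are still well defined, and each of them lies in a unique $S_i$. Hence it suffices to show that every such component is homeomorphic to a disk. Along the way one records the routine verifications that each $D_i$ really is a dissection of $(S_i,V_i)$: its arcs are pairwise compatible (compatibility is inherited from $D$, and the newly created ``half-arcs'' $\tau_v^{i}$ arising when $\tau_v^*\in B$ meet no other arc), their endpoints lie in $V_i$ because $V_i$ was defined to contain every intersection point of $D$ with $B$, and their interiors avoid $\partial S_i\setminus V_i$.

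Fix a component $G$ of $S\setminus(D\cup B)$. Since $G$ is disjoint from $D$, it lies in a single face $\tilde G\in\mathcal F(D)$, and $\tilde G$ is a disk because $D$ is cellular. Thus $G$ is a connected component of $\tilde G\setminus B$, so one only needs to understand $B\cap\tilde G$. As arcs of $D^*$ — hence of $B$ — are pairwise non-crossing, $B\cap\tilde G$ is a disjoint union of embedded arcs in the disk $\tilde G$. The key point is a restriction on how these arcs can sit: I claim that, for every $\tau^*\in B$, each component of $\tau^*\cap\tilde G$ is an arc with at most one endpoint in the interior of $\tilde G$. Granting this, we cut the disk $\tilde G$ along these finitely many disjoint arcs one at a time; cutting a disk along a properly embedded arc with both endpoints on the boundary yields two disks, and cutting along an arc from a boundary point to an interior point yields a disk. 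Hence every component of $\tilde G\setminus B$, and with it every face of every $D_i$, is a disk, which is the assertion.

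The claim is where the defining property of a pair of dual dissections is used, namely that each $\tau^*\in D^*$ crosses exactly one arc $\tau\in D$ and is disjoint from all the others. Consequently $\tau^*$ can enter at most the two faces of $D$ adjacent to $\tau$, and it carries one of its two endpoints into each of them; so $\tau^*\cap\tilde G$ is empty unless $\tau$ bounds $\tilde G$, in which case it is a single arc from the crossing point $\tau^*\cap\tau\in\partial\tilde G$ to one endpoint of $\tau^*$. That endpoint is a marked point of $V^*$, and it lies in the interior of $\tilde G$ only when it is a puncture; in particular it contributes at most one interior endpoint, as claimed. If instead $\tau^*$ had both of its endpoints interior to a single face $\tilde G$, it could not reach $\tau$ at all while crossing it only once — and this is exactly the configuration whose cutting would produce an annulus rather than a disk, so excluding it is the heart of the argument.

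The main obstacle is precisely this last step: ruling out that an arc of $B$ to be cut has both of its endpoints interior to a single face of $D$. Everything else is bookkeeping about the cut-and-reassemble operations and the inheritance of compatibility, together with the elementary fact that cutting a disk along disjoint arcs of the two permitted types again produces disks. One may alternatively invoke the fact that the order of splitting is immaterial to reduce to splitting along a single arc and then argue that cellularity is preserved at each step, but the obstruction encountered is the same.
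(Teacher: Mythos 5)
Your proof is correct, and it takes a genuinely different route from the paper's. The paper argues one cut at a time: it shows that splitting along a single $\tau^*\in B$ preserves cellularity, splitting into cases according to whether the endpoints of $\tau^*$ are punctures or lie on $\partial S$. When at least one endpoint is on the boundary, the split does not create a new boundary component; when both are punctures, a new boundary component is created, and the paper observes that the two punctures sit in distinct faces of $D$, so each such face merely acquires new boundary arcs and remains a disk. You instead analyze the final decomposition all at once, identifying the faces of each $D_i$ with the connected components of $S\setminus(D\cup B)$ and showing each is a disk by examining how $B$ meets a fixed face $\tilde G\in\mathcal{F}(D)$. The crucial insight, shared by both arguments, is that because a dual arc $\tau^*$ crosses exactly one arc $\tau\in D$ and no others, any piece of $\tau^*$ lying in a face $\tilde G$ must reach $\partial\tilde G$ at the crossing with $\tau$; hence cutting the disk-face $\tilde G$ along $\tau^*\cap\tilde G$ cannot produce an annulus. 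Your version avoids the induction and makes the duality hypothesis do transparent work, while the paper's version is perhaps more visual (and sidesteps the bookkeeping identification of faces across the cut) at the cost of relying on the unargued assertion that the two puncture-endpoints of $\tau^*$ lie in distinct faces. One small caveat to your write-up: you phrase the restriction as ``$\tau^*\cap\tilde G$ is a single arc from the crossing point to one endpoint of $\tau^*$,'' which implicitly assumes $\tau$ bounds $\tilde G$ on one side only and that $\tau^*$ has two distinct endpoints; if $\tau$ borders $\tilde G$ on both sides (equivalently, $\tau^*$ is a loop), the intersection is not of this form, though the conclusion that cutting still yields disks survives, and the cellularity of both $D$ and $D^*$ in fact tends to exclude this configuration.
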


\begin{proof}
We show that splitting at any arc $\tau^*$ does not create a non-cellular component. If at least one of the endpoints of $\tau^*$ is not a puncture, then splitting at $\tau^*$ does not create a boundary component that is not connected to an existing boundary component. However, if both endpoints of $\tau^*$ are punctures, as below, splitting at $\tau^*$ creates a new boundary component.

\begin{center}
\begin{tabular}{cc}
    \begin{tikzpicture}
   \draw[blue] (0,0) -- (2,0) -- (2,2) -- (0,2) -- (0,0);
   \draw[blue] (2,0) -- (4,0) -- (4,2) -- (2,2);
   \node[blue] at (0,0) {$\bullet$};
   \node[blue] at (2,0) {$\bullet$};
   \node[blue] at (2,2) {$\bullet$};
   \node[blue] at (0,2) {$\bullet$};
   \node[blue] at (4,2) {$\bullet$};
    \node[blue] at (4,0) {$\bullet$};
    \draw[dashed,red] (1,1) -- (3,1);
    \node[red] at (1,1) {$\bullet$};
    \node[red] at (3,1){$\bullet$};
    \node[red] at (2.5,1.3){$\tau_v^*$};
   \end{tikzpicture}
   &
       \begin{tikzpicture}
   \draw[blue] (2,2) -- (0,2) --  (0,0) -- (2,0);
   \draw[blue] (2,0) -- (4,0) -- (4,2) -- (2,2);
   \node[blue] at (0,0) {$\bullet$};
   \node[blue] at (2,0) {$\bullet$};
   \node[blue] at (2,2) {$\bullet$};
   \node[blue] at (0,2) {$\bullet$};
   \node[blue] at (4,2) {$\bullet$};
    \node[blue] at (4,0) {$\bullet$};
    \node[blue] at (2,0.7){$\bullet$};
    \node[blue] at (2,1.28){$\bullet$};
    \draw[blue] (2,0) -- (2,0.7);
    \draw[blue] (2,2) -- (2,1.28);
    \node[] at (1,1) {$\bullet$};
    \node[] at (3,1){$\bullet$};
    \draw (1,1) to [out = 30, in = 150] (3,1);
    \draw (1,1) to [out = -30, in = 210] (3,1);
   \end{tikzpicture}\\
\end{tabular}
\end{center}

These two punctures are necessarily in two different faces in $\mathcal{F}(D)$, and when we split at $\tau^*$ this will simply add boundary edges to this face, as depicted below. In particular, each such face will remain contractible after the split, proving that the new dissections $D_i$ in each component are cellular. 

\end{proof}

Now, we show that splitting $(S_{\overline{Q}}, V_{\overline{Q}} \cup V_{\overline{Q}}^*, D_{\overline{Q}}^*, R^*_{\overline{Q}})$ with respect to $R^*_{\overline{Q}}$ produces a set of dissections which correspond to the connected components of $Q^{\zembyk}(Z)$. 

\begin{thm}\label{thm:SplitVsZembyk}
Let $\overline{Q}$ be a locally gentle pair with corresponding dissection $D_{\overline{Q}}$ of $(S_{\overline{Q}}, V_{\overline{Q}})$  Let $(S_i,V_{D_i} \cup ,D_i)$ be the components of the split of $(S_{\overline{Q}}, V_{\overline{Q}} \cup V_{\overline{Q}}^*, D_{\overline{Q}}^*, R^*_{\overline{Q}})$ along $R^*_{\overline{Q}}$. Then, the quivers $Q_{D_i}$ associated to these connected components exactly correspond to the connected components of $Q^{\zembyk}(Z)$.
\end{thm}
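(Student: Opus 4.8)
The plan is to reduce the statement to the iterated version of \Cref{lem:SplitAtOneEquivToZembyk}. Recall that $Q^{\zembyk}(Z)$ is built by a finite sequence of levees at relational vertices $v(0),v(1),\dots,v(n-1)$, terminating when the relation set becomes empty (\Cref{defn-zembyk-quiver-scissors}), and that by \Cref{lem:ZembykBasics} each levee strictly decreases $|Z|$ while the new vertices created are non-relational. On the geometric side, the split along $R^*_{\overline{Q}}$ is, by construction, obtained by cutting simultaneously along all arcs $\tau_v^* \in R^*_{\overline{Q}}$, and these are in bijection with the relational vertices of $\overline{Q}$ by the very definition of $R^*_{\overline{Q}}$. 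Since distinct arcs of $D^*$ do not cross, \Cref{lem:GeometricZembykOrderDoesntMatter} lets us perform these cuts one at a time in any order and get the same result; similarly \Cref{lem:ZembykOrderDoesntMatter} says the order of levees is irrelevant. So first I would fix a common ordering: split at $\tau_{v(0)}^*$, then at the image of $\tau_{v(1)}^*$, and so on.

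The key step is an induction on the number of relational vertices, with inductive hypothesis: after splitting at the first $m$ arcs, the resulting (disjoint union of) dissected surfaces is homotopy-equivalent to the surface-with-dissection associated to the locally gentle pair $(Q^{(m)},Z^{(m)})$ obtained after $m$ levees. The base case $m=0$ is trivial. For the inductive step, I would apply \Cref{lem:SplitAtOneEquivToZembyk} to the pair $(Q^{(m)},Z^{(m)})$ at the relational vertex $v(m)$: splitting its associated surface at $\tau_{v(m)}^*$ yields the surface of $(Q^{(m+1)},Z^{(m+1)})$ up to homotopy. One must check that $\tau_{v(m)}^*$ is precisely the arc in $R^*_{\overline{Q}}$ (or rather its image after the first $m$ cuts) being removed at stage $m+1$, and that the arc has not been affected by the earlier cuts — this follows because $\tau_{v(m)}^*$ crosses only $\tau_{v(m)}$ among the arcs of $D$, and earlier splits only added endpoints to arcs that were crossed by earlier $\tau^*$'s, never touching $\tau_{v(m)}^*$ itself. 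When $m=n$ the relation set is empty, $R^*$ has been exhausted, and $(Q^{(n)},Z^{(n)}) = (Q^{\zembyk}(Z),\emptyset)$; taking connected components on both sides and using that the quiver recovered from a cellular dissection (via the three-step recipe before \Cref{lem:DivideIntoSmallerFaces}) is functorial with respect to disjoint unions gives the claimed bijection $Q_{D_i} \leftrightarrow$ components of $Q^{\zembyk}(Z)$.

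There are two bookkeeping points that need care. First, one must confirm that the surface-with-dissection associated to a \emph{disconnected} locally gentle pair is the disjoint union of those of its connected components, so that iterating \Cref{lem:SplitAtOneEquivToZembyk} (which is stated for the surface of a single locally gentle pair, possibly becoming disconnected) makes sense at each stage; this is immediate from the local nature of the Palu--Pilaud--Plamondon construction but should be stated. Second, after a split the dissections $D_i$ and $R_i^*$ are no longer dual (as noted in the text after \Cref{fig:cutting-original}), so one must be slightly careful that the recipe recovering $Q_{D_i}$ from $D_i$ only uses $D_i$ and not its dual — which is exactly how the recipe before \Cref{lem:DivideIntoSmallerFaces} is phrased, so this is fine.

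The main obstacle I expect is the compatibility of the \emph{arc-cutting} operation on $D$ with the \emph{levee} operation on arrows at each inductive stage: \Cref{lem:SplitAtOneEquivToZembyk} is proved by a local picture near $\tau_v$, and to iterate it I must be sure that after previous cuts the relevant local configuration near $\tau_{v(m)}$ still looks like one of the four cases in \Cref{defn-relational}, with the cut arc $\tau_{v(m)}^*$ still in "good position." This is true because the levee leaves non-relational vertices alone and the arcs corresponding to them untouched — but spelling out that the geometric and algebraic pictures stay in lockstep through all $n$ steps, rather than just one, is the part that requires attention rather than routine verification.
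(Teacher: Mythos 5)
Your proof takes essentially the same approach as the paper: combine \Cref{lem:SplitAtOneEquivToZembyk} (one split equals one levee) with the order-independence results \Cref{lem:GeometricZembykOrderDoesntMatter} and \Cref{lem:ZembykOrderDoesntMatter} to iterate through all relational vertices. Your version makes the induction and the bookkeeping (disjoint unions of dissected surfaces, the fact that $Q_{D_i}$ is recovered from $D_i$ alone rather than from a dual pair, and the locality argument showing the pictures stay in lockstep) explicit where the paper's two-sentence proof leaves them implicit, but the underlying argument is identical.
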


\begin{proof}
By Lemma \ref{lem:SplitAtOneEquivToZembyk}, splitting at one arc $\tau_v^*$ associated to a relational vertex $v$ is equivalent to taking the levee at $v$. Moreover, by Lemma \ref{lem:GeometricZembykOrderDoesntMatter}, there is a well-defined way to split at multiple arcs, just as by Lemma \ref{lem:ZembykOrderDoesntMatter} we can take the levee simultaneously at a set of vertices. Therefore, it is equivalent to split at all $\tau_v^*$ for $v \in R^*_{\overline{Q}}$ or perform the Zembyk excision of the pair $\overline{Q}$. 
\end{proof}

Since the result of applying Zembyk's algorithm to a locally gentle pair $\overline{Q}$ is a union of quivers of type $A$, type $\tilde{A}$, or cycles without relations, we can also describe the output of performing this split at $R^*_{\overline{Q}}$.

\begin{cor}
Let $(S_{\overline{Q}}, V_{\overline{Q}},D_{\overline{Q}}, R^*_{\overline{Q}})$ be a dissected surface corresponding to a locally gentle pair $\overline{Q}$, and where $R^*_{\overline{Q}} \subseteq D^*_{\overline{Q}}$ is the subset of the dual dissection associated to relational vertices. Then, each connected component $(S_i,M_i,D_i)$ of the split of  $(S_{\overline{Q}}, V_{\overline{Q}} \cup V_{\overline{Q}}^*, D_{\overline{Q}}^*, R^*_{\overline{Q}})$ along $R^*_{\overline{Q}}$ is either a polygon, an annulus, or a once-punctured disk, and for each $i$, there are no internal faces in $\mathcal{F}(D_i)$. 
\end{cor}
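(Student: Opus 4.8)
The plan is to read off the components of the split directly from \Cref{thm:SplitVsZembyk} and then translate the resulting combinatorics back through the dictionary between locally gentle pairs and cellular dissections of \cite{Palu-Pilaud-Plamondon-non-kissing-non-crossing}. First I would note that, by \Cref{thm:SplitVsZembyk}, the quivers $Q_{D_i}$ of the components $(S_i,M_i,D_i)$ of the split along $R^*_{\overline{Q}}$ are precisely the connected components of $Q^{\zembyk}(Z)$, and that by the construction in \Cref{defn-zembyk-quiver-scissors} the algorithm halts only once its relation set is empty; hence each $Q_{D_i}$ carries the empty set of relations.

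The next step is to classify connected locally gentle pairs with empty relation set. Each $(Q_{D_i},\emptyset)$ is locally gentle by \Cref{lem:ZembykBasics}, so by \Cref{def-locally-gentle-pair} every vertex is the head, and also the tail, of at most two arrows, and every arrow admits at most one length-two path of the form $cb$ and at most one of the form $ba$. Since there are no relations, every length-two path is admissible, so a vertex that is the head of two arrows and the tail of an arrow, or the tail of two arrows and the head of an arrow (a tributary- or distributary-type configuration in the sense of \Cref{defn-relational}), would force some arrow to admit two distinct length-two extensions, a contradiction. Hence every vertex incident to both an incoming and an outgoing arrow has exactly one of each, so the underlying graph of $Q_{D_i}$ has all vertices of degree at most two and is either a path or a cycle. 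A path component is an orientation of type $\mathbb{A}$; a cyclic component is either an orientation of type $\tilde{\mathbb{A}}$ --- one carrying at least one source and one sink --- or a consistently oriented cycle. This makes precise the assertion recorded just before the corollary.

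Now I would carry out the geometric translation one component at a time. For an orientation of $\mathbb{A}$ or $\tilde{\mathbb{A}}$ with no relations there are only finitely many admissible paths (an acyclic quiver has finitely many paths, and a non-directed orientation of a cycle has paths of bounded length), so by the criterion recalled in \S\ref{subsec-surfaces-and-locally-gentle-pairs} every arc of $D_i$ has both endpoints on $\partial S_i$; in particular $S_i$ has no punctures, and the surface models for gentle pairs of types $\mathbb{A}$ and $\tilde{\mathbb{A}}$ (\cite{Assem-Brustle-Charbonneau-Jodoin-Plamondon-gentle-arising-from-surface-triangulations,Baur-Coelho-Simoes-geometric-model-module-cat-gentle,Palu-Pilaud-Plamondon-non-kissing-non-crossing}) identify $S_i$ with a polygon and with an annulus respectively. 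For a consistently oriented cycle the admissible paths are unbounded, so some arc of $D_i$ ends at an interior puncture of $S_i$ lying in $V$; since $Z_{D_i}=\emptyset$, no directed cycle of $Q_{D_i}$ has all of its length-two subpaths in the relation ideal, so by the analysis in the proof of \Cref{lem:DivideIntoSmallerFaces} there are no internal faces in $\mathcal{F}(D_i)$ and hence no punctures from $V^*$ in $S_i$; as $Q_{D_i}$ contains a single cycle, $S_i$ has exactly one puncture and $D_i$ is the fan of arcs joining it to the boundary marked points (compare the leftmost component in \Cref{fig:cutting_example_running}), so $S_i$ is a once-punctured disk. In all three cases $\mathcal{F}(D_i)$ has no internal faces: by the proof of \Cref{lem:DivideIntoSmallerFaces} an internal face of a cellular dissection corresponds to a directed cycle all of whose length-two subpaths lie in the relation ideal, which $Z_{D_i}=\emptyset$ rules out.

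I expect the main obstacle to be the geometric translation in the third step: one must track carefully where punctures can arise, combining the gentle-versus-locally-gentle criterion with the internal-face/$V^*$-puncture bookkeeping of \Cref{lem:DivideIntoSmallerFaces}, in order to conclude that a single cycle in $Q_{D_i}$ forces exactly one hole in $S_i$ and that no further punctures or higher genus can occur. No single hard computation is involved, but the smallest cases --- a lone vertex and a single loop --- should be checked against the conventions of \cite{Palu-Pilaud-Plamondon-non-kissing-non-crossing} to be sure they do not produce a spurious arrow or internal face.
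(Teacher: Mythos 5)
Your proof follows the algebraic route the paper sketches in the sentence immediately preceding the corollary — reduce via \Cref{thm:SplitVsZembyk} to the connected components of $Q^{\zembyk}(Z)$, classify them, and translate back through the surface dictionary — whereas the paper's explicit remark after the corollary offers an alternative, purely geometric argument (every face of $S_i\setminus D_i$ is a disk with at most two sides from $D_i$, and one enumerates the ways to glue such faces). Your version is actually more complete than the paper's brief statement on the algebraic side: rather than citing the classification of Zembyk excisions, you re-derive from \Cref{def-locally-gentle-pair} that a connected locally gentle pair with $Z=\emptyset$ has underlying graph a path or a cycle, distinguishing non-consistent orientations (type $\tilde{\BA}$) from consistently oriented cycles; this is useful since the paper's reference to Zembyk covers only the gentle (not locally gentle) case, and the extension to oriented cycles is otherwise unargued. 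Your geometric translation (acyclic path $\to$ polygon, non-consistent cycle $\to$ annulus, oriented cycle $\to$ once-punctured disk with a fan from a $V$-puncture) is correct but stated informally; to make it airtight one would want either a small Euler-characteristic count ruling out extra handles or punctures, or the gluing argument the paper sketches. The observation that $Z_{D_i}=\emptyset$ forbids internal faces of $\mathcal{F}(D_i)$, by the characterisation inside the proof of \Cref{lem:DivideIntoSmallerFaces}, is exactly right and is the cleanest part of the argument.
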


We can also prove this corollary independent of the connection to representation theory. After the splitting, all faces of $S_i \backslash D_i$ will be cellular and will be bounded by at most two arcs from $D_i$. The list of options in the corollary correspond to the only ways to glue such faces together to form a connected surface with dissection.

We remark that a related, though different, notion of cutting triangulated surfaces was introduced in \cite{David-Roesler-Schiffler-algebras-from-surfaces-without-punctures} to study a certain class of gentle algebras called ``surface algebras.''

\section{Semilinearity}
\label{sec-semilinearity}
Our results extend the reach of the current literature by allowing \emph{semilinear} representations. We begin by recalling the more general semilinear context. 

\begin{setup}
    Throughout \S\ref{sec-semilinearity} let $K$ be a (associative, unital, but possibly noncommutative) division ring, and write $\Aut(K)$ for the group of ring automorphisms of $K$. 
\end{setup}

We continue with the setup used in  \cite[\S 2.1]{BenTenCraBoe}. 

\begin{defn}\label{defn:rings_over_division_rings}
Recall that a $K$-\emph{ring} is a ring $A$ equipped with a ring homomorphism $K\to A$. Equivalently $A$ is a $K$-bimodule equipped with a $K$-bilinear ring multiplication. Explicitly, a $K$-bimodule $A$ is a $K$-ring if, for any $\lambda \in K $ and for any $a,b\in A$, we have 
\[
\begin{array}{cccc}
 \lambda (ab)=(\lambda a) b,   & (ab)\lambda=a(b\lambda),  & a(\lambda b)=(a\lambda )b,   & \lambda 1=1\lambda.  
\end{array}
\]   
\end{defn}

\begin{defn} \label{defn:semilinear_maps}
Fix $\sigma\in\Aut(K)$. 
For a left (respectively, right) $K$-module $V$ we denote by ${}_{\sigma}V$ (respectively, $V_{\sigma}$) its \emph{left} (respectively, \emph{right}) \emph{twist by} $\sigma$, which is a left (respectively, right) $K$-module consisting of the elements of $V$ subject to the action of $K$ via restriction by $\sigma$. 
So the twisted action of $\lambda\in K$ on $v\in V$ is given by $\sigma(\lambda)v$ (respectively, $v\sigma(\lambda)$). 

A function $\varphi\colon V\to W$ between left $K$-modules is said to be $\sigma$-\emph{semilinear} provided $\varphi(\lambda v)=\sigma(\lambda)\varphi(v)$ for all $\lambda\in K$ and $v\in V$. Hence $\varphi$ is $\sigma$-semilinear if it induces a $K$-linear map from $V$ to the left twist of $W$ by $\sigma$.  
\end{defn}

\begin{example}\label{example:special_cases_semilinear}
The following example discusses several special cases, some of which we will revisit later.
\begin{enumerate}
    \item If $K$ is commutative then $K$-algebras are $K$-rings where the image of $K$ is central. 
    \item If $K$ is a division ring then any $K$-ring contains $K$ as a subring, since the ring homomorphism $K\to A$ must have been injective. 
    \item If $\sigma$ is the identity automorphism of $K$ then any $K$-module is equal to its twist by $\sigma$, and hence $\sigma$-semilinear maps are the same thing as $K$-linear maps. 
\end{enumerate}
\end{example}
Even when $K$ is commutative (in other words, when $K$ is a field) there are interesting examples of $K$-rings which are not $K$-algebras. Likewise there are interesting examples of $\sigma$-semilinear maps which are not linear maps. 

\begin{example}
    \label{examples-of-automorphisms}
    Recall that for a division ring $K$ and an automorphism $\sigma$ of $K$, Ore \cite{Ore-noncopmm-polynomials} gave 
 a division algorithm for the ring $K[x;\sigma]$ of skew polynomials. 
\begin{enumerate}
    \item Prototypical examples of $\BC$-rings which are not $\BC$-algebras are the division ring $\BH=\BR\oplus i\BR\oplus j\BR\oplus k\BR$ of quaternions, and the ring of skew-polynomials $\BC[x;-]$ with respect to complex conjugation. 
    Note the isomorphism $\BC[x;-]/\langle x^{2}+1\rangle\cong \BH$. 
    \item Let $\BF_{p^{n}}$ be the field with $p^{n}$ elements for some prime integer $p>0$, and let $\varphi$ be the Frobenius automorphism of $\BF_{p^{n}}$ given by $f\mapsto f^{p}$. 
   Note that working with rings of the form $\BF_{p^{n}}[x;\varphi]/\langle x^{4}+x^{2}+1\rangle$ gives a useful setting for coding theory over finite fields; see for example \cite[Example 3]{Boucher-Ulmer-coding-with-skew-polynomials}.
   \item One can define spatial rotation in $\BH$ by means of a conjugation automorphism. 
   Namely let $r=\cos (\alpha/2)+\sin(\alpha/2)(i+j+k)$ for some $\alpha\in[0,2\pi)$. Then the automorphism of $\BH$ given by $h\mapsto rhr^{-1}$ sends a quarternion to its rotation by $\alpha$ with respect to the real line. 
   This map defines an $\BR$-linear automorphism of $\BH$. 
   The restriction of this automorphism to subspaces of \emph{timelike} quaternions has been studied, for example, by Ergin and \"{O}zdemir \cite[Theorem 5]{Ozdemir-Ergin-rotations-with-unit-timelike}. 
\end{enumerate}
\end{example}

\subsection{Semilinear path algebras}
\label{sec-semilinear-path-algebras}
We recall a construction from \cite[\S 2.1]{BenTenCraBoe} generalising the usual notion of a path algebra over a field to the context of semilinear maps.

\begin{setup}
 In \S\ref{sec-semilinear-path-algebras} let $Q$ be a finite quiver and $\boldsymbol{\sigma}\colon Q_{1}\to \Aut(K)$ be a function sending each $a\in Q_{1}$ to some $\sigma_{a}\in\Aut(K)$.   
\end{setup}

\begin{defn}\label{defn:semilinear_path_algebra}
The \emph{semilinear path algebra} $K_{\boldsymbol{\sigma}} Q$ is the $K$-ring generated by the trivial paths and arrows, subject to the relations 
\[
\begin{array}{cccl}
  \sum_{v\in Q_0} e_{v} = 1, & e_{v}e_{v}=e_{v}, & e_{u} e_{v} = 0, &  (u,v\in Q_{0}\text{ with }u\neq v).\\[-1em]\\
  e_{v} \lambda = \lambda e_{v}, & e_{h(a)} a = a = a e_{t(a)}, & a \lambda = \sigma_{a}(\lambda) a, &  (v\in Q_{0}, a\in Q_{1}, \lambda \in K).
\end{array}
\]
\end{defn}

\begin{rem}
Taking $K$ to be commutative, and hence a field, the usual path algebra $KQ$ is defined using the first 6 equalities above, together with the simplified equation, $\lambda a=a\lambda$. Hence in this setting $KQ=K_{\boldsymbol{\sigma}}Q$ where each $\sigma_{a}$ is the identity on $K$.
\end{rem}

\begin{example}\label{example:semilinear_loop_example}
If $Q$ is a single loop, then $K_{\boldsymbol{\sigma}} Q$ is isomorphic to a skew-polynomial ring $K[x;\sigma]$, where $\sigma$ is the automorphism of $K$ associated to this loop.
\end{example}

We next give an example of a semilinear paths algebra of classical Dynkin type $\BA$.

\begin{example}
Let $K$ and $\sigma_{\alpha}=\rho,\sigma_{\beta}=\tau\in\Aut(K)$  be  arbitrary where 
$Q$ is the quiver
\[
 \begin{tikzpicture}[xscale=2]
	\node (1) at (1,0) {1};
	\node (2) at (2,0) {2};
	\node (3) at (3,0) {3};
	\draw[->] (1) to node[above]{$\alpha$}(2);
	\draw[->] (2) to node[above]{$\beta$}(3);
\end{tikzpicture}.\]
The semilinear path algebra $K_{\rho, \tau}Q$ is isomorphic to the $K$-ring  $A$ given by matrices 
$$M = \begin{pmatrix}
    \lambda & 0 & 0 \\
    \mu & \lambda' & 0 \\
    \eta & \mu' & \lambda''
\end{pmatrix}, \text{ where } \mu, \lambda, \lambda', \lambda'' \in 
K, \eta, \mu' \in 
K_{\tau},$$
where $
K_{\tau}$ is the module given by $\varphi\colon  
K_{\tau} \otimes_{
K} 
K \to 
K_{\tau}$ defined by $\eta \otimes \lambda \mapsto \eta \tau(\lambda)$. Multiplication in $A$ respects the $
K_{\tau}$ structure in some of its entries via
$$\begin{pmatrix}
     \lambda & 0 & 0 \\
    \mu & \lambda' & 0 \\
    \eta & \mu' & \lambda''
\end{pmatrix} \begin{pmatrix}
    \kappa & 0 & 0 \\
    \iota & \kappa' & 0 \\
    \theta & \iota' & \kappa''
\end{pmatrix} = \begin{pmatrix}
     \lambda \kappa & 0 & 0 \\
    \mu \kappa+\lambda' \iota & \lambda' s' & 0 \\
    \eta \kappa+\mu'\tau(\iota) + \lambda''\theta & \mu'\kappa'+\lambda''\iota' & \lambda''\kappa''
\end{pmatrix}.$$

The isomorphism is defined by sending 
$$e_1 \mapsto \begin{pmatrix}
    1 & 0 & 0 \\
    0 & 0 & 0 \\
    0 & 0 & 0
\end{pmatrix}; \hspace{1em} e_2 \mapsto \begin{pmatrix}
    0 & 0 & 0 \\
    0 & 1 & 0 \\
    0 & 0 & 0
\end{pmatrix}; \hspace{1em} e_3 \mapsto \begin{pmatrix}
    0 & 0 & 0 \\
    0 & 0 & 0 \\
    0 & 0 & 1
\end{pmatrix};$$
$$\alpha \mapsto \begin{pmatrix}
    0 & 0 & 0 \\
    1 & 0 & 0 \\
    0 & 0 & 0
\end{pmatrix}; \hspace{1em} \beta \mapsto \begin{pmatrix}
    0 & 0 & 0 \\
    0 & 0 & 0 \\
    0 & 1 & 0
\end{pmatrix}.$$

\end{example}

We now recall the definition of a tensor ring in order to use it to state a condition for when a semilinear path is a hereditary $K$-ring. 

\begin{defn}\label{defn:tensor_ring}
Let $R$ be a ring, and let $M$ be an $R$-$R$-bimodule. The \emph{tensor ring} $T_{R}(M)=\bigoplus_{n\geq0} M^{\otimes n}$ of $M$ over $R$ is given by the direct sum of tensor powers $M^{\otimes n}=M\otimes_{R}\dots \otimes_{R}M$ with a multiplication induced by the canonical $R$-balanced maps $M^{\otimes n}\times M^{\otimes m}\to M^{\otimes n+m}$. 
As noted in \cite{Cohn-1995-skew-fields-book},  the tensor ring $T_{R}(M)$ is an $R$-ring. 
Furthermore, if we assume  $S$ to be a central subring of $R$ then provided $ms=sm$ for all $s\in S$ and $m\in M$, we have that the $R$-ring $T_{R}(M)$ is also an $S$-algebra.  
\end{defn}

\begin{rem}
\label{rem-semilinear-path-algebra-is-a-tensor-ring}
    We explain how the semilinear path algebra arises as a tensor ring, as was explained at the beginning of \cite[\S 2.1]{BenTenCraBoe}. 
    Consider the semisimple ring $R=K^{Q_{0}}$ defined as the direct product of $|Q_{0}|$ copies of the  division ring $K$. 
Consider the $R$-$R$-bimodule $M=K^{Q_{1}}$ where the actions of $R$  
on the left and right are respectively, defined by
\[
\begin{array}{ccc}
(\lambda_{v})(\mu_{a})=(\lambda_{h(a)}\mu_{a}), & (\mu_{a})(\eta_{v})=(\mu_{a}\sigma_{a}(\eta_{t(a)})), & ((\lambda_{v}),(\eta_{v})\in R,\,(\mu_{a})\in M).
\end{array}
\]
Consider the map $K_{\boldsymbol{\sigma}}Q\to T_{R}(M)$ sending a path $p= a_{1}\dots  a_{n}$ to the pure tensor $m_{1}\otimes\dots \otimes m_{n}\in M^{\otimes n}$ where we define each $m_{i}=(\delta_{i,a}\colon a\in Q_{1})$ by defining  $\delta_{i,a}\in K$ by $\delta_{i,a}=1$ when $a=a_{i}$, and $\delta_{i,a}=0$ otherwise. 
Since the paths in $Q$ give a (left or right) $K$-basis for $K_{\boldsymbol{\sigma}}Q$, this map is an isomorphism of $K$-rings. 
\end{rem}

\begin{prop}
\label{prop:acylic-semilinear-implies-hereditary-ring}
If $Q$ is acyclic  then $K_{\boldsymbol{\sigma}}Q$ is a hereditary $K$-ring. 
\end{prop}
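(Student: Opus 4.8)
The plan is to use the identification of the semilinear path algebra with a tensor ring recorded in \Cref{rem-semilinear-path-algebra-is-a-tensor-ring}. Write $A=K_{\boldsymbol\sigma}Q$, $R=K^{Q_0}$, and let $M=K^{Q_1}$ be the $R$-$R$-bimodule described there, so that $A\iso T_R(M)=\bigoplus_{n\geq0}M^{\otimes n}$ as $K$-rings, endowed with the resulting grading $A_n=M^{\otimes n}$ (acyclicity of $Q$ makes this grading bounded, though the argument does not strictly require this). Since $Q$ is finite, $R$ is a finite direct product of copies of the division ring $K$, hence semisimple; consequently every one-sided $R$-module is projective, so in particular $M$ is projective on both sides, and for any left $A$-module $N$ both $N$ and $M\otimes_R N$ are projective left $R$-modules.

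First I would set up the \emph{fundamental exact sequence} of the tensor ring: for each left $A$-module $N$ the following sequence of left $A$-modules is exact:
\[
0\longrightarrow A\otimes_R M\otimes_R N\xrightarrow{\ \mu\ }A\otimes_R N\xrightarrow{\ \epsilon\ }N\longrightarrow 0 ,
\]
where $\epsilon(a\otimes n)=an$ and $\mu(a\otimes m\otimes n)=am\otimes n-a\otimes mn$ (the product in $A$, and the action of $m\in M\subseteq A$ on $n$, respectively). Surjectivity of $\epsilon$ and $\epsilon\circ\mu=0$ are immediate; the work is exactness in the middle together with injectivity of $\mu$. For this I would argue with the grading: $\mu$ maps $A_p\otimes_R M\otimes_R N$ into $A_{p+1}\otimes_R N\oplus A_p\otimes_R N$, and its top-degree component is the canonical isomorphism $A_p\otimes_R M\otimes_R N\iso A_{p+1}\otimes_R N$, so comparing leading terms gives injectivity of $\mu$; and an induction on degree, writing a generator $m_1\cdots m_p$ of $A_p$ as $a'm$ with $a'\in A_{p-1}$, $m\in M$, and using $a'm\otimes n=\mu(a'\otimes m\otimes n)+a'\otimes mn$, shows $A\otimes_R N=\im\mu+(R\otimes_R N)$. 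Since $\epsilon$ restricts to an isomorphism $R\otimes_R N\iso N$ and annihilates $\im\mu$, this forces $\ker\epsilon=\im\mu$, completing exactness.

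Next I would observe that both outer terms are projective over $A$: as $N$ is a projective left $R$-module it is a direct summand of a free module $R^{(I)}$, so $A\otimes_R N$ is a direct summand of $A^{(I)}$ and hence projective over $A$; likewise $A\otimes_R M\otimes_R N\iso A\otimes_R(M\otimes_R N)$ is projective over $A$ because $M\otimes_R N$ is $R$-projective. Thus the fundamental sequence is a projective resolution of $N$ of length at most $1$, so $\pdim_A N\leq1$; as $N$ was arbitrary, $\lgldim A\leq1$. The symmetric argument for right $A$-modules (using $0\to N\otimes_R M\otimes_R A\to N\otimes_R A\to N\to0$) gives $\rgldim A\leq1$. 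Hence $A=K_{\boldsymbol\sigma}Q$, which is a $K$-ring by \Cref{defn:semilinear_path_algebra}, is hereditary.

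I expect the only genuine obstacle to be the exactness of the fundamental sequence; everything else reduces immediately to the semisimplicity of $R=K^{Q_0}$. That exactness is the standard graded/filtered computation sketched above, and if one prefers a more hands-on treatment, acyclicity of $Q$ can be invoked to reduce it to the finitely many nonzero graded pieces $A_0,\dots,A_{|Q_0|-1}$.
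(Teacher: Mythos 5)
Your argument is correct, and it takes a genuinely more self-contained route than the paper. The paper's proof begins with the same identification $K_{\boldsymbol\sigma}Q\cong T_R(M)$ over the semisimple base $R=K^{Q_0}$ (\Cref{rem-semilinear-path-algebra-is-a-tensor-ring}), uses acyclicity only to conclude $M^{\otimes n}=0$ for $n\gg 0$, and then outsources the heredity claim to a cited result of Jans and Nakayama. You instead prove heredity directly by constructing and verifying the fundamental exact sequence
\[
0\longrightarrow A\otimes_R M\otimes_R N\longrightarrow A\otimes_R N\longrightarrow N\longrightarrow 0
\]
of the tensor ring, and then noting that semisimplicity of $R$ makes both outer terms $A$-projective, so $\pdim_A N\leq 1$ for every $N$, and symmetrically on the right. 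Your verification is sound: injectivity of $\mu$ by comparing top graded pieces of a finitely supported element, and middle exactness by the downward induction on degree that shows $A\otimes_R N=\Im\mu+(R\otimes_R N)$, with $\epsilon$ restricting to an isomorphism on the latter summand. As you observe, this argument does not actually invoke acyclicity at any point, so you have in effect proved the stronger statement that $K_{\boldsymbol\sigma}Q$ is hereditary for every finite quiver $Q$; the hypothesis in the proposition, and the $M^{\otimes n}=0$ observation in the paper, are tailored to the form of the Jans--Nakayama reference (and are also what the paper uses elsewhere to get finite-dimensionality of $\Gamma$). The tradeoff is concision versus transparency: the paper dispatches the claim in two lines with a citation, while your version makes the mechanism behind that citation explicit.
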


\begin{proof}
We recall and keep the notation from \Cref{rem-semilinear-path-algebra-is-a-tensor-ring}. 
Since $Q$ is acyclic there is an upper bound on the length of a path in $Q$. 
Altogether this means $M^{\otimes n}=0$ for $n\gg 0$. 
That $K_{\boldsymbol{\sigma}}Q$ is a hereditary ring  follows immediately from a result of Jans and Nakayama \cite[Proposition 4]{Jans-Nakayama-On-the-Dimension-of-Modules-and-Algebras}, noting that $R$ is semisimple. 
\end{proof}

\begin{lem}
\label{lem:primitive-idempotents-in-f-d-semilinear-quotients}
(c.f. \cite[Lemmas 2.4 and 2.10]{AssemSimsonSkowronski-Vol1}.)
    Let $A$ be the ideal in $K_{\boldsymbol{\sigma}}Q$ generated by the arrows in $Q$, and let $\Lambda=K_{\boldsymbol{\sigma}}Q/J$ where  $J$ is an ideal in $K_{\boldsymbol{\sigma}}Q$ with $J\subseteq A^{2}$ and $A^{d}\subseteq J$ for some $d>1$. 
   Then the cosets $\overline{e}_{v}=e_{v}+J\in\Lambda$  ($v\in Q_{0}$) define a complete and orthogonal set of primitive idempotents. 
   
Furthermore $\rad(\Lambda)=A/J$, the two-sided ideal generated by the arrows in $Q$.   
\end{lem}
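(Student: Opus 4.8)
The plan is to run the classical argument for bound path algebras over a field (compare \cite[Lemmas 2.4 and 2.10]{AssemSimsonSkowronski-Vol1}), checking that nothing breaks when the ground ring $K$ is a noncommutative division ring and the arrows are decorated by automorphisms. Throughout I would use the identification $K_{\boldsymbol{\sigma}}Q\cong T_{R}(M)$ of \Cref{rem-semilinear-path-algebra-is-a-tensor-ring}, where $R=K^{Q_{0}}$ and $M=K^{Q_{1}}$; this endows $K_{\boldsymbol{\sigma}}Q$ with the length grading $K_{\boldsymbol{\sigma}}Q=\bigoplus_{n\geq 0}M^{\otimes n}$ with degree-zero part $R$. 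Under this grading $A=\bigoplus_{n\geq 1}M^{\otimes n}$ is the ideal generated by the degree-one part, and since each $M^{\otimes p}=M\otimes_{R}M^{\otimes (p-1)}$ is spanned by pure tensors one gets $A^{k}=\bigoplus_{n\geq k}M^{\otimes n}$ for every $k\geq 1$. Note also that $Q$ being finite together with $A^{d}\subseteq J$ makes $\Lambda$ a finitely generated, hence finite-dimensional, $K$-module, so $\Lambda$ is a (left and right) Artinian ring and the usual semiperfect-ring toolkit applies.

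First I would record that the $\overline{e}_{v}$ form a complete orthogonal family of idempotents: this is immediate from the relations $\sum_{v}e_{v}=1$, $e_{v}e_{v}=e_{v}$ and $e_{u}e_{v}=0$ for $u\neq v$ in \Cref{defn:semilinear_path_algebra}, which descend to the quotient, and they are nonzero since $e_{v}\notin A\supseteq A^{2}\supseteq J$ (the image of $e_{v}$ in $K_{\boldsymbol{\sigma}}Q/A\cong R$ being the $v$-th standard idempotent). Next I would compute the radical: set $N=A/J\subseteq\Lambda$, a two-sided ideal as $J\subseteq A$. Because $A^{d}\subseteq J$ we have $N^{d}=0$, so $N$ is nilpotent and hence $N\subseteq\rad(\Lambda)$; conversely $\Lambda/N\cong K_{\boldsymbol{\sigma}}Q/A\cong R=K^{Q_{0}}$ is a finite product of division rings, so it is semisimple and $\rad(\Lambda)/N=\rad(\Lambda/N)=0$. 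Therefore $\rad(\Lambda)=N=A/J$, which is the last assertion.

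Finally I would deduce primitivity of each $\overline{e}_{v}$. Since $N=\rad(\Lambda)$ is nilpotent, idempotents lift along $\Lambda\twoheadrightarrow\Lambda/N$, and for any idempotent $e$ the set $e\rad(\Lambda)e$ is a nil ideal of $e\Lambda e$; hence $e$ is primitive in $\Lambda$ if and only if its image is primitive in $\Lambda/N$, if and only if $\overline{e}_{v}(\Lambda/N)\overline{e}_{v}$ has no idempotent other than $0$ and $\overline{e}_{v}$. But $\overline{e}_{v}(\Lambda/N)\overline{e}_{v}\cong e_{v}Re_{v}\cong K$ is a division ring, whose only idempotents are $0$ and $1$. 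Therefore every $\overline{e}_{v}$ is primitive, completing the proof.

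I do not anticipate a genuine obstacle here; the only points requiring a little care — rather than difficulty — are the bookkeeping identity $A^{k}=\bigoplus_{n\geq k}M^{\otimes n}$ extracted from the tensor-ring description, and the fact that idempotent lifting along a nil ideal, together with the identity $\rad(e\Lambda e)=e\,\rad(\Lambda)\,e$, continue to hold over an arbitrary division ring. These are standard facts about semiperfect (here, Artinian) rings that are insensitive to the base division ring, so I would cite them explicitly rather than reprove them.
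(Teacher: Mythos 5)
Your proof is correct. The computation of the radical is essentially identical to the paper's: both observe that $A/J$ is nilpotent (hence contained in $\rad(\Lambda)$) and that $\Lambda/(A/J)\cong K^{Q_{0}}$ is semisimple (hence $\rad(\Lambda)\subseteq A/J$).

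Where you differ is in the primitivity argument. The paper works by hand inside the corner ring $\Omega=\overline{e}_{v}\Lambda\overline{e}_{v}$: it writes an arbitrary idempotent as $f=\mu e_{v}+x+J$ with $\mu\in K$ and $x\in e_{v}Ae_{v}$, extracts $\mu(\mu-1)e_{v}\in A$ by reading off the degree-zero coefficient, deduces $\mu\in\{0,1\}$ from the path basis, and then uses nilpotency of $x$ modulo $J$ to force the higher-degree part to vanish. You instead invoke the general lemma that primitivity of an idempotent can be tested modulo a nil ideal: since $N=\rad(\Lambda)=A/J$ is nilpotent, idempotents of $e_{v}\Lambda e_{v}$ are detected by idempotents of $e_{v}\Lambda e_{v}/(e_{v}Ne_{v})\cong\overline{e}_{v}(\Lambda/N)\overline{e}_{v}\cong K$, a division ring. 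Both arguments are valid. The paper's explicit calculation is self-contained and makes visible exactly how the path grading enters; your argument is shorter and more conceptual, but offloads the work onto the standard idempotent-lifting toolkit (which, as you note, is genuinely insensitive to the base division ring, so this is a legitimate shortcut). If you wanted to tighten the write-up, it would be worth spelling out the lifting step in one line — a lift of a nontrivial idempotent $\bar{g}\in\overline{e}_{v}(\Lambda/N)\overline{e}_{v}$ cannot equal $0$ or $e_{v}$ since its image is neither $0$ nor $\overline{e}_{v}$, and conversely a nonzero idempotent of $e_{v}\Lambda e_{v}$ cannot map to $0$ modulo the nil ideal — but these are routine.
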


\begin{proof}
   The elements  $\overline{e}_{v}$ define a complete set of orthogonal idempotents in $\Lambda$, since the elements $e_{v}$ do so in $K_{\boldsymbol{\sigma}}Q$. 
    We fix $v\in Q_{0}$ and an idempotent $f$ of the ring $\Omega=\overline{e}_{v}\Lambda \overline{e}_{v}$, and we claim $f=0$ or $f=1$ in $\Omega$. 
    
By construction  $f=\mu e_{v}+x+J$ for some $\mu\in K$ and $x\in e_{v}Ae_{v}$.
Since  $f^{2}=f$ and hence $f^2 - f = 0$, we have $(\mu^{2}-\mu)e_{v}+(\mu -1)x +x \mu+x^{2}\in J$. 
Since $J\subseteq A^{2}\subseteq A$ and $(\mu -1)x ,x \mu,x^{2}\in A$  this means  $\mu(\mu-1)e_{v}\in A$. 
Since the paths give a $K$-basis for $K_{\boldsymbol{\sigma}}Q$, and since those of non-zero length give a $K$-basis for $A$, we have $\mu=0$ or $\mu=1$. 
Let $\overline{x}=x+J\in \Omega$. 
So either ($\mu=0$ and $\overline{x}$ is idempotent) or ($\mu=1$ and $-\overline{x}$ is idempotent). 
 
Since $A^{d}\subseteq J$ we have $x^{d}\in J$, and so $\overline{x}$ and $-\overline{x}$ are both nilpotent elements $\Omega$. 
Hence whether $\mu=0$ or $\mu=1$, in either case we have $\overline{x}=0$  and so $x\in e_{v}J e_{v}$, meaning $f=\mu e_{v}+J$. 
Thus, either ($\mu=0$ and hence $f=0$) or ($\mu=1$ and hence $f=1$).  

Let $I=A/J$. 
We now claim $\rad(\Lambda)=I$. Note that the paths of length at most $d-1$ must span $\Lambda$ as a $K$-module. 
Hence  $\Lambda$ is artinian. 
By assumption we have that $A^{d}\subseteq J$, meaning $A/J$ is nilpotent, and so $I = A/J\subseteq \rad(\Lambda)$; see for example \cite[Theorem 4.12]{Lam-first-course-noncomm-rings}. 
Note also that  $\Lambda/I\cong K_{\boldsymbol{\sigma}}Q/A\cong K^{n}$ where $n$ is the number of vertices in $Q$. 
Hence $\rad(\Lambda/I)=0$ (in other words, the ring $\Lambda/I$ is \emph{J}-\emph{semisimple}), and hence $I = A/J\supseteq \rad(\Lambda)$; see for example \cite[p. 68, Exercise 11]{Lam-first-course-noncomm-rings}. 
\end{proof}

\subsection{Semilinear gentle algebras and nodal $K$-rings}
\label{semilinear-gentle-implies-nodal}

\begin{setup}
    Throughout \Cref{semilinear-gentle-implies-nodal} let $\boldsymbol{\sigma}=(\sigma_{a})$ be a function $Q_{1}\to \Aut(K)$. 
\end{setup}

\begin{defn}  
By a \emph{semilinear locally gentle algebra over $K$} we mean a $K$-ring of the form $K_{\boldsymbol{\sigma}} Q/I$ where $I=\langle Z\rangle$ and where $(Q,Z)$ be a locally gentle pair. 

Let $\Lambda=K_{\boldsymbol{\sigma}} Q/I$ be a semilinear locally gentle algebra. 
Note that the paths in $Q$ that do not factor through a relation in $Z$ define a $K$-basis for $\Lambda$ by \cite[Theorem 2.22]{BenTenCraBoe}. 

In case this set of paths is finite, or equivalently, in case $\Lambda$ is finite-dimensional as a $K$-module, we say that $\Lambda$ is a \emph{semilinear gentle algebra}. 
\end{defn}

\begin{rem}
\label{remark-about-zembyks-excision-algebraic}
    We are now ready to supplement \Cref{remark-about-zembyks-excision} by explaining the main result of \cite{Zembyk-Skewed-Gentle-A} using the language of classical path algebras.

    Let $K$ be a field, $Q$ be a quiver, $Z$ be a set of paths in $Q$, and $KQ$ be the path algebra. 
    Note $KQ=K_{\boldsymbol{\sigma}}Q$ where each automorphism $\sigma_{a}$ ($a\in Q_{1}$) is taken to be the identity. 
    To say that the pair $(Q,Z)$ is gentle is the same as saying that the quotient $\Lambda=KQ/\langle Z\rangle $ is a (finite-dimensional) \emph{gentle algebra} in the usual sense of Assem  and Skowro\'{n}ski 
    \cite{assem-skowronski-gentle}. 
    
      In \Cref{remark-about-zembyks-excision} we noted  that when $(Q,Z)$ is gentle, the path algebra $\Gamma=KQ^{\zembyk}(Z)$ is a direct product of finitely many copies of path algebras of type $\BA$ and  $\tilde{\BA}$. 
    The main result in work of Zembyk \cite[p. 649, Theorem]{Zembyk-Skewed-Gentle-A} also exhibits an injective $K$-algebra homomorphism $\Lambda\to \Gamma$ such that $\rad(\Lambda)=\rad(\Gamma)$ and such that for any simple $\Lambda$-module $U$ the $\Lambda$-module $\Gamma\otimes_{\Lambda}U$ has length at most $2$. 
    In this way $\Lambda$ is a \emph{nodal algebra of type} $\BA$. 
\end{rem}

In light of \Cref{remark-about-zembyks-excision-algebraic} we introduce the following definition. 

\begin{defn}\cite[Definition 1]{Zembyk-Skewed-Gentle-A}
\label{def:semilinear-nodal}
    A finite-dimensional $K$-ring $\Lambda$ is said to be \emph{nodal} if it is a subring of a finite-dimensional  hereditary $K$-ring $\Gamma$ such that the following  holds. 
\begin{enumerate}
    \item The ring embeddings $K\to \Lambda$ and $\Lambda\to\Gamma$ compose to the ring embedding $K\to \Gamma$. 
    \item We have $\rad(\Lambda)=\rad (\Gamma)$.
    \item For any simple left  $\Lambda$-module $U$ the left $\Lambda$-module $\Gamma\otimes_{\Lambda}U$ has length at most $2$.  
\end{enumerate}
In this case we say $\Lambda$ is \emph{connected with} $\Gamma$. 
\end{defn}

\begin{setup}
\label{gentle-implies-nodal-setup}
    For what remains in \Cref{semilinear-gentle-implies-nodal} we let $\overline{Q} = (Q,Z)$ be a gentle pair, $I=\langle Z\rangle$ and  $\Lambda=K_{\boldsymbol{\sigma}}Q/I$, the associated (finite-dimensional) semilinear gentle algebra. 
    We also let  $Q'=Q^{\zembyk}(Z)$. 
    Recall $Q'$ was described iteratively in \Cref{defn-zembyk-quiver-scissors}.

    We introduce notation for $Q'$ for the remainder of \Cref{semilinear-gentle-implies-nodal}. We let $Q_{0}^{r}$ denote the set of relational vertices in $Q$. 
Then we write
\[
\begin{array}{cc}
    Q_{0}'=\{v'\in Q_{0}\setminus Q_{0}^{r}\}\cup \{u(\sharp),u(\flat)\colon u\in Q_{0}^{r}\}, 
    &
    Q_{1}'=\{a'\colon a\in Q_{1}\}. 
\end{array}
\]
We write $h',t'\colon Q'_{1}\to Q'_{0}$ for the head and tail functions, which satisfy $\{ h'(a'),t'(b')\}=\{v(\sharp),v(\flat)
\}$ for any  $v\in Q_{0}^{r}$ of the form $h(a)=t(b)$ where $ba\in Z$ for some $a,b\in Q_{1}$. 

We then let $\Gamma=K_{\boldsymbol{\sigma}'}Q'$ where we define the function $\boldsymbol{\sigma}'\colon Q'_{1}\to \Aut(K)$ by $a'\mapsto \sigma_{a}$. 
\end{setup}

\begin{lem}
\label{lem:acyclic-quiver}
    The quiver $Q'=Q^{\zembyk}(Z)$ is acyclic.
\end{lem}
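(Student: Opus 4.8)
The plan is to show that the non-trivial paths of $Q'=Q^{\zembyk}(Z)$ are in bijection with the admissible paths of $(Q,Z)$, and then to invoke the defining feature of a gentle pair, namely that there are only finitely many admissible paths.

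First I would record the relevant bookkeeping. By \Cref{defn-levee} each levee merely relabels the arrow set, so every arrow of $Q^{(n)}$ is $a^{(n)}$ for a unique $a\in Q_{1}$; iterating through \Cref{defn-zembyk-quiver-scissors}, every arrow of $Q'=Q^{(N)}$ is $a'$ for a unique $a\in Q_{1}$. Each levee $Q^{(n+1)}=(Q^{(n)})^{v(n)}$ comes with an obvious quiver morphism $Q^{(n+1)}\to Q^{(n)}$ that is the arrow-relabelling $a^{(n+1)}\mapsto a^{(n)}$ and sends the two new vertices $v(n)(\sharp),v(n)(\flat)$ back to $v(n)$; composing these yields a quiver morphism $Q'\to Q$. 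Hence any path $a_{1}'\cdots a_{k}'$ in $Q'$ produces a path $a_{1}\cdots a_{k}$ of the same length in $Q$.

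The heart of the argument is to prove that this path $a_{1}\cdots a_{k}$ is \emph{admissible}, i.e.\ that no $a_{i}a_{i+1}$ lies in $Z$. Suppose instead $a_{i}a_{i+1}=ba\in Z$, and set $v=t(b)=h(a)$, a relational vertex. Since each levee strictly decreases $|Z^{(n)}|$ while it is nonempty, the algorithm terminates with $Z^{(N)}=\emptyset$, and the levee at $v(n)$ removes exactly the relations centred at $v(n)$; therefore there is a step $m$ at which the relabelled copy of $v$ is chosen as $v(m)$ and the relabelled copy of $ba$ is deleted. Inspecting the four cases of \Cref{defn-levee}, in $(Q^{(m)})^{v(m)}$ the arrows corresponding to $b$ and $a$ acquire tail and head equal to the two \emph{distinct} new vertices $v(m)(\sharp)$, $v(m)(\flat)$ (in one order or the other, according to whether $ba$ plays the role of $ba$ or of $dc$), so they are no longer composable. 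By \Cref{lem:ZembykBasics}(ii) these two vertices are not relational, and since any subsequent levee only splits a relational vertex and only removes relations, $v(m)(\sharp)$ and $v(m)(\flat)$ persist unchanged — in particular remain non-relational and are never chosen again — through all later steps. Thus the tail of $b'$ and the head of $a'$ are still these two distinct vertices in $Q'$, contradicting that $a_{i}'a_{i+1}'=b'a'$ is a subpath of a path in $Q'$.

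Finally, a path is determined by its sequence of arrows, so $a_{1}'\cdots a_{k}'\mapsto a_{1}\cdots a_{k}$ is an injection from the paths of $Q'$ into the admissible paths of $(Q,Z)$. As $(Q,Z)$ is gentle, the latter set is finite, so $Q'$ has finitely many paths and hence no oriented cycle, since a cycle would yield paths of arbitrarily large length by repetition; therefore $Q'$ is acyclic. The main obstacle is exactly the third step: verifying that once a relation is "broken" by its levee it stays broken under all later levees, which is where one must use \Cref{lem:ZembykBasics}(ii) together with the facts that levees never identify vertices and never create new relations.
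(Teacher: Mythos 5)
Your proof is correct, but it takes a genuinely different route from the paper's. The paper assumes an oriented cycle $c'$ in $Q'$, pushes it down to a cycle $c$ in $Q$, invokes the finite-dimensionality of $\Lambda$ together with the basis theorem \cite[Theorem 2.22]{BenTenCraBoe} to conclude that some $c_j c_{j+1}\in Z$, and then reads off $t'(c'_j)=v(\sharp)\neq v(\flat)=h'(c'_{j+1})$ directly from the explicit description of $Q'$ set out in \Cref{gentle-implies-nodal-setup} -- contradiction. You instead avoid the algebra $\Lambda$ entirely: you show that the arrow-relabelling $Q'\to Q$ injects the non-trivial paths of $Q'$ into the admissible paths of $(Q,Z)$, and then use the defining finiteness of gentleness (\Cref{def-locally-gentle-pair}) to cap the number of paths in $Q'$, which forces acyclicity. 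The real labour in your argument -- tracking a relation $ba\in Z$ through the iterated levees until the step where $v=t(b)=h(a)$ is split, and then checking via \Cref{lem:ZembykBasics}(ii) that the two new vertices stay non-relational so that the severed composability of $b,a$ persists to $Q'$ -- is precisely the content that the paper front-loads without comment into the description of $(Q_0',Q_1',h',t')$ in \Cref{gentle-implies-nodal-setup}. So you are effectively re-deriving that description from \Cref{defn-levee,defn-zembyk-quiver-scissors}, which makes your proof more self-contained (it needs neither \cite[Theorem 2.22]{BenTenCraBoe} nor the finite-dimensional ring $\Lambda$), at the cost of being longer. One small point worth spelling out if you wrote this up: the reason $v(\sharp)^{\,\cdot}$ and $v(\flat)^{\,\cdot}$ stay non-relational under further levees is that a levee at $w\neq u$ leaves the arrows incident at $u$ unchanged up to relabelling and only shrinks the relation set, so non-relationality is preserved; you assert this but lean on it without making the mechanism explicit.
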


\begin{proof}
    For a contradiction assume there is an oriented cycle $c'=c'_{1}\dots c'_{t}$ ($c'_{i}\in Q_{1}'$)  in $Q'$ corresponding to an oriented cycle $c=c_{1}\dots c_{t}$ ($c_{i}\in Q_{1}$) in $Q$. 
Since $\Lambda $ is finite-dimensional we must have that $c^{n}\in I$ for some $n>0$. 

By \cite[Theorem 2.22]{BenTenCraBoe} this means that $c_{j}c_{j+1}\in Z$ for some $j=1,\dots, t$ where $c_{t+1}:=c_{1}$. 
Let $v\in Q_{0}^{r}$ be the vertex $t(c_{j})=h(c_{j+1})$. 
So we must have  $t'(c'_{j})=v(\sharp)$ and $h'(c'_{j+1})=v(\flat)$, but since $v(\sharp)\neq v(\flat)$ by construction, this  contradicts that $c'$ is a cycle. 
\end{proof}

\begin{lem}
\label{lem:condition-1-of-nodality}
    There is an injective ring homomorphism $\Delta \colon \Lambda\to \Gamma$ which satisfies condition (1) of \Cref{def:semilinear-nodal}. 
\end{lem}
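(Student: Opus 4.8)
The plan is to construct $\Delta$ by hand, first lifting to the semilinear path algebra and then passing to the quotient. Using the presentation in \Cref{defn:semilinear_path_algebra}, I will define a $K$-ring homomorphism $\widetilde{\Delta}\colon K_{\boldsymbol{\sigma}}Q\to\Gamma$ on generators by $e_{v}\mapsto e_{v'}$ for $v\notin Q_{0}^{r}$, by $e_{v}\mapsto e_{v(\sharp)}+e_{v(\flat)}$ for $v\in Q_{0}^{r}$, and by $a\mapsto a'$ for each arrow $a$. Since $\widetilde{\Delta}$ is required to be a $K$-ring homomorphism, its restriction to the copy of $K$ in $K_{\boldsymbol{\sigma}}Q$ is forced to be the structure map $K\to\Gamma$; hence, once $\widetilde{\Delta}$ is shown to be well defined and to vanish on $\langle Z\rangle$, the induced map $\Delta\colon\Lambda\to\Gamma$ will automatically satisfy condition (1) of \Cref{def:semilinear-nodal}, and only injectivity will remain.

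To see $\widetilde{\Delta}$ is well defined I will check the defining relations of \Cref{defn:semilinear_path_algebra} for these images. The relations $\sum_{v}e_{v}=1$ and $e_{u}e_{v}=\delta_{uv}e_{v}$ follow from $Q_{0}'$ being the disjoint union of $\{v'\colon v\notin Q_{0}^{r}\}$ with $\{v(\sharp),v(\flat)\colon v\in Q_{0}^{r}\}$ together with orthogonality of vertex idempotents in $\Gamma$; the relations $e_{h(a)}a=a=a\,e_{t(a)}$ hold because exactly one summand of $\widetilde{\Delta}(e_{h(a)})$ is the vertex idempotent at $h'(a')$ while the rest annihilate $a'$, and dually on the right; and $a\lambda=\sigma_{a}(\lambda)a$ transfers from the corresponding relation in $\Gamma$ since $\boldsymbol{\sigma}'$ sends $a'$ to $\sigma_{a}$. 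Next I will verify $\widetilde{\Delta}(ba)=0$ for $ba\in Z$: if $v=t(b)=h(a)$ is the associated relational vertex, then by the levee construction in \Cref{defn-levee}, carried through the Zembyk excision of \Cref{defn-zembyk-quiver-scissors} (using \Cref{lem:ZembykOrderDoesntMatter} and \Cref{lem:ZembykBasics}), $\{t'(b'),h'(a')\}=\{v(\sharp),v(\flat)\}$, so $t'(b')\neq h'(a')$ and $\widetilde{\Delta}(ba)=b'a'=b'\,e_{t'(b')}e_{h'(a')}\,a'=0$. Thus $\langle Z\rangle\subseteq\ker\widetilde{\Delta}$ and $\widetilde{\Delta}$ factors through a ring homomorphism $\Delta\colon\Lambda\to\Gamma$ satisfying condition (1).

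For injectivity I will use bases: by \cite[Theorem 2.22]{BenTenCraBoe} the cosets of the admissible paths of $Q$ form a left $K$-basis of $\Lambda$, and since $Q'$ is acyclic the paths of $Q'$ form a left $K$-basis of $\Gamma=K_{\boldsymbol{\sigma}'}Q'$. The key combinatorial input, again from \Cref{defn-levee}, is that splitting a relational vertex $v$ makes an inadmissible length-$2$ path $ba\in Z$ non-composable in $Q'$ but keeps every admissible length-$2$ path composable; consequently $\Delta$ sends an admissible path $a_{1}\cdots a_{n}$ of positive length to the genuine path $a_{1}'\cdots a_{n}'$ of $Q'$, and because $a\mapsto a'$ is a bijection $Q_{1}\to Q_{1}'$, distinct admissible paths have distinct images. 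As $\Delta$ is a $K$-ring map, $\Delta(\lambda\overline{p})=\lambda\,\Delta(\overline{p})$, so expanding $x\in\Lambda$ in the admissible-path basis and applying $\Delta$ writes $\Delta(x)$ as a $K$-combination of pairwise distinct basis elements of $\Gamma$ — the idempotent images $e_{v'}$ and $e_{v(\sharp)}+e_{v(\flat)}$ involve disjoint sets of vertex idempotents and are disjoint from the positive-length path images — with precisely the original coefficients; hence $\Delta(x)=0$ forces $x=0$.

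The only genuinely delicate step I anticipate is the combinatorial fact used in the last paragraph: that a relational vertex splits in exactly the way that severs the length-$2$ relations of $Z$ and no admissible $2$-paths, so that the admissible paths of $Q$ are carried bijectively onto a subset of the path basis of $Q'$. This is precisely what the levee construction of \Cref{defn-levee} achieves, and once it is invoked the remaining work is a routine verification of relations together with a linear independence argument.
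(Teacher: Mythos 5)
Your proposal follows essentially the same approach as the paper: define the map on idempotents and arrows (sending $e_v\mapsto e_{v'}$ for non-relational $v$, $e_v\mapsto e_{v(\sharp)}+e_{v(\flat)}$ for relational $v$, and $a\mapsto a'$), verify the defining relations of the semilinear path algebra are respected, observe that the levee construction makes $b'a'$ non-composable exactly when $ba\in Z$, and use the admissible-path basis of $\Lambda$ together with the path basis of $\Gamma$ to prove injectivity. The only cosmetic differences are that the paper routes the well-definedness through intermediate free $K$-rings $S$ and $T$, and that it proves $\ker(\nabla)=\langle Z\rangle$ at the level of $K_{\boldsymbol{\sigma}}Q$ rather than arguing injectivity directly on $\Lambda$ as you do; these are equivalent packagings of the same argument.
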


\begin{proof}
 Let $S$ be the free $K$-ring generated by the elements $e_{v}$ and $a$ with $(v,a)\in Q_{0}\times Q_{1}$, and let $T$ be the free $K$-ring  generated by the elements $e_{u}$ and $b$ with $(u,b)\in Q_{0}'\times Q_{1}'$. 
 
    We firstly claim there is a map $ K_{\boldsymbol{\sigma}}Q\to K_{\boldsymbol{\sigma}'}Q'$ of $K$-rings. To do so we begin by observing that there is a homomorphism of $K$-rings of the form $S\to T$ defined by $K$-linearly and multiplicatively extending  the assignments
\[
\begin{array}{ccc}
e_{v}\mapsto e_{v'} \quad (v\in Q_{0}\setminus Q_{0}^{r}),
&
e_{u}\mapsto  e_{u(\sharp)}+e_{u(\flat)} \quad (u\in Q_{0}^{r}), 
&
a\mapsto a' 
\quad 
(a\in Q_{1}).
\end{array}
\] 
To define a map $ K_{\boldsymbol{\sigma}}Q\to K_{\boldsymbol{\sigma}'}Q'$ we compose the map $S\to T$ with the canonical quotient map $ T\to K_{\boldsymbol{\sigma}'}Q'$. 
It is straightforward to check that the relations from \Cref{defn:semilinear_path_algebra} are sent to $0$ under this composition, by noting: that $\{u(\sharp),u(\flat)\}\cap\{v(\sharp),v(\flat)\}=\emptyset$ for any distinct $u,v\in Q_{0}^{r}$; that $\{u(\sharp),u(\flat)\}\cap\{v'\}=\emptyset$ for any distinct $u\in Q_{0}^{r}$ and any $v\in Q_{0}\setminus Q_{0}^{r}$; and that precisely one of the elements  $v(\sharp), v(\flat)$ is equal to the head (respectively, tail) of the arrow $a'$ for any arrow $a\in Q_{1}$ with head (respectively, tail) $v$. 

Thus the map $S\to T$ gives rise a map of $K$-rings  $\nabla\colon  K_{\boldsymbol{\sigma}}Q\to K_{\boldsymbol{\sigma}'}Q'$ that we now claim  gives rise to an injective map $\Delta\colon \Lambda\to \Gamma$ of $K$-rings that satisfies condition (1) of \Cref{def:semilinear-nodal}. 
Let $ba\in Z$, and write $v$ for the relational vertex $t(b)=h(a)$. 
By construction and without loss of generality we have $t'(b')=v(\sharp)$ and $h'(a')=v(\flat)$, and so $\nabla(ba)=b'a'=0$ in  $K_{\boldsymbol{\sigma}'}Q'=\Gamma$. Thus $Z\subseteq \ker(\nabla)$. 

We assert the reverse inclusion holds. 
Now suppose $x\in \ker(\nabla)$ and write $\lambda_{p}\in K$ for the coefficient of a given path $p$ in $Q$ arising in the expression for $x$. 
Suppose $p=a_{1}\dots a_{n}$ is a path in $Q$ 
 with $a_{i}\in Q$ and such that $a_{i}a_{i+1}\notin Z$  whenever $i<n$. 
We have that $h(a_{i+1}')=t(a_{i}')$ whenever $i<n$, for otherwise, without loss of generality, we have  $h(a_{j+1}')=v(\sharp)$ and $t(a_{j}')=v(\flat)$ for some $j<n$, in which case $a_{j}a_{j+1}\in Z$, a contradiction. 
Hence $\nabla(p)$ is a path in $Q'$ and this means $\lambda_{p}=0$ since $x\in \ker(\nabla)$. 
Hence $x$ is a $K$-linear combination of paths factoring through elements of $Z$. Thus $Z\supseteq \ker(\nabla)$. 

That condition (1) from \Cref{def:semilinear-nodal} holds is now trivial. 
\end{proof}

\begin{thm}
\label{thm-semilinear-fd-gentle-implies-nodal}
   Any finite-dimensional semilinear gentle algebra $\Lambda=K_{\boldsymbol{\sigma}}Q/\langle Z\rangle  $ is nodal, connected with $\Gamma=K_{\boldsymbol{\sigma}'}Q^{\zembyk}(Z)$. 
\end{thm}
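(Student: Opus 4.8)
The goal is to check the three conditions of \Cref{def:semilinear-nodal} for the pair $(\Lambda,\Gamma)$ with $\Gamma=K_{\boldsymbol{\sigma}'}Q'$, after identifying $\Lambda$ with its image under the embedding $\Delta\colon\Lambda\to\Gamma$ produced in \Cref{lem:condition-1-of-nodality}. First one records that $\Gamma$ is a finite-dimensional hereditary $K$-ring: the quiver $Q'=Q^{\zembyk}(Z)$ is finite (it has the same arrows as $Q$ and only $|Q_{0}^{r}|$ extra vertices) and acyclic by \Cref{lem:acyclic-quiver}, hence has finitely many paths, so $\dim_{K}\Gamma<\infty$; heredity is then \Cref{prop:acylic-semilinear-implies-hereditary-ring}. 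Condition (1) of \Cref{def:semilinear-nodal} is exactly \Cref{lem:condition-1-of-nodality}. What remains are conditions (2) and (3), and the main tool for both is \Cref{lem:primitive-idempotents-in-f-d-semilinear-quotients}.

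\textbf{Condition (2).} By \Cref{lem:primitive-idempotents-in-f-d-semilinear-quotients}, $\rad(\Lambda)$ is the two-sided ideal generated by the arrows of $Q$, with the nontrivial admissible paths of $(Q,Z)$ as a $K$-basis (using \cite[Theorem 2.22]{BenTenCraBoe}); applying the same lemma with $J=0$ (legitimate since $Q'$ is acyclic, so $A^{d}=0$ for $d\gg 0$), $\rad(\Gamma)$ is the two-sided ideal of $\Gamma$ generated by the arrows $a'$, with the nontrivial paths of $Q'$ as a $K$-basis. Reusing the path analysis from the proof of \Cref{lem:condition-1-of-nodality}, the assignment $a_{1}\cdots a_{n}\mapsto a_{1}'\cdots a_{n}'$ is a bijection from the nontrivial admissible paths of $(Q,Z)$ to the nontrivial paths of $Q'$: a sequence $a_{1}'\cdots a_{n}'$ is a path in $Q'$ exactly when $a_{1}\cdots a_{n}$ is a path in $Q$ through no relation, because the vertex-sets $\{v'\}$ and $\{u(\sharp),u(\flat)\}$ are pairwise disjoint while $t'(b')=v(\sharp)\neq v(\flat)=h'(a')$ whenever $ba\in Z$. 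Hence $\Delta$ carries the $K$-basis of $\rad(\Lambda)$ onto the $K$-basis of $\rad(\Gamma)$, so $\Delta(\rad(\Lambda))=\rad(\Gamma)$; since $\Delta$ is a ring isomorphism onto its image, this is precisely $\rad(\Lambda)=\rad(\Gamma)$ inside $\Gamma$.

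\textbf{Condition (3).} Let $U$ be a simple left $\Lambda$-module. Since $\Lambda/\rad(\Lambda)\cong K^{Q_{0}}$ by \Cref{lem:primitive-idempotents-in-f-d-semilinear-quotients}, the ideal $\rad(\Lambda)$ annihilates $U$ and $U\cong(\Lambda/\rad(\Lambda))\overline{e}_{v}$ for a unique $v\in Q_{0}$. Using condition (2) and that $\rad(\Gamma)$ is a two-sided ideal of $\Gamma$ (so $\Gamma\,\Delta(\rad(\Lambda))=\rad(\Gamma)$), right-exactness of $\Gamma\otimes_{\Lambda}-$ gives
\[
\Gamma\otimes_{\Lambda}U\;\cong\;\bigl(\Gamma\otimes_{\Lambda}(\Lambda/\rad(\Lambda))\bigr)\otimes_{\Lambda/\rad(\Lambda)}U\;\cong\;(\Gamma/\rad(\Gamma))\otimes_{\Lambda/\rad(\Lambda)}(\Lambda/\rad(\Lambda))\overline{e}_{v}\;\cong\;(\Gamma/\rad(\Gamma))\,\overline{\Delta}(\overline{e}_{v}),
\]
where $\overline{\Delta}\colon\Lambda/\rad(\Lambda)\to\Gamma/\rad(\Gamma)$ is induced by $\Delta$. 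By construction $\overline{\Delta}(\overline{e}_{v})=\overline{e}_{v'}$ when $v$ is not relational and $\overline{\Delta}(\overline{e}_{v})=\overline{e}_{v(\sharp)}+\overline{e}_{v(\flat)}$ when $v$ is relational, so $(\Gamma/\rad(\Gamma))\overline{\Delta}(\overline{e}_{v})$ is a direct sum of one, respectively two, simple $\Gamma/\rad(\Gamma)$-modules. As $\Gamma/\rad(\Gamma)\cong K^{Q_{0}'}$ is semisimple and each of its simple modules restricts along $\overline{\Delta}$ to a simple $\Lambda$-module, $\Gamma\otimes_{\Lambda}U$ has $\Lambda$-length $1$ or $2$. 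This establishes condition (3), completing the proof that $\Lambda$ is nodal, connected with $\Gamma$.

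\textbf{Expected obstacle.} Nothing here is deep given the earlier lemmas; the work is the bookkeeping in conditions (2) and (3) — tracking exactly which paths of $Q'$ are images of admissible paths of $(Q,Z)$, and seeing that the radical-quotient map $\overline{\Delta}$ \emph{doubles} the idempotent at each relational vertex. This doubling is the crux: it is what forces length $2$ (rather than $1$) on $\Gamma\otimes_{\Lambda}U$ for $U$ supported at a relational vertex, and is the geometric/combinatorial content of the Zembyk excision reappearing at the module-theoretic level.
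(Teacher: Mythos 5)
Your proof is correct and follows essentially the same route as the paper: condition (1) is \Cref{lem:condition-1-of-nodality}, condition (2) is handled via \Cref{lem:primitive-idempotents-in-f-d-semilinear-quotients} applied to both $\Lambda$ and $\Gamma$ (with $Q'$ acyclic by \Cref{lem:acyclic-quiver} and hereditary by \Cref{prop:acylic-semilinear-implies-hereditary-ring}), and condition (3) reduces to computing $(\Gamma/\rad\Gamma)\otimes_{\Lambda/\rad\Lambda}U$ and observing that the image of the idempotent $\overline{e}_{v}$ is either primitive or a sum of two primitives. The only cosmetic differences are that you make the path-bijection underlying $\rad(\Lambda)=\rad(\Gamma)$ explicit, and you derive the reduction $\Gamma\otimes_{\Lambda}U\cong(\Gamma/\rad\Gamma)\otimes_{\Lambda/\rad\Lambda}U$ inline from $\rad(\Lambda)=\rad(\Gamma)$ rather than citing Zembyk's lemma from \cite{Zembyk-structure-of-finite-dim-nodal-algs} as the paper does; both are valid and yield the same bound.
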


\begin{proof}
Recall the notation from \Cref{gentle-implies-nodal-setup}. 
By \Cref{lem:condition-1-of-nodality} there is a embedding of $K$-rings $\Delta\colon \Lambda\to \Gamma$ satisfying  condition (1) of \Cref{def:semilinear-nodal}. 

By \Cref{lem:acyclic-quiver} the quiver $Q'=Q^{\zembyk}(Z)$ is acyclic, and so a trivial application of \Cref{lem:primitive-idempotents-in-f-d-semilinear-quotients} shows that $\rad(\Gamma)$ is generated by the arrows in $Q'$ and hence the images of the arrows in $Q$. 
Since $Z$ consists of paths of length $2$, and since $\Lambda$ is finite-dimensional, another application of \Cref{lem:primitive-idempotents-in-f-d-semilinear-quotients} shows that  $\rad(\Lambda)$ is generated by the arrows in $Q$. 
Hence condition (2) of \Cref{def:semilinear-nodal} holds. 

Another conclusion of  \Cref{lem:primitive-idempotents-in-f-d-semilinear-quotients}   is that $(e_{v}+I\colon v\in Q_{0})$  defines a complete and orthogonal sequence of primitive idempotents in $\Lambda$.  
Hence by the theory of semiperfect rings the modules $\Lambda e_{v}/Ae_{v}$ constitute a complete list of simple left $\Lambda$-modules; see \cite[Theorem 25.3]{Lam-first-course-noncomm-rings}. 
Thus to verify condition (3) of \Cref{def:semilinear-nodal} holds it suffices to let $U=\Lambda e_{v}/Ae_{v}$ for some $v\in Q_{0}$ and prove that $\Gamma\otimes_{\Lambda} U$ has length at most $2$ as a left $\Lambda$-module. 
By \cite[p. 469, Lemma 1]{Zembyk-structure-of-finite-dim-nodal-algs} it is equivalent to prove that $\overline{\Gamma}\otimes_{\overline{\Lambda}} U$ has length at most $2$ as a left $\overline{\Lambda}$-module where $\overline{\Lambda}=\Lambda/\rad(\Lambda)$ and $\overline{\Gamma}=\Gamma/\rad(\Gamma)$. 

We now claim, and it is sufficient to prove, that $\overline{\Gamma}\otimes_{\overline{\Lambda}} U$ is generated by at most $2$ elements as a $K$-module. 
As we saw above, by \Cref{lem:primitive-idempotents-in-f-d-semilinear-quotients} the $K$-module $\overline{\Gamma}$ is spanned over $K$ by the cosets $e_{w}+\rad(\Gamma)$ with $w\in Q_{0}'$. 
Likewise, as $e_{u}\Lambda e_{v}\subseteq e_{u}Ae_{v}$ for any vertex $u\neq v$, we have that $e_{v}+I$ is a  $K$-basis for $U$. 
Hence the pure tensors defined by
\[
x_{w}=(e_{w}+\rad(\Gamma))\otimes (e_{v}+I)\quad (w\in Q_{0}')
\]
constitute a $K$-basis for $\overline{\Gamma}\otimes_{\overline{\Lambda}}U$. Note that $x_{w}=(y_{w}+\rad(\Gamma))\otimes (e_{v}+I)$ where $y_{w}=e_{w}\Delta(e_{v}+I)$. 
It suffices to show that $y_{w}=0$ for all but at most $2$ values of $w$. 

Firstly suppose $v\in Q_{0}\setminus Q_{0}^{r}$. 
Then $\Delta(e_{v}+I)=e_{v'}$ and so $y_{w}=e_{w}e_{v'}$ meaning  $y_{w}\neq 0$ if and only if $w=v'$. 
Secondly suppose $v\in Q_{0}^{r}$. 
Then $\Delta(e_{v}+I)=e_{v(\sharp)}+e_{v(\flat)}$ and so $y_{w}=e_{w}e_{v(\sharp)}+e_{w}e_{v(\flat)}$, meaning  $y_{w}\neq 0$ if and only if $w=v(\sharp)$ or $w=v(\flat)$. 
\end{proof}

\begin{rem}
\label{rem-conjecture}
    In recent work of the second author and Labardini-Fragoso \cite{bennetttennenhaus2023semilinear} 
    tensor algebras of bimodules defined by species with potential are considered, and shown to be morita equivalent to certain quotients of semilinear path algebras; see \cite[Theorem 1]{bennetttennenhaus2023semilinear}. 
    These quotients are reminiscent of the \emph{skewed}-\emph{gentle} algebras introduced Bessenrodt and  Holm \cite{bessholmskewgentle}, which generalise gentle algebras in the classical sense introduced in \cite{assem-skowronski-gentle}. Note that Zembyk's main result in  \cite{Zembyk-Skewed-Gentle-A} shows that skewed-gentle algebras are nodal. 
    Hence we conjecture that \Cref{thm-semilinear-fd-gentle-implies-nodal} generalises to a class of quotients of semilinear path algebras which includes those considered in \cite{bennetttennenhaus2023semilinear}. 
\end{rem}

\subsection{Semilinear tiling algebras}\label{subsec:SemilinearTilingAlgebra}

\begin{setup}
Throughout Section \ref{subsec:SemilinearTilingAlgebra}, let $\overline{Q} = (Q,Z)$ denote a locally gentle pair. 
\end{setup}

In order to realise a semilinear locally gentle algebra $K_{\boldsymbol{\sigma}}Q/\langle Z \rangle$ by means of a surface, we will use the injection $\mathcal{L}$ from arrows of $\overline{Q}$ to $\mathcal{F}(D_{\overline{Q}} \cup R^*_{\overline{Q}})$ in Corollary \ref{cor:ArrowsToFaces} to include labels to faces. 

\begin{defn}\label{def:SemilinearTiling}
A \emph{labeled tiling} is a tuple $(S,M,D,R^*,\boldsymbol{\ell})$ such that 
\begin{enumerate}
    \item there is a partitioning $M = V \cup 
  V^*$ such that the elements of $V$ and $V^*$ alternate on each boundary component of $S$ and such that there exist dual cellular dissections $D, D^*$ with endpoints in $V$ and $V^*$ respectively;
    \item $R^*  \subseteq D^*$ is such that each $F \in \mathcal{F}(D \cup R^*)$ is a disk with either \begin{itemize}
        \item one side of $F$ coming from from $D$ and all other sides from the boundary or $R^*$ (type 1) or
         \item two consecutive sides of $F$ coming from $D$ and all other sides from the boundary or $R^*$ (type 2); and
    \end{itemize}
    \item $\boldsymbol{\ell}$ is a function from the set of polygons of type 2 to $\Aut(K)$. 
\end{enumerate}
\end{defn}

Note that the map $\mathcal{L}\colon  Q_1 \to \mathcal{F}(D_{\overline{Q}} \cup R^*_{\overline{Q}})$ from Corollary \ref{cor:ArrowsToFaces} can be improved to a bijection when we restrict the image to faces of type 2. 

Given a cellular dissection $D$ of $(S,V)$, we say that the associated locally gentle algebra from the locally gentle pair $\overline{Q}_D = (Q_D,Z_D)$, as described in \S\ref{sec:SurfaceDissection}, is the \emph{tiling algebra} of $(S,M,D)$. Now we use the additional information of a labeled tiling to define a semilinear tiling algebra. 

\begin{defn}\label{def:SemilinearTilingAlgebra}
Let $(S,V \cup V^*,D,R^*,\boldsymbol{\ell})$ be a labeled tiling. The associated \emph{semilinear tiling algebra} $K_{\boldsymbol{\sigma_\ell}}Q_D/I_D$ has the tiling algebra of $(S,V,D)$ as underlying algebra and assigns $\boldsymbol{\ell} \circ \mathcal{L}(a) \in \text{Aut}(K)$ to each $a\in Q_1$.  
\end{defn}

\begin{prop}
Every semilinear locally gentle algebra is a semilinear tiling algebra.
\end{prop}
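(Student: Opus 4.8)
The plan is to feed the underlying locally gentle pair into the Palu--Pilaud--Plamondon construction, adjoin the subset $R^{*}$ coming from the relational vertices, and then transport the automorphism data $\boldsymbol{\sigma}$ from the arrows of $Q$ onto faces of the resulting tiling by means of the injection $\mathcal{L}$ of \Cref{cor:ArrowsToFaces}.

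Concretely, write $\Lambda=K_{\boldsymbol{\sigma}}Q/\langle Z\rangle$ with $\overline{Q}=(Q,Z)$ locally gentle. First I would invoke the construction recalled in \Cref{sec:SurfaceDissection} (see \cite{Palu-Pilaud-Plamondon-non-kissing-non-crossing}) to produce the surface $(S_{\overline{Q}},V_{\overline{Q}}\cup V^{*}_{\overline{Q}})$ together with the dual cellular dissections $D_{\overline{Q}},D^{*}_{\overline{Q}}$, for which the locally gentle pair $\overline{Q}_{D_{\overline{Q}}}$ recovered from $(S_{\overline{Q}},V_{\overline{Q}},D_{\overline{Q}})$ is $\overline{Q}$ itself; hence the tiling algebra of $(S_{\overline{Q}},V_{\overline{Q}},D_{\overline{Q}})$ is $KQ/\langle Z\rangle$. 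Then I would take $R^{*}=R^{*}_{\overline{Q}}$, the set of arcs of $D^{*}_{\overline{Q}}$ dual to relational vertices of $\overline{Q}$.

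Next I would check that the tuple $(S_{\overline{Q}},V_{\overline{Q}}\cup V^{*}_{\overline{Q}},D_{\overline{Q}},R^{*}_{\overline{Q}})$ satisfies conditions (1) and (2) of \Cref{def:SemilinearTiling}. Condition (1) is immediate from the Palu--Pilaud--Plamondon model: $V_{\overline{Q}}$ and $V^{*}_{\overline{Q}}$ alternate on each boundary component and $D_{\overline{Q}},D^{*}_{\overline{Q}}$ are dual cellular dissections with endpoints in $V_{\overline{Q}}$ and $V^{*}_{\overline{Q}}$ respectively. For condition (2), \Cref{lem:DivideIntoSmallerFaces} tells us that every $F\in\mathcal{F}(D_{\overline{Q}}\cup R^{*}_{\overline{Q}})$ is a disk bounded by at most two arcs of $D_{\overline{Q}}$, consecutive and sharing an endpoint lying on $F$ when there are two; and since each such $F$ refines a disk face of the cellular dissection $D_{\overline{Q}}$ cut along arcs of $R^{*}_{\overline{Q}}$, and every arc of $R^{*}_{\overline{Q}}$ meets the boundary of such a face only at points lying on its $D_{\overline{Q}}$-edges, one sees that no face of $\mathcal{F}(D_{\overline{Q}}\cup R^{*}_{\overline{Q}})$ can have zero $D_{\overline{Q}}$-sides. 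Thus every face is of type $1$ or of type $2$ in the sense of \Cref{def:SemilinearTiling}.

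Having fixed the tiling, I would define $\boldsymbol{\ell}$ on the set of type-$2$ faces by $\boldsymbol{\ell}=\boldsymbol{\sigma}\circ\mathcal{L}^{-1}$; this uses the observation recorded after \Cref{def:SemilinearTiling} that $\mathcal{L}$ restricts to a bijection from $Q_{1}$ onto the set of type-$2$ faces, so that $\mathcal{L}^{-1}$ makes sense on those faces and $\boldsymbol{\ell}$ is well defined. The tuple $(S_{\overline{Q}},V_{\overline{Q}}\cup V^{*}_{\overline{Q}},D_{\overline{Q}},R^{*}_{\overline{Q}},\boldsymbol{\ell})$ is then a labeled tiling, and by \Cref{def:SemilinearTilingAlgebra} its semilinear tiling algebra has underlying algebra $KQ/\langle Z\rangle$ and assigns to each arrow $a$ the automorphism $\boldsymbol{\ell}\bigl(\mathcal{L}(a)\bigr)=\boldsymbol{\sigma}\bigl(\mathcal{L}^{-1}(\mathcal{L}(a))\bigr)=\sigma_{a}$, because $\mathcal{L}(a)$ is a type-$2$ face. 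Hence this semilinear tiling algebra is exactly $K_{\boldsymbol{\sigma}}Q/\langle Z\rangle=\Lambda$, proving the proposition. The argument has no serious obstacle; the one point needing care is the face classification just described --- ruling out faces with no $D_{\overline{Q}}$-side and confirming that $\mathcal{L}$ restricts to a bijection onto the type-$2$ faces --- which is precisely what makes the label function $\boldsymbol{\ell}$ meaningful.
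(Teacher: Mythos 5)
Your proof follows exactly the same approach as the paper's: invoke the Palu--Pilaud--Plamondon surface model to realise the underlying locally gentle algebra as a tiling algebra, use \Cref{lem:DivideIntoSmallerFaces} to justify that adjoining $R^{*}_{\overline{Q}}$ yields a tiling satisfying \Cref{def:SemilinearTiling}, and define the label map by $\boldsymbol{\ell}(F)=\sigma_{\mathcal{L}^{-1}(F)}$ on type-$2$ faces. The extra detail you provide (ruling out faces with no $D_{\overline{Q}}$-side, confirming $\mathcal{L}$ restricts to a bijection onto type-$2$ faces) is a correct elaboration of what the paper leaves implicit.
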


\begin{proof}
Consider a semilinear locally gentle algebra  $K_{\sigma}Q/I$.  
Theorem 4.10 of \cite{Palu-Pilaud-Plamondon-non-kissing-non-crossing} gives a tiling $(S,M,D)$ such that $KQ/I$ is the tiling algebra $KQ_D/I_D$.  By Lemma \ref{lem:DivideIntoSmallerFaces}, we can add additional arcs $R^*$ from the dissection dual to $D$ such that $D \cup R^*$ satisfies the conditions of Definition \ref{def:SemilinearTiling}. Then, by choosing $\boldsymbol{\ell}(F) = \sigma_{\mathcal{L}^{-1}(F)}$ for every face of type 2, $F \in \mathcal{F}(D \cup R^*)$, we see that the semilinear tiling algebra $K_{\sigma_{\boldsymbol{\ell}}}Q_D/I_D$ from $(S,M,D,R^*,\boldsymbol{\ell})$ is exactly $K_{\boldsymbol{\sigma}}Q/I$. 
\end{proof}

\begin{example}\label{running-example:tiling}
Consider the locally gentle pair $\overline{Q}$ from \cref{running-example:zembyk-excision}. For each $a\in Q_1$, choose $\sigma_a\in\Aut(K)$. 
Figure \ref{fig:semilinear tiling} gives the labeled tilting corresponding to $K_{\boldsymbol{\sigma}}Q/\langle Z\rangle $.

\begin{figure}[H]
    \centering
    \begin{tikzpicture}[inner sep=1, outer sep = 0]
        \draw[rounded corners] (0,1)--(0,5)--(10,5)--(10,0)--(0,0)--(0,1);
        
        \begin{scope}[blue]
        \node[label=left:$a$]  (a) at (0,4) {$\bullet$};
        \node[label=below:$b$] (b) at (2,0) {$\bullet$};
        \node[label=below right:$c$] (c) at (4,3) {$\bullet$};
        \node[label=above:$d$] (d) at (6,5) {$\bullet$};
        \node[label=below:$e$] (e) at (7,0) {$\bullet$};
        \node[label=right:$f$] (f) at (10,3) {$\bullet$};

        \draw (a.center) to[bend left] node[above right] {1} (c.center);
        \draw (d.center) to[bend left] node[above left] {2} (c.center);
        \draw (b.center) to[bend left] node[above left] {3} (c.center);
        \draw (b.center) to[bend left] node[above right] {4} (e.center);
        \draw (d.center) to[bend left] node[below right] {5} (e.center);
        \draw (f.center) to[bend left] node[below right] {6} (e.center);

        \end{scope}
        \begin{scope}[red, densely dashed]
        \node (i) at (3,5) {$\bullet$};
        \node (ii) at (0,1) {$\bullet$};
        \node (iii) at (6,3) {$\bullet$};
        \node (iv) at (4.5,0) {$\bullet$};
        \node (v) at (8,5) {$\bullet$};
        \node (vi) at (10,1) {$\bullet$};
        
        \draw (i.center) to[bend right] (iii.center);
        \draw (ii.center) to[bend right] (iii.center);
        \draw (iv.center) to[bend right] (iii.center);
        \draw (v.center) to[bend right] (iii.center);
        
        \node at (3.2, 4) {$\sigma_\alpha$};
        \node at (4, 2) {$\sigma_\beta$};
        \node at (1.5, 1.2) {$\sigma_\nu$};
        \node at (6, 4) {$\sigma_\delta$};
        \node at (6.5, 1.5) {$\sigma_\epsilon$};
        \node at (8, 2) {$\sigma_\eta$};
        \node at (5, 1.5) {$\sigma_\zeta$};

        \end{scope}
         
        \begin{scope}[lightgray, densely dashed]
        \draw (ii.center) to[bend right] (i.center);
        \draw (v.center) to[bend right] (vi.center);
        \end{scope}
        
    \end{tikzpicture}

    \caption{The labeled tiling corresponding to the semilinear gentle algebra from Example \ref{running-example:zembyk-excision}. The boundary of the surface $S$ is in black. The marked points $M=V\cup V^*$ are in blue (for $V$) and red (for $V*$). The dissection $D$ is in blue, while the partial dual dissection $R^*$ is in red with dashed lines. The face labels provided by $\boldsymbol{\ell}$ are in red.}
    \label{fig:semilinear tiling}
\end{figure}
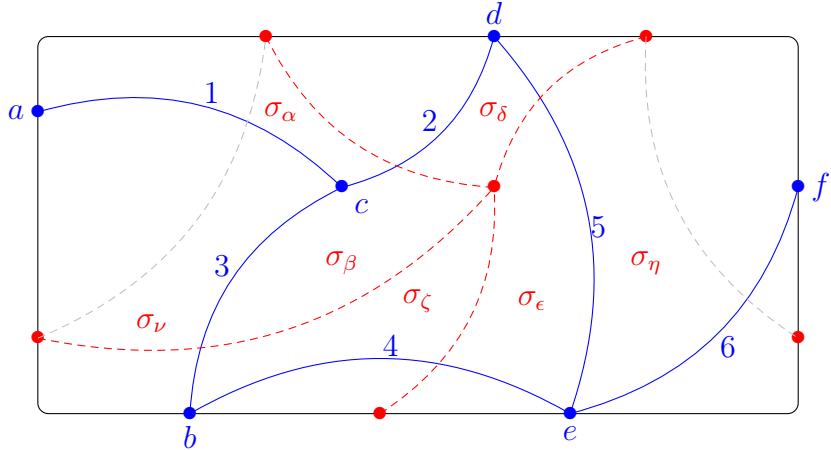

\end{example}

\section{Geometric model for semilinear representations}
\label{sec-geometric-model}
 The first three subsections of this section build to a classification of the finite-dimensional modules of a semilinear locally gentle algebra (see Theorem \ref{BTCB-main-theorem}). This comes from \cite{BenTenCraBoe}, which considers a strictly larger class of algebras. In particular, semilinear locally gentle algebras are semilinear clannish algebras that have no \emph{special loops}, have \emph{quadratic} zero-relations, but have a restriction on the number of zero-relations.

In the final section, we review the geometric model for indecomposable modules of gentle algebras following \cite{Baur-Coelho-Simoes-geometric-model-module-cat-gentle}. We then employ the work of \cite{Palu-Pilaud-Plamondon-non-kissing-non-crossing} to extend this model to include modules of locally gentle algebras. We conclude with a geometric construction which includes semilinearity into this story by using the face labels constructed in \S\ref{subsec:SemilinearTilingAlgebra}.


\subsection{Admissible words}
\label{subsec-admissible-words}

\begin{setup}
Throughout \S\ref{subsec-admissible-words}, let $Q$ be a quiver and $Z$ be a set of quadratic zero-relations such that $(Q,Z)$ is locally gentle. See \Cref{def-locally-gentle-pair} for details.
\end{setup}

\begin{defn}\label{defn:letters} 
A \emph{letter} is a symbol of the form $a$  or $a^{-1}$ with $a\in Q_{1}$. 
Those of the form $a$ are \emph{direct letters}, and those of the form $a^{-1}$ are \emph{inverse letters}. 
Each direct letter comes equipped with a head and a tail, namely those of the corresponding arrow. 
The head and tail of an inverse letter are defined by $h(a^{-1})=t(a)$ and $t(a^{-1})=h(a)$ for each $a\in Q_{1}$. 
The \emph{inverse} of a letter refers to the involution $a\leftrightarrow a^{-1}$ of the set of letters. 
\end{defn}

\begin{defn}\label{defn:words}
By a \emph{trivial word} we mean a symbol of the form $1_{v,\epsilon}$ where $v\in Q_{0}$ and $\epsilon=\pm 1$. 
By \emph{non}-\emph{trivial word} $C$  we refer to a  sequence $C=\dots C_{i}\dots$ of letters $C_{i}$, either of the form $C_{1}\dots C_{n}$ (referred to as \emph{finite}) or of the form $\dots C_{-1}C_{0}\vert C_{1}C_{2}\dots$ (referred to as \emph{infinite}), subject to the conditions that for all appropriate $i$ we have that $t(C_{i})=h(C_{i+1})$ and that  $C_{i}^{-1}\neq C_{i+1}$. 
In this situation we write $v_{i}(C)$ for the vertex $t(C_{i})=h(C_{i+1})$ in $Q$. For finite words, $v_0(C)=h(C_{1})$ and $v_n(C)=t(C_{n})$.  
By a \emph{word} we mean either a trivial word, or a non-trivial word that may be finite or infinite. 
\end{defn}

\begin{defn}\label{defn:admissible_word}
We say a word $C$ is \emph{admissible} if, for all consecutive pairs $C_i, C_{i+1}$, $C_{i}C_{i+1}\notin Z$ when $C_{i}$ and $C_{i+1}$ are direct, and $C^{-1}_{i+1}C^{-1}_{i}\notin Z$ when $C_{i}$ and $C_{i+1}$ are inverse. 
\end{defn}

\begin{rem}
    Our notion of an admissible word here is equivalent to the notion of an admissible word from \cite{BenTenCraBoe}. Unlike  \cite{BenTenCraBoe}, in this article there are no ${*}$-letters, and  so every word is \emph{end}-\emph{admissible}. 
Likewise, in this article, the only infinite words are $\BZ$-\emph{indexed}. 
Similarly, since $Z$ is quadratic, a path $p= a_{1}\dots  a_{n}$ is \emph{relation}-\emph{admissible} if and only if it is admissible, meaning that each subpath $ a_{i} a_{i+1}$ of length $2$ lies outside $Z$. 
\end{rem}

\begin{defn}\label{defn:equivalent-words}
We define the \emph{inverse} $C^{-1}$ of a word $C$ as follows:
\begin{description}
    \item[Trivial word] For  $C=1_{v,\epsilon}$ we let  $C^{-1}=1_{v,-\epsilon}$.
    \item[Finite word] For $C=C_{1}\dots C_{n}$ we let  $C^{-1}=C_{n}^{-1}\dots C_{1}^{-1}$
    \item[Infinite word] For $C=\dots C_{-1}C_{0}\vert C_{1}C_{2}\dots$ we let  $C^{-1}=\dots C^{-1}_{1}C^{-1}_{0}\vert C^{-1}_{-1}C^{-1}_{-2}\dots$
\end{description}
\end{defn}

 \begin{defn}\label{defn:shift}
 For any $d\in \BZ$ we define the \emph{shift} $C[d]$ of a word $C$ by $C[d]=C$ when $C$ is trivial or finite, and by $C[d]=\dots C_{d-1}C_{d}\vert C_{d+1}C_{d+2}\dots$ when $C=\dots C_{-1}C_{0}\vert C_{1}C_{2}\dots$ is infinite. 
 \end{defn}
 
\begin{defn}\label{defn:periodic}
We say that an infinite word $C$ is \emph{periodic} if $C[n]=C$ for some $n>0$, and we call the minimal such $n>0$ the  \emph{period} of $C$. 
\end{defn}
 
 Define a relation on the set of all words $C$ and $D$ by declaring $C\sim D$ if and only if there exists $d\in \BZ$ such that $D=C[d]$ or $D=C^{-1}[d]$. 
 Note $\sim$ is an equivalence relation that preserves and reflects the properties of being trivial, finite, infinite and admissible. 

 \begin{example}\label{running-example-words}
 In the gentle pair from Example \ref{running-example-quiver}, examples of admissible finite words include $\nu \zeta^{-1}\sim \zeta\nu^{-1}$ and $\eta\delta^{-1}\alpha\nu\beta\alpha\nu \zeta^{-1} \sim \zeta \nu^{-1} \alpha^{-1} \beta^{-1} \nu^{-1} \alpha^{-1} \delta \eta^{-1}$.

An example of an infinite word is given by infinite repetitions of the sequence  $\nu\beta\alpha$.
     
\end{example}

\subsection{Associated bimodules}
\label{subsec-associated-bimodules}

Let $(Q,Z)$ be locally gentle and let $C$ be an admissible word. We define a quiver from a word $C$ via the following algorithm.

\begin{defn}\label{defn:quiver-from-word} 
For any word $C$, we define a \emph{quiver $Q(C)$}, by firstly defining the set $V(C)$ of vertices and the set $A(C)$ of arrows, as follows. 

\noindent \textbf{Vertices}: We go through the cases for various $C$:
\begin{itemize}
\item If $C$ is trivial, let $V(C)=\{0\}$.
\item If $C=C_{1}\dots C_{n}$ is finite, let $V(C)=\{0,\dots,n\}$.
\item If $C$ is infinite, let $V(C)=\BZ$. 
\end{itemize}
\noindent \textbf{Arrows}: Define $A(C)$ and the head and tail functions $h_{C},t_{C}\colon A(C)\to V(C)$ as follows.\\
\noindent If $i,i-1\in V(C)$ then there is exactly one arrow $\theta_{i}$ incident at both $i$ and $i-1$, with 
\[
\begin{array}{ccc}
h_{C}(\theta_{i})=i-1,&t_{C}(\theta_{i})=i,& (C_{i}\text{ direct}).
\\
h_{C}(\theta_{i})=i,&t_{C}(\theta_{i})=i-1,& (C_{i}\text{ inverse}).
\end{array}
\]
The underlying graph of the quiver $Q(C)$ is $\mathbb{A}_{n}$ when $C$ is finite and ${}_{\infty}\mathbb{A}_{\infty}$ when $C$ is infinite: vertices increase from left to right; and direct (respectively, inverse) letters $C_{i}$ give left-pointing (respectively, right-pointing) arrows $\theta_{i}$ in $Q(C)$.

With $Q(C)$ defined we define a homomorphism of quivers $f_{C}\colon Q(C)\to Q$ on vertices by   $V(C)\ni i\mapsto v_{i}(C)\in Q_{0}$ and on arrows by  $A(C)\ni \theta_{i}\mapsto a\in Q_{1}$ whenever $C_{i}=a^{\pm1}$. 
\end{defn}

\begin{example}\label{running-example-associated-quivers}
Taking  $C=\eta\delta^{-1}\alpha\nu\beta\alpha\nu \zeta^{-1}$ in Example \ref{running-example-quiver}, the quiver $Q(C)$ is \[0\xleftarrow{\theta_{1}}1 \xrightarrow{\theta_{2}}2\xleftarrow{\theta_{3}}3\xleftarrow{\theta_{4}}4\xleftarrow{\theta_{5}}5\xleftarrow{\theta_{6}}6\xleftarrow{\theta_{7}}7\xrightarrow{\theta_{8}}8.\] and its image under $f_{C}$ may be depicted by
  \[6\xleftarrow{\eta}5 \xrightarrow{\delta}2\xleftarrow{\alpha}1\xleftarrow{\nu}3\xleftarrow{\beta}2\xleftarrow{\alpha}1\xleftarrow{\nu}3\xrightarrow{\zeta}4.\]
 \end{example}

\begin{defn}\label{defn:natural-automorphisms} 
\cite[Definition 2.20]{BenTenCraBoe} 
Let $\boldsymbol{\sigma}\colon Q_{1}\to \Aut(K)$ be a function. Let $\pi_{0}$ be the identity on $K$. Suppose $i,j\in V(C)$ and $\vert j-i\vert =1$, so that there is precisely one arrow $\theta\in A(C)$ incident at $i$ and $j$, say where $f_{C}(\theta)=a$. If $\pi_{t(\theta)}$ is defined, then let $\pi_{h(\theta)}=\sigma_{a}\pi_{t(\theta)}$, and if $\pi_{h(\theta)}$ is defined, let $\pi_{t(\theta)}=\sigma^{-1}_{a}\pi_{h(\theta)}$. 
Hence iterating defines $\pi_{i}$ for all $i\in V(C)$. Note that $\pi_{h(\theta)}=\sigma_{f_{C}(\theta)}\pi_{t(\theta)}$ for each $\theta\in A(C)$. 
\end{defn}

\begin{example}\label{ex:pi}
Let $C = \nu \zeta^{-1}$. Then, $\pi_0 = id_K, \pi_1 = \sigma_\nu^{-1}$, and $\pi_2 = \sigma_\zeta \sigma_\nu^{-1}$.

Now let $C = \eta\delta^{-1}\alpha\nu\beta\alpha\nu \zeta^{-1}$. The first few values of $\pi_i$ for this word are as follows: $\pi_0 = id_K, \pi_1 = \sigma_\eta^{-1}, \pi_2 = \sigma_\delta \sigma_{\eta}^{-1}, \pi_3 = \sigma_\alpha^{-1}\sigma_\delta\sigma_{\eta}^{-1}$, and $\pi_4 = \sigma_\nu^{-1}\sigma_\alpha^{-1}\sigma_\delta\sigma_{\eta}^{-1}$. 
\end{example}

We now define the left and right module structures.

\begin{defn}\label{defn:left-module-structure}
\cite[Definition 2.17]{BenTenCraBoe}
Let $\Lambda=K_{\boldsymbol{\sigma}}Q/\langle Z\rangle$. 
Define the \emph{left $\Lambda$-module} $M(C)=F(C)/T(C)$ where $F(C)=\bigoplus_{i\in V(C)}\Lambda b_{i}$, the free left $\Lambda$-module with basis $(b_{i}\colon i\in V(C))$, and $T(C)$ is the submodule of $F(C)$ generated by
\[
\begin{array}{cc}
e_{f_{C}(i)}b_{i}-b_{i}, & (i\in V(C)), \\
f_{C}(\theta)b_{t(\theta)}-b_{h(\theta)}, & (\theta\in A(C)),\\
 a b_{j}, & (j\in V(C),\,a\in Q_{1}\setminus  \{f_{C}(\theta)\colon \theta\in A(C),\,t_{C}(\theta)=j\}).
\end{array} 
\]
\end{defn}


\begin{rem} 
Note that if $i\neq f_{C}(j)$ for all $j\in V(C)$ then $e_{i}b_{j}=-e_{i}(e_{f_{C}(j)}b_{j}-b_{j})+e_{i}e_{f_{C}(j)}b_{j}$, but also  $e_{i}e_{f_{C}(j)}=0$ inside $\Lambda$, and so $e_{i}b_{j}\in T(C)$.
By \cite[Lemma 2.19]{BenTenCraBoe} the elements $b_{i}$ with $i\in V(C)$  form a left $K$-basis of the module $M(C)$ if and only if the word $C$ is admissible in the sense of \Cref{defn:words}. 
\end{rem}

\begin{defn}\label{defn:right-module-structure}
Recall the function $\pi\colon V(C)\to \Aut(K)$ defined in \Cref{defn:natural-automorphisms}. By \cite[Lemma 2.21]{BenTenCraBoe}, there is a unique $\Lambda$-$K$-bimodule action on $M(C)$ such that $b_{i}\lambda=\pi_{i}(\lambda)b_{i}$ for each $\lambda\in K$. If $C$ is infinite and periodic of period $n>0$, then we let $\pi_{C}=\pi_{n}^{-1}$. Then, by \cite[Lemma 2.26]{BenTenCraBoe}, there is a unique way to extend the \emph{right $K$-module structure} to make $M(C)$ into a $\Lambda$-$K[t,t^{-1};\pi_{C}]$-bimodule such that $b_{i}t=b_{i-n}$ for each $i\in V(C)$. Furthermore, in this case, as a right $K[t,t^{-1};\pi_{C}]$-module $M(C)$ is free with basis $b_{0},\dots,b_{n-1}$. 
\end{defn}

\subsection{Strings, bands, and parameterising rings}
\label{subsec-string-and-band-modules}

If $C$ is a string, meaning $C$ is finite as an admissible word, then let $\Lambda(C)=K$. 
If $C$ is a band, meaning $C$ is infinite and periodic, then let $\Lambda(C)=K[x,x^{-1};\pi_{C}]$. 
When we say that a module over a $K$-ring is finite-dimensional, we will always mean that the module is finitely generated as a $K$-module.  
If $C$ is a string then, of course, the only finite-dimensional indecomposable $\Lambda(C)$-module is $K$. 
If $C$ is a band, then $\Lambda(C)$ is a (possibly non-commutative) principal ideal domain, and hence the finite-dimensional indecomposable $\Lambda(C)$-modules have been classified  by Jacobson \cite[Chapter 3, \S 8]{Jacobson-theory-of-rings}. 
We now recall the main result from \cite{BenTenCraBoe}. 

\begin{thm}
\label{BTCB-main-theorem}
    \cite[Main Theorem, p. 2]{BenTenCraBoe}
    Let $\Lambda$ be a semilinear locally gentle algebra. 
    As $C$ runs through representatives of the equivalence classes of strings and bands, and as $V$ runs through a complete set of pairwise non-isomorphic finite-dimensional indecomposable $\Lambda(C)$-modules, the modules $M(C)\otimes_{\Lambda(C)}V$ run through a complete list of pairwise non-isomorphic finite-dimensional indecomposable $\Lambda$-modules. 
\end{thm}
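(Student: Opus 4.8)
The plan is to deduce Theorem~\ref{BTCB-main-theorem} directly from the classification of finite-dimensional indecomposable modules over semilinear clannish algebras established in \cite{BenTenCraBoe}, by checking that a semilinear locally gentle algebra $\Lambda=K_{\boldsymbol{\sigma}}Q/\langle Z\rangle$ is a special case of the algebras treated there and that the combinatorial data (strings, bands, the bimodules $M(C)$, the parameterising rings $\Lambda(C)$, and the reduced-module construction $M(C)\otimes_{\Lambda(C)}V$) specialises correctly. Concretely, I would first recall that a semilinear clannish algebra in the sense of \cite{BenTenCraBoe} is built from a \emph{clan}, i.e. a quiver with a distinguished set of \emph{special loops} together with quadratic relations; a semilinear locally gentle algebra is obtained by taking the clan to have \emph{no} special loops and the relation set $Z$ to be the quadratic zero-relations of a locally gentle pair $(Q,Z)$. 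I would note (this is essentially the content of \Cref{def-locally-gentle-pair}) that conditions (i) and (ii) for a locally gentle pair are exactly the combinatorial constraints making $(Q,Z)$ — viewed as a clan with empty set of special loops — admissible as input to the machinery of \cite{BenTenCraBoe}, so that $\Lambda$ is indeed a semilinear clannish algebra.

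Next I would match the invariants. Since there are no special loops, the ``asymmetric words'' and ${*}$-letters of \cite{BenTenCraBoe} do not occur, so the only words one needs are trivial words, finite admissible words (\emph{strings}), and $\BZ$-indexed infinite periodic admissible words (\emph{bands}); this is exactly the content of the Remark following \Cref{defn:admissible_word} and the discussion in \S\ref{subsec-string-and-band-modules}. Correspondingly, the parameterising ring $\Lambda(C)$ attached to a word $C$ in \cite{BenTenCraBoe} specialises: for a string it is $K$ (the only finite-dimensional indecomposable being $K$ itself), and for a band of period $n$ it is the skew Laurent polynomial ring $K[x,x^{-1};\pi_C]$ with $\pi_C=\pi_n^{-1}$ as in \Cref{defn:right-module-structure}. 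The bimodule $M(C)$ from \Cref{defn:left-module-structure} and its right $\Lambda(C)$-action from \Cref{defn:right-module-structure} are literally the ones constructed in \cite{BenTenCraBoe} (we have cited \cite[Definitions 2.17, 2.20, Lemmas 2.19, 2.21, 2.26]{BenTenCraBoe} for precisely these). Hence $M(C)\otimes_{\Lambda(C)}V$ is the reduced module of \cite{BenTenCraBoe} in this special case.

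With the dictionary in place, the theorem is then a direct transcription of \cite[Main Theorem]{BenTenCraBoe}: as $C$ ranges over representatives of $\sim$-equivalence classes of strings and bands and $V$ ranges over a complete irredundant list of finite-dimensional indecomposable $\Lambda(C)$-modules, the modules $M(C)\otimes_{\Lambda(C)}V$ form a complete irredundant list of finite-dimensional indecomposable $\Lambda$-modules. For bands one invokes in addition that $\Lambda(C)=K[x,x^{-1};\pi_C]$ is a (possibly noncommutative) principal ideal domain, so its finite-dimensional indecomposable modules are classified by Jacobson \cite[Chapter~3, \S 8]{Jacobson-theory-of-rings}; this makes the ``$V$'' part of the parameterisation concrete, though it is not strictly needed for the statement as phrased.

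The main obstacle is purely bookkeeping rather than mathematical: one must verify carefully that the restriction ``$(Q,Z)$ locally gentle'' really does land inside the hypotheses of \cite{BenTenCraBoe} — in particular that quadratic zero-relations with the locally gentle bound on in/out arrows at each vertex give a legitimate clan with no special loops and no ambiguity in the admissibility conditions — and that no finite-dimensional indecomposable is lost or double-counted when passing from the general clannish setup (which also produces modules from asymmetric/bispecial words via special loops) to our special case where such words are absent. Once this identification is made, no further argument is required, since every ingredient of the statement has already been set up in \S\ref{subsec-admissible-words}--\S\ref{subsec-string-and-band-modules} precisely so as to be the specialisation of the corresponding ingredient in \cite{BenTenCraBoe}.
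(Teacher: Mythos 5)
Your proposal takes essentially the same route as the paper: the paper states this result as a direct citation of the main theorem of \cite{BenTenCraBoe}, and the preceding subsections (\S\ref{subsec-admissible-words}--\S\ref{subsec-string-and-band-modules}), together with the opening paragraph of \S\ref{sec-geometric-model} and the remark after \Cref{defn:admissible_word}, set up exactly the dictionary you describe (semilinear locally gentle algebras as semilinear clannish algebras with no special loops and quadratic zero-relations; strings, bands, $M(C)$, $\Lambda(C)$, and $M(C)\otimes_{\Lambda(C)}V$ as specialisations of the corresponding constructions there). Your write-up simply makes explicit the reduction that the paper leaves to the reader, so it is correct and in agreement with the paper's approach.
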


\subsection{Geometric model}\label{subsec:our-model}

\begin{setup}\label{setup:our-model}
In Section \ref{subsec:our-model}, let  $\overline{Q} = (Q,Z)$ be a locally gentle pair. Let $(S_{\overline{Q}}, V_{\overline{Q}} \cup V^*_{\overline{Q}})$ be a surface with marked points and with pair of dual cellular dissections $D_{\overline{Q}}, D^*_{\overline{Q}}$ associated to $\overline{Q}$, as in \S\ref{subsec-surfaces-and-locally-gentle-pairs}. Now, for every bigon $F \in \mathcal{F}(D)$ with one edge on the boundary of $S_{\overline{Q}}$, we include a new marked point in $V_{\overline{Q}}$ on this boundary segment. We extend the notion of a labeled tiling to include these additional marked points.

We also let $\Lambda = K_{\boldsymbol{\sigma}}Q/I$ for a division ring $K$, a  map $\boldsymbol{\sigma}\colon Q_1 \to \Aut(K)$ and $I = \langle Z \rangle$. 
\end{setup}

In Theorem \ref{BTCB-main-theorem}, we saw that the strings and bands of a gentle pair determined the indecomposable modules of a corresponding semilinear algebra. Here, we connect this story with arcs and closed curves on dissected surfaces. We begin by comparing arcs and closed curves with words. We extend to the locally gentle case here. The correspondence between strings and arcs was already shown by Palu, Pilaud, and Plamondon \cite{Palu-Pilaud-Plamondon-non-kissing-non-crossing}.

\begin{prop}[Proposition 4.23 \cite{Palu-Pilaud-Plamondon-non-kissing-non-crossing}]\label{prop:bijectionPPP}
There is a bijection between equivalence classes of strings (i.e. finite words) of $\overline{Q}$ and equivalence classes of permissible arcs on the surface with dissection $(S_{\overline{Q}},V_{\overline{Q}} \cup V^*_{\overline{Q}},D_{\overline{Q}})$.
\end{prop}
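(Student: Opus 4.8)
The statement is \cite[Proposition 4.23]{Palu-Pilaud-Plamondon-non-kissing-non-crossing}: our \Cref{def:PermissibleArc} of a permissible arc is written precisely so as to coincide with their notion of a $D$-accordion, and our notion of equivalence of arcs (imported from \cite{Baur-Coelho-Simoes-geometric-model-module-cat-gentle}) is written precisely so as to match, on finite words, the relation identifying $C$ with $C^{-1}$. So the plan is to make this dictionary explicit by exhibiting the two mutually inverse maps and checking the equivalence relations are intertwined; the degenerate cases (arcs crossing at most one arc of $D_{\overline Q}$, which give trivial or length-one words) are dealt with by hand.

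First I would describe the map from arcs to words. Given a permissible arc $\gamma$, choose an orientation and let $\rho_1,\dots,\rho_d$ be the arcs of $D_{\overline Q}$ it crosses and $F_0,\dots,F_d$ the faces it traverses, as in \Cref{def:PermissibleArc}. Recalling from \S\ref{sec:SurfaceDissection} that the vertices of $Q=Q_{D_{\overline Q}}$ are the arcs of $D_{\overline Q}$ and its arrows the pairs of arcs that share an endpoint and co-bound a face, the turns of $\gamma$ at the points $p_1,\dots,p_{d-1}$ (which exist by permissibility clause~(3)) determine a word $C(\gamma)$ whose underlying sequence of vertices is $\rho_1,\dots,\rho_d$ and whose $j$-th letter lies on the arrow joining $\rho_j$ and $\rho_{j+1}$, taken direct or inverse according to whether $\rho_{j+1}$ immediately follows or immediately precedes $\rho_j$ in clockwise order at $p_j$. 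I would then check that no backtracking $C_{i+1}=C_i^{-1}$ occurs, because the crossings of a representative of $\gamma$ are counted minimally, and that $C(\gamma)$ is admissible in the sense of \Cref{defn:words}: by the construction of $Z_{D_{\overline Q}}$ in \S\ref{sec:SurfaceDissection} (three arcs bordering a common face), permissibility clause~(4) — that the disk cut out by $\gamma,\rho_j,\rho_{j+1}$ on the side of $p_j$ contains no point of $V^*$ — says exactly that the $j$-th length-two subword of $C(\gamma)$ does not factor through $Z_{D_{\overline Q}}$, and it also pins down the homotopy class of $\gamma$ between consecutive crossings. Since an arc is unoriented, reversing the chosen orientation replaces $C(\gamma)$ by $C(\gamma)^{-1}$, and homotopic representatives yield the same word; so $\gamma\mapsto C(\gamma)$ descends to a map on equivalence classes.

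For the inverse, given an admissible finite word $C=C_1\cdots C_n$ I would build an arc by concatenation: each letter $C_j$ lies on an arrow $a$, the arcs $\tau_{t(a)}$ and $\tau_{h(a)}$ share an endpoint and co-bound a face of $D_{\overline Q}$, and one draws an oriented segment through that face crossing from one of these arcs to the other, on the side of the shared vertex dictated by whether $C_j$ is direct or inverse. Consecutive segments glue because $t(C_j)=h(C_{j+1})$, and admissibility of $C$ guarantees permissibility clause~(4); at the two ends the first and last segments are extended to marked points of $V$ inside the end faces, using the extra marked points of \Cref{setup:our-model} when the end face is a bigon with a boundary edge, and winding at most once around the $V^*$-puncture when the end face is internal (clause~(2)). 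Each segment is determined up to homotopy rel its endpoints, and for a middle face clause~(2) forbids winding, so the construction is well-defined up to homotopy and up to the equivalence of arcs; reading a word off the result recovers $C$, and reconstructing an arc from $C(\gamma)$ recovers $\gamma$ up to homotopy and equivalence.

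The main obstacle is the bookkeeping around internal faces and the $V^*$-punctures: verifying that permissibility clauses~(2) and~(4) match admissibility together with the $Z_{D_{\overline Q}}$ of \Cref{def:DualDissections}/\Cref{Ex:DissectionToQuiver}, and that the equivalence of arcs of \cite{Baur-Coelho-Simoes-geometric-model-module-cat-gentle} collapses exactly the ambiguity in how a permissible arc terminates inside an internal face — an ambiguity that is invisible at the level of words — so that the two equivalence relations correspond. Because our surface retains the $V^*$-points in place of the unmarked boundary components used in \cite{Baur-Coelho-Simoes-geometric-model-module-cat-gentle}, each such step of that reference has to be re-run with a puncture in place of an unmarked boundary component; this routine re-checking, together with the small degenerate cases, is essentially the only work not already contained verbatim in \cite[Proposition 4.23]{Palu-Pilaud-Plamondon-non-kissing-non-crossing}.
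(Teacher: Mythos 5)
The paper does not actually prove this proposition: it is stated as a citation to Proposition~4.23 of \cite{Palu-Pilaud-Plamondon-non-kissing-non-crossing}, followed only by a one-paragraph description of how the arc-to-word dictionary works (recording the type-2 faces an arc traverses, with direct vs.\ inverse decided by whether the shared endpoint $p_j$ lies to the right or left of $\gamma$) and a pointer to the band-version proof of \Cref{prop:BijArcsStrings}. Your sketch is consistent with that description and essentially reproduces, for strings, the argument the paper carries out in full for bands, so the approach is the same; the only caution is a small convention mismatch to verify: you phrase the direct/inverse choice via clockwise order of $\rho_j,\rho_{j+1}$ at $p_j$, while the paper (in its discussion after the proposition and in the proof of \Cref{prop:BijArcsStrings}) phrases it via $p_j$ lying to the right of $\gamma$; these should coincide given the clockwise convention used to define arrows in \S\ref{sec:SurfaceDissection}, but it is worth checking that they do before asserting the two maps are mutually inverse.
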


We note that the same statement was shown in the gentle case in Theorem 3.8 of \cite{Baur-Coelho-Simoes-geometric-model-module-cat-gentle}. The correspondence from arcs to strings works by recording the labels of the faces of type 2 which an arc passes through when it crosses both bounding arcs from $D$. These faces are labeled with elements of $Q_1$, and the sign these have in the corresponding word is determined by whether the shared endpoint of these two arcs from $D$ which is on the boundary of this face lies to the left or right of $\gamma$ with its given orientation. As we will see in the proof of Proposition \ref{prop:BijArcsStrings}, with our conventions, we have a direct letter exactly when this endpoint lies to the right of $\gamma$. 

\begin{rem}
    There is a difference between the considerations made in this article and those made by Palu, Pilaud and Plamondon in \cite{Palu-Pilaud-Plamondon-non-kissing-non-crossing}. 
    On the one hand, any string module $M(C)$ we consider here is finite-dimensional over  $K$ since the word $C$ used to define it is finite. 
    On the other hand, the words used to define string modules  in \cite{Palu-Pilaud-Plamondon-non-kissing-non-crossing} may be infinite (but must eventually be periodic), and correspond to what they call \emph{accordions}; see \cite[Definition 3.8]{Palu-Pilaud-Plamondon-non-kissing-non-crossing}. 
    Note that both in this article and in \cite{Palu-Pilaud-Plamondon-non-kissing-non-crossing}, doubly-infinite periodic words are used to define band modules.  
\end{rem}

We show a similar statement relating bands and closed curves on these surfaces, thus generalizing Proposition 3.9 in \cite{Baur-Coelho-Simoes-geometric-model-module-cat-gentle}. Recall from Definition \ref{def:PermissibleArc} that a closed curve which cuts out a disk with $m \geq 0$ punctures from $V^*$ and no punctures from $V$ is not permissible. 

\begin{prop}\label{prop:BijArcsStrings}
There is a bijection between equivalence classes of bands (i.e. doubly-infinite periodic words) of $\overline{Q}$ and repetition equivalence classes of permissible closed curves on the surface with dissection $(S_{\overline{Q}},V_{\overline{Q}} \cup V^*_{\overline{Q}},D_{\overline{Q}})$.
\end{prop}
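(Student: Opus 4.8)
The strategy is to leverage the analogous statement for strings and permissible arcs (Proposition~\ref{prop:bijectionPPP}, together with the gentle-case result of \cite{Baur-Coelho-Simoes-geometric-model-module-cat-gentle}) and then transfer it to the cyclic/periodic setting. First I would describe the two maps explicitly. Given a band $C = \dots C_{-1}C_0 \mid C_1 C_2 \dots$, periodic of period $n$, I would build a closed curve $\xi_C$ by the same local recipe used for arcs: as the curve passes through faces of type~2 it records the labels $C_1, \dots, C_n$, with a direct letter occurring precisely when the shared endpoint of the two bounding $D$-arcs on that face lies to the right of the (oriented) curve, and then closes up because $C[n] = C$. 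Conversely, given a permissible closed curve $\xi$, reading off the faces of type~2 it crosses (with signs determined by left/right as above) produces a bi-infinite sequence of letters which is periodic by construction; conditions (3) and (4) of Definition~\ref{def:PermissibleArc} ensure the word is admissible (no $C_i^{-1} = C_{i+1}$ and no subword in $Z$), exactly as in the arc case.

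Next I would check well-definedness on equivalence classes in both directions. On the combinatorial side, shifting a band $C \mapsto C[d]$ and inverting $C \mapsto C^{-1}$ correspond, respectively, to re-choosing the basepoint of the closed curve and reversing its orientation; so the map descends to equivalence classes of bands. On the geometric side, I must show that repetition-equivalent closed curves (Definition~\ref{def:EquivClosedCurves}) give the same band. The key point is that if $\xi$ is the $m$-fold repetition $\eta^m$ of a primitive closed curve $\eta$, then the letter sequence read off $\xi$ is just the $m$-fold concatenation of that of $\eta$, hence the \emph{same} bi-infinite periodic word up to the identification of a word with its shifts; conversely a band of period $n$ always yields a primitive closed curve, since a non-primitive curve would read off a word of strictly smaller period. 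This is where the hypothesis that we take $n$ to be the \emph{minimal} period in Definition~\ref{defn:periodic} is used, and it is the crux of matching ``primitive'' with ``minimal period''.

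I would then argue the two constructions are mutually inverse. Starting from a band, building $\xi_C$, and reading it back recovers $C$ up to shift and inversion because the face-label recipe is literally the inverse of the curve-drawing recipe on each crossing — this is the periodic analogue of the bijection in Proposition~\ref{prop:bijectionPPP}, and I would reduce to that statement by ``cutting'' the closed curve at one crossing to obtain a finite permissible arc-like segment of length $n$, applying the string/arc correspondence to that segment, and then re-gluing; the compatibility of gluing with the local recipe is immediate from the definitions. The homotopy statements (that the curve is well-defined up to homotopy, and that distinct bands give non-homotopic primitive curves) follow because the dual dissection $D^*$ pins down each face of type~2 uniquely, so the sequence of crossed arcs with signs is a complete homotopy invariant of a permissible closed curve relative to $D$, just as for accordions in \cite{Palu-Pilaud-Plamondon-non-kissing-non-crossing}.

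\textbf{Main obstacle.} The technical heart is the identification of \emph{primitive} closed curves with bands of \emph{minimal} period, i.e.\ verifying that the equivalence relation $\sim$ generated by repetitions on curves matches exactly the shift-and-inverse equivalence on words, with no extra collapsing on either side. One direction — a repetition of a curve reads off a repetition of the word — is routine. The subtle direction is ruling out that two genuinely different primitive curves could read off the same (minimal-period) word, and that a band could fail to produce a primitive curve; both come down to the fact that a common refinement of homotopy representatives realizing $\xi \sim \xi'$ via repetitions $\xi^a \simeq (\xi')^b$ forces $\gcd$-type relations on the periods, which one resolves using primitivity. I would handle this by the standard covering-space / minimal-period argument for free homotopy classes of loops on a surface, adapted to the combinatorial bookkeeping of crossed arcs, and I expect this to be the only part requiring genuine care beyond quoting Proposition~\ref{prop:bijectionPPP} and the gentle-case results.
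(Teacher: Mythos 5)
Your proposal follows essentially the same route as the paper's proof: in both directions one reads letters off the type-2 faces the closed curve traverses, with the direct/inverse sign determined by which side of the oriented curve the shared endpoint of the two crossed $D$-arcs lies, and conversely one stitches together segments from $\tau_{t(C_i)}$ to $\tau_{h(C_i)}$ and closes up using periodicity. The paper dispatches the well-definedness on equivalence classes in a single sentence, whereas you correctly flag the primitive-vs.-minimal-period matching as the point needing care; your proposed shortcut of cutting the loop at one crossing and quoting Proposition~\ref{prop:bijectionPPP} would need some massaging (the cut endpoints lie in face interiors rather than in $V$, so the string/arc dictionary does not apply verbatim), which is why the paper argues directly rather than by reduction.
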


\begin{proof}
This proof relies on similar techniques as for the gentle case, shown in \cite{Baur-Coelho-Simoes-geometric-model-module-cat-gentle}. 

Given a repetition equivalence class of a permissible closed curve, we choose a primitive representative $\delta: [0,1] \to S$ such that $\delta(0) = \delta(1)$ does not lie on an arc in $D$ and $\delta$ crosses a minimal number of arcs from $D$. Suppose that $\delta$ crosses arcs $\rho_1,\ldots,\rho_d$ and passes through faces $F_0,\ldots,F_d \in \mathcal{F}(D_{\overline{Q}} \cup R_{\overline{Q}}^*)$, with both lists ordered based on the orientation of $\delta$. Our choice of representative guarantees that all faces $F_i$ are of type 2, as in Definition \ref{def:SemilinearTiling} so that they have two bounding arcs from $D_{\overline{Q}}$. Necessarily, $F_0 = F_d$, $\rho_1$ and $\rho_d$ border $F_0 = F_d$, and for $j < d$, $\rho_j$ and $\rho_{j+1}$ border $F_j$.  
 By Corollary \ref{cor:ArrowsToFaces}, the map $\mathcal{L}$ associates an arrow to each such face such that, if $\tau_j$ and $\tau_k$ border $F$, then $F$ is labeled with an arrow $a$ such that $\{h(a),t(a)\} = \{\tau_j, \tau_k\}$. Let $\mathcal{L}(F_j) = a_j$. 

Since $\delta$ is permissible, we know that for each $j < d$, the arcs $\rho_j$ and $\rho_{j+1}$ share an endpoint $p_j$ which lies on the boundary of $F_j$ and such that the connected component of $F_j \backslash \delta$ which contains $p_j$ is an unpunctured disk. Similarly, $\tau_1$ and $\tau_d$ must share an endpoint, which we call $p_d$. For $1 \leq j \leq d$, if $p_j$ lies to the right of $\delta$, let $C_j = a_j$; otherwise, let $C_j = a_j^{-1}$. Necessarily $t(C_d) =  h(C_1)$, and we know that for all $j \leq d$, $C_{j} C_{j+1}$ and $C_{j+1}^{-1} C_{j}^{-1}$ are not in $Z$ by the permissibility of $\delta$ where $C_{d+1}:= C_1$.  
Hence setting $C=C_1 \cdots C_d$, we have that ${}^{\infty}C^{\infty}$ is an admissible, doubly-infinite  periodic word. 

This process can be reversed. That is, given an admissible, doubly-infinite periodic word of period $d$, we take the defining subword $C_1 \cdots C_d$. For each $1 \leq i \leq d$ we draw an oriented segment from $\tau_{t(C_i)}$ to $\tau_{h(C_i)}$. We draw these segments such that property (4) in Definition \ref{def:PermissibleArc} is satisfied. Since $t(C_i) = h(C_{i+1})$ and since $C[d] = C$, $t(C_d) = h(C_1)$, we can connect these segments to form a closed curve. Property (3) of Definition \ref{def:PermissibleArc} is guaranteed since we began with an admissible word. Our choices of arc segments ensures that $\delta$ does not wind around punctures from $V^*$. The admissibility of the word ensures that we cannot find an closed curve homotopic to $\delta$ which is contractible or encloses a disk with punctures only from $V^*$. Therefore, we have built a permissible closed curve.

From these two constructions, it is clear that choosing other primitive representatives of a repetition equivalence class of $\delta$ will generate an equivalent word and vice versa. 
\end{proof}

\begin{cor}\label{cor:BijArcsModules}
There is a correspondence between isomorphism classes of indecomposable finite-dimensional modules over  $\Lambda$ and equivalence classes of permissible arcs or closed curves of $(S_{\overline{Q}},V_{\overline{Q}} \cup V^*_{\overline{Q}},D_{\overline{Q}})$. Explicitly, equivalence classes of permissible arcs correspond to string modules, while repetition equivalence classes (see Definition \ref{def:EquivClosedCurves}) of  permissible closed curves correspond to band modules. 
\end{cor}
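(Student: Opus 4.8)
The plan is to combine the main classification theorem from \cite{BenTenCraBoe}, recalled here as \Cref{BTCB-main-theorem}, with the two geometric bijections established in \Cref{prop:bijectionPPP} and \Cref{prop:BijArcsStrings}. By \Cref{BTCB-main-theorem}, every indecomposable finite-dimensional $\Lambda$-module is of the form $M(C)\otimes_{\Lambda(C)}V$ where $C$ is a string or a band and $V$ is a finite-dimensional indecomposable $\Lambda(C)$-module, and distinct choices (up to equivalence of $C$ and isomorphism of $V$) give non-isomorphic modules. So the first step is to split the classification into the string case and the band case and treat each separately.

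For strings, $\Lambda(C)=K$ has, up to isomorphism, the single finite-dimensional indecomposable module $K$ itself, and $M(C)\otimes_K K \cong M(C)$. Hence the string modules are parametrised exactly by equivalence classes of finite admissible words, and \Cref{prop:bijectionPPP} identifies these with equivalence classes of permissible arcs on $(S_{\overline{Q}},V_{\overline{Q}}\cup V^*_{\overline{Q}},D_{\overline{Q}})$. For bands, $\Lambda(C)=K[x,x^{-1};\pi_C]$ is a (skew) principal ideal domain, and its finite-dimensional indecomposable modules are classified by Jacobson's theory, as already invoked in \S\ref{subsec-string-and-band-modules}; in particular for a fixed band $C$ these modules persist independently of the geometry. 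So \Cref{prop:BijArcsStrings} identifies equivalence classes of bands with repetition equivalence classes of permissible closed curves, and the extra datum $V$ is carried along unchanged on both sides. Assembling the two cases, and recalling that a word is either finite (a string) or doubly-infinite periodic (a band) so the two families of curves (arcs versus closed curves) are disjoint and exhaust the relevant geometric objects, yields the stated correspondence; the explicit description of which curves go to string versus band modules is then immediate from which bijection was used.

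The only real subtlety — and the step I would be most careful about — is bookkeeping around the parameter $V$: the correspondence is between isomorphism classes of indecomposables and equivalence classes of arcs or closed curves, so I should be precise that in the band case the closed curve records the band $C$ and not the module $V$, meaning strictly speaking one obtains a correspondence between equivalence classes of curves and the bands, with $V$ an additional independent parameter on the module side; phrased as in the statement this is fine because the corollary only claims a correspondence "between isomorphism classes of indecomposable finite-dimensional modules and equivalence classes of permissible arcs or closed curves" in the loose sense already standard in the gentle literature (cf.\ \cite{Baur-Coelho-Simoes-geometric-model-module-cat-gentle}), but I would state this caveat explicitly rather than sweep it under the rug. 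No genuinely new argument is needed beyond this assembly.

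\begin{proof}
By \Cref{BTCB-main-theorem}, as $C$ ranges over representatives of equivalence classes of strings and bands and $V$ ranges over finite-dimensional indecomposable $\Lambda(C)$-modules, the modules $M(C)\otimes_{\Lambda(C)}V$ form a complete, irredundant list of finite-dimensional indecomposable $\Lambda$-modules.

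If $C$ is a string then $\Lambda(C)=K$ and the unique finite-dimensional indecomposable $\Lambda(C)$-module is $K$, with $M(C)\otimes_K K\cong M(C)$. Thus the indecomposable string modules are in bijection with equivalence classes of finite admissible words of $\overline{Q}$. By \Cref{prop:bijectionPPP} these equivalence classes are in bijection with equivalence classes of permissible arcs on $(S_{\overline{Q}},V_{\overline{Q}}\cup V^*_{\overline{Q}},D_{\overline{Q}})$. Composing gives the asserted correspondence on string modules.

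If $C$ is a band then $\Lambda(C)=K[x,x^{-1};\pi_C]$, and by \Cref{prop:BijArcsStrings} equivalence classes of bands are in bijection with repetition equivalence classes of permissible closed curves on $(S_{\overline{Q}},V_{\overline{Q}}\cup V^*_{\overline{Q}},D_{\overline{Q}})$, the extra parameter $V$ being carried along unchanged. Since every non-trivial admissible word is either finite (a string) or doubly-infinite periodic (a band), and since permissible arcs and permissible closed curves are disjoint classes of geometric objects, the union of the two bijections above yields a correspondence between isomorphism classes of indecomposable finite-dimensional $\Lambda$-modules and equivalence classes of permissible arcs or closed curves, in which permissible arcs correspond to string modules and repetition equivalence classes of permissible closed curves correspond to band modules.
\end{proof}
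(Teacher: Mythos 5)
Your proof is correct and takes essentially the same route as the paper, which simply cites \Cref{prop:bijectionPPP}, \Cref{prop:BijArcsStrings}, and \Cref{BTCB-main-theorem} and combines them. The caveat you flag about the parameter $V$ in the band case is exactly why the statement speaks of a \emph{correspondence} rather than a bijection, so your bookkeeping is faithful to the intended meaning.
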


\begin{proof}
    Combine \cref{prop:bijectionPPP} and  \cref{prop:BijArcsStrings} with \cref{BTCB-main-theorem}. 
\end{proof}

Definitions \ref{defn-semilinearity-of-permissible-arcs} and \ref{defn-semilinearity-of-closed-curves} introduce the \emph{semilinearity} of  
 arcs and closed curves. 
\begin{defn}
\label{defn-semilinearity-of-permissible-arcs}
    Let $(S_{\overline{Q}},V_{\overline{Q}} \cup V^*_{\overline{Q}}, D_{\overline{Q}}, R^*_{\overline{Q}}, \boldsymbol{\ell})$ be a labeled tiling coming from $\overline{Q}$, and let $\gamma\colon [0,1]\to S$ be a permissible arc.
    Write  $(\rho_{0},\rho_{1},\dots,\rho_{d-1})$ for the sequence of arcs in $D_{\overline{Q}}$ that $\gamma$ crosses. 
    Since $\gamma$ is a permissible arc, for each $0 \leq i < d-1$, the arcs $\rho_i$ and $\rho_{i+1}$ share an endpoint $p_i$ such that the disk cut out by $\gamma,\rho_i,$ and $\rho_{i+1}$ which contains $p_i$ does not contain any point from $V^*$. Let also $F_i \in \mathcal{F}(D \cup R^*)$ be the face which $\gamma$ passes through immediately before crossing $\rho_{i+1}$, where $F_d$ is the last face $\gamma$ passes through. Note that $F_i, 1 \leq i \leq d-1$ is necessarily a face of type 2.
    
    For each $i=0,\dots,d-1$ we define the \emph{semilinearity of} $\gamma$ \emph{in place} $i$ to be the automorphism $\sigma_{\gamma,i}\in \Aut(K)$ defined as follows.     
    Firstly let   $\sigma_{\gamma,0}$ be the identity on $K$. 
    Secondly, for $0\leq i< d-1$, if $p_i$ lies to the right of $\gamma$, then $\sigma_{\gamma,i+1}=\boldsymbol{\ell}(F_i)^{-1}\circ \sigma_{\gamma,i}$; otherwise, $p_i$ lies to the left of $\gamma$, and $\sigma_{\gamma,i+1}=\boldsymbol{\ell}(F_i)\circ \sigma_{\gamma,i}$.
\end{defn}


\begin{defn}
\label{defn-semilinearity-of-closed-curves}
     Let $(S_{\overline{Q}},V_{\overline{Q}} \cup V^*_{\overline{Q}}, D_{\overline{Q}}, R^*_{\overline{Q}}, \boldsymbol{\ell})$ be a labeled tiling coming from $\overline{Q}$, and let $\gamma\colon [0,1]\to S$ be a closed curve. 
    Write   $(\rho_{0},\dots,\rho_{d-1})$ for the sequence of arcs in $D_{\overline{Q}}$ that $\gamma$ crosses . 
   Since $\gamma$ is a permissible closed curve, for each $0 \leq i < d$, the arcs $\rho_i$ and $\rho_{i+1}$ share an endpoint $p_i$ such that the disk cut out by $\gamma,\rho_i,$ and $\rho_{i+1}$ which contains $p_i$ does not contain any point from $V^*$. The same is true for $\rho_0$ and $\rho_{d-1}$, and we call this point $p_{d-1}$. Let also $F_i \in \mathcal{F}(D \cup R^*)$ be the face which $\gamma$ passes through immediately before crossing $\rho_{i+1}$, and set $F_d = F_0$.  Note that all faces $F_i$ are of type 2.  
    
    For each $i=0,\dots,d$ we define the \emph{semilinearity of} $\gamma$ \emph{in place} $i$ to be the automorphism $\sigma_{\gamma,i}\in \Aut(K)$ defined as follows.     
    Firstly let $\sigma_{\gamma,0}$ be the identity on $K$. 
    Secondly, for $0\leq i< d-1$, if $p_i$ lies to the right of $\gamma$, then $\sigma_{\gamma,i+1}=\boldsymbol{\ell}(F_i)^{-1}\circ \sigma_{\gamma,i}$; otherwise, $p_i$ lies to the left of $\gamma$, and $\sigma_{\gamma,i+1}=\boldsymbol{\ell}(F_i)\circ \sigma_{\gamma,i}$.
   Thirdly, if $p_{d-1}$ lies to the right of $\gamma$, we let $\sigma_{\gamma,d}=\boldsymbol{\ell}(F_{d-1})^{-1}\circ \sigma_{\gamma,p-1}$; otherwise, $p_{d-1}$ lies to the left of $\gamma$, and $\sigma_{\gamma,d} = \boldsymbol{\ell}(F_{d-1})\circ \sigma_{\gamma,d-1}$.

\end{defn}

Our final main result is showing that arcs and closed curves on labeled tilings model the set of finite-dimensional indecomposable modules of a semilinear locally gentle algebras. 

\begin{thm}\label{thm:PiMatchesArcSemilinearity}
Let $(S_{\overline{Q}}, V_{\overline{Q}} \cup V^*_{\overline{Q}}, D_{\overline{Q}}, R^*_{\overline{Q}}, \boldsymbol{\ell})$ be the labeled tiling associated to $\Lambda$. Let $M=M(C)\otimes_{\Lambda(C)} V$ for a string or band $C$ and $V$ a finite-dimensional indecomposable $\Lambda(C)$-module. 
Let $\gamma$ be a permissible arc or closed curve in $S$ corresponding to $M$. 

\begin{enumerate}
    \item For any $i\in V(C)$ the automorphism $\pi_{i}$ from \Cref{defn:natural-automorphisms} is precisely the semilinearity of the permissible arc $\gamma$ at $i$. 
    \item If $C$ is doubly-infinite periodic of period $n$, then the automorphism  $\pi^{-1}_C$ from \Cref{defn:right-module-structure} is precisely the semilinearity of the closed curve $\gamma$ at $n$. 
\end{enumerate}
\end{thm}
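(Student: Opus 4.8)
The goal is to show that the two ways of assigning automorphisms of $K$ to the vertices of $Q(C)$ — the recursive definition $\pi_i$ built from $\boldsymbol{\sigma}$ along the arrows $\theta_i$ of $Q(C)$, and the geometric ``semilinearity'' $\sigma_{\gamma,i}$ read off from the face labels $\boldsymbol{\ell}$ that $\gamma$ crosses — agree. The natural strategy is induction on $i$, simultaneously reconciling the combinatorial recursion in \Cref{defn:natural-automorphisms} with the geometric recursion in \Cref{defn-semilinearity-of-permissible-arcs}. The base case is immediate: both $\pi_0$ and $\sigma_{\gamma,0}$ are the identity on $K$ by definition.

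\textbf{Step 1: match the data.} First I would unpack the bijection of \Cref{prop:bijectionPPP} (equivalently \Cref{prop:BijArcsStrings} in the band case) to see precisely how the crossing sequence $\rho_0,\dots,\rho_{d-1}$ of $\gamma$ and the faces $F_0,\dots,F_{d-1}$ it passes through determine the letters $C_i$ of the word. The key facts, already recorded in \S\ref{subsec:our-model}: the arc $\rho_i$ is exactly $\tau_{v_i(C)}$ for the vertex $v_i(C)$ of $Q$, the face $F_i$ (a type-2 face) carries the label $\boldsymbol{\ell}(F_i)=\sigma_{a_i}$ where $\mathcal{L}(a_i)=F_i$ and $a_i = f_C(\theta_{i+1})$ is the arrow in $Q$ underlying the $(i{+}1)$st letter, and the sign of the letter $C_{i+1}$ is direct precisely when the shared endpoint $p_i$ of $\rho_i,\rho_{i+1}$ lies to the \emph{right} of $\gamma$ (this is the convention fixed in the discussion before \Cref{prop:BijArcsStrings} and in the proof of \Cref{prop:BijArcsStrings}). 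So ``$C_{i+1}$ direct $\Leftrightarrow$ $p_i$ on the right of $\gamma$'' is the dictionary entry I need.

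\textbf{Step 2: the inductive step.} Assume $\pi_i = \sigma_{\gamma,i}$. There is a unique arrow $\theta_{i+1}\in A(C)$ incident at $i$ and $i+1$, with $f_C(\theta_{i+1})=a_i$. In $Q(C)$, $C_{i+1}$ direct means $h_C(\theta_{i+1})=i$ and $t_C(\theta_{i+1})=i+1$, so \Cref{defn:natural-automorphisms} gives $\pi_i = \pi_{h(\theta_{i+1})} = \sigma_{a_i}\pi_{t(\theta_{i+1})} = \sigma_{a_i}\pi_{i+1}$, i.e. $\pi_{i+1} = \sigma_{a_i}^{-1}\pi_i$. On the geometric side, $C_{i+1}$ direct means $p_i$ lies to the right of $\gamma$, so \Cref{defn-semilinearity-of-permissible-arcs} gives $\sigma_{\gamma,i+1} = \boldsymbol{\ell}(F_i)^{-1}\circ\sigma_{\gamma,i} = \sigma_{a_i}^{-1}\sigma_{\gamma,i}$. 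These match by the inductive hypothesis. The inverse case is symmetric: $C_{i+1}$ inverse gives $h_C(\theta_{i+1})=i+1$, $t_C(\theta_{i+1})=i$, hence $\pi_{i+1} = \sigma_{a_i}\pi_i$, while $p_i$ then lies to the left of $\gamma$, giving $\sigma_{\gamma,i+1} = \boldsymbol{\ell}(F_i)\circ\sigma_{\gamma,i} = \sigma_{a_i}\sigma_{\gamma,i}$; again these agree. This proves (1) for all $i\in V(C)$, and for a band it proves $\pi_i = \sigma_{\gamma,i}$ for $0\le i\le n$, where the last step $i=n$ is exactly the ``$p_{d-1}$ to the right/left'' clause of \Cref{defn-semilinearity-of-closed-curves} with $d=n$ — so $\sigma_{\gamma,n}=\pi_n$, and thus the semilinearity of the closed curve at $n$ is $\pi_n = (\pi_C)^{-1}$ by \Cref{defn:right-module-structure}, giving (2).

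\textbf{Expected main obstacle.} The calculation itself is routine once the dictionary is in place; the delicate point is pinning down the sign/handedness conventions so that ``direct letter $\leftrightarrow$ $p_i$ on the right $\leftrightarrow$ $\boldsymbol{\ell}(F_i)^{-1}$'' is internally consistent across \Cref{defn:quiver-from-word}, the arc-to-word recipe preceding \Cref{prop:BijArcsStrings}, and \Cref{defn-semilinearity-of-permissible-arcs}. In particular one must be careful that the arrow $a_i$ labelling $F_i$ is $f_C(\theta_{i+1})$ (the letter $\gamma$ is ``about to produce'' when it leaves $F_i$) and not $f_C(\theta_i)$, and that the orientation of $\theta_{i+1}$ in $Q(C)$ — left-pointing for direct letters — is the one that makes $\pi$ decrease by $\sigma_{a_i}$ rather than increase. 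I would verify this alignment once, carefully, on the running example (the word $C = \eta\delta^{-1}\alpha\nu\beta\alpha\nu\zeta^{-1}$ of \Cref{running-example-words}, using the $\pi_i$ computed in \Cref{ex:pi} and the labelled tiling of \Cref{fig:semilinear tiling}), and then the general induction goes through without friction. For the band case, the only extra care needed is that the period-$n$ subword closes up consistently, which is precisely what the final clause of \Cref{defn-semilinearity-of-closed-curves} encodes and which the induction already covers.
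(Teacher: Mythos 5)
Your proof is correct and takes essentially the same route as the paper: induction on $i$, reading the combinatorial recursion for $\pi_i$ (\Cref{defn:natural-automorphisms}) against the geometric recursion for $\sigma_{\gamma,i}$ (\Cref{defn-semilinearity-of-permissible-arcs}), with the direct/inverse case split governed by whether $p_i$ lies to the right or left of $\gamma$. Your indexing (attaching the label $\boldsymbol{\ell}(F_i)=\sigma_{f_C(\theta_{i+1})}$ to the face $F_i$ between $\rho_i$ and $\rho_{i+1}$) is in fact slightly more carefully aligned with \Cref{defn-semilinearity-of-permissible-arcs} than the paper's own write-up, which shifts the face index by one; both are internally consistent and the argument is the same.
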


\begin{proof}
Recall \Cref{cor:BijArcsModules}. 
Given a string or band $C$, we prove $\pi_i = \sigma_{\gamma,i}$ inductively. This is clear for $i = 0$, as $\pi_i = id_K = \sigma_{\gamma,i}$. Now assume that $C$ is either doubly-periodic infinite or finite with length $d-1 \geq i$, and assume $\pi_{i-1} = \sigma_{\gamma,i-1}$. Assume first that $C_i = a$ is direct. This means $i-1 = h(\theta_i)$, so by \Cref{defn:natural-automorphisms}, $\pi_i = \sigma_{a}^{-1} \circ \pi_{i-1}$. 

In the bijection described in Propositions \ref{prop:bijectionPPP} and \ref{prop:BijArcsStrings}, we know that the arc $\gamma$ associated to $C$ crosses $m \geq d-1$ arcs, $\rho_0,\ldots,\rho_{m} \in D_{\overline{Q}}$ where for $0 \leq j \leq m$, $\rho_j = \tau_{v_j(C)}$. Moreover, between crossing $\rho_{j-1}$ and $\rho_j$, $\gamma$ passes through the face $F_j \in \mathcal{F}(D_{\overline{Q} \cup R_{\overline{Q}}^*})$ such that $\mathcal{L}^{-1}(F_j) = f_C(\theta_j)$. So in particular, between the intersection points of $\rho_{i-1}$ and $\rho_i$, $\gamma$ passes through a face $F_i$ with $\mathcal{L}^{-1}(F_i) = a$. The fact that $C_i = a$ is direct means that, in $F$, $\rho_{i-1}$ and $\rho_i$ share an endpoint which lies to the right of $\gamma$. Therefore, $\sigma_{\gamma,i} = \boldsymbol{\ell}(F)^{-1} \circ  \sigma_{\gamma,i} = \sigma_{\mathcal{L}^{-1}(F)}^{-1} \circ  \sigma_{\gamma,i-1}  = \sigma_a^{-1} \circ  \sigma_{\gamma,i-1}$, implying that $\pi_i = \sigma_{\gamma,i}$.

If instead $C_i$ was an inverse arrow, we would have a similar process to show that  knowing $\pi_{i-1} = \sigma_{\gamma,i-1}$ implies $\pi_i = \sigma_{\gamma,i}$. 
Hence (1) holds, and (2) follows immediately. 
\end{proof}

\begin{example}\label{running-example:model}
We once again use the algebra from Example \ref{running-example:zembyk-excision}, whose surface model is given in Example \ref{running-example:tiling}. We add two marked points to $V_{Q}$, as per Setup \ref{setup:our-model}. 

\Cref{subfig:running-example:easy} shows the permissible arc $g$ corresponding to the string $\zeta\nu^{-1} \sim \nu\zeta^{-1}$. 
If we follow $g$ going from the top to the bottom, the arcs in $D_{\overline{Q}}$ that we cross are $(\rho_0, \rho_1, \rho_2)=(\tau_1, \tau_3, \tau_4)$, in that order (using the correspondence between arcs in $D_{\overline{Q}}$ and vertices of $Q$). As per \Cref{defn-semilinearity-of-permissible-arcs}, the semilinearity of $g$ at place $i = 0,1,2$ is 
given by $   \sigma_{g, 0} = 1_K$, $\sigma_{g, 1} = \sigma_\nu^{-1}$ and $\sigma_{g, 2} = \sigma_\zeta  \sigma_\nu^{-1}$. 
Comparing this with Example \ref{ex:pi} demonstrates how Theorem \ref{thm:PiMatchesArcSemilinearity} works.

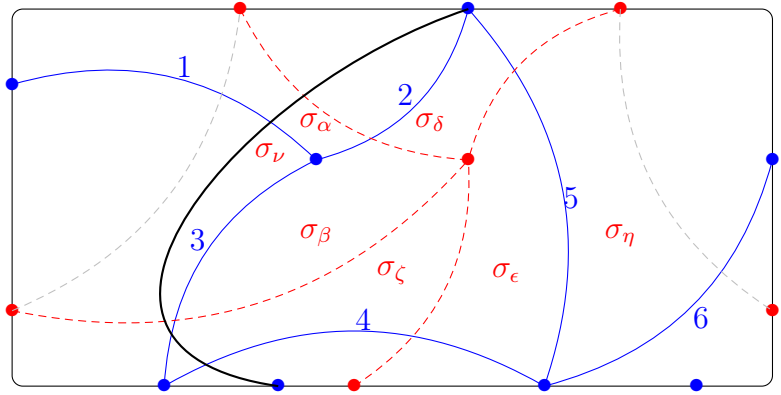
\begin{figure}[H]
    \centering
    \begin{tikzpicture}[inner sep=0.5, outer sep = 0]
        \draw[rounded corners] (0,1)--(0,5)--(10,5)--(10,0)--(0,0)--(0,1);
        
        \begin{scope}[blue]
        \node  (a) at (0,4) {$\bullet$};
        \node (b) at (2,0) {$\bullet$};
        \node (c) at (4,3) {$\bullet$};
        \node (d) at (6,5) {$\bullet$};
        \node (e) at (7,0) {$\bullet$};
        \node (f) at (10,3) {$\bullet$};
        \node (g) at (9,0) {$\bullet$};
        \node (h) at (3.5,0) {$\bullet$};

        \draw (a.center) to[bend left] node[above right] {1} (c.center);
        \draw (d.center) to[bend left] node[above left] {2} (c.center);
        \draw (b.center) to[bend left] node[above left] {3} (c.center);
        \draw (b.center) to[bend left] node[above right] {4} (e.center);
        \draw (d.center) to[bend left] node[below right] {5} (e.center);
        \draw (f.center) to[bend left] node[below right] {6} (e.center);

        \end{scope}
        \begin{scope}[red, densely dashed]
        \node (i) at (3,5) {$\bullet$};
        \node (ii) at (0,1) {$\bullet$};
        \node (iii) at (6,3) {$\bullet$};
        \node (iv) at (4.5,0) {$\bullet$};
        \node (v) at (8,5) {$\bullet$};
        \node (vi) at (10,1) {$\bullet$};
        
        \draw (i.center) to[bend right] (iii.center);
        \draw (ii.center) to[bend right] (iii.center);
        \draw (iv.center) to[bend right] (iii.center);
        \draw (v.center) to[bend right] (iii.center);
        
        \node at (4, 3.5) {$\sigma_\alpha$};
        \node at (4, 2) {$\sigma_\beta$};
        \node at (3.4, 3.1) {$\sigma_\nu$};
        \node at (5.5, 3.5) {$\sigma_\delta$};
        \node at (6.5, 1.5) {$\sigma_\epsilon$};
        \node at (8, 2) {$\sigma_\eta$};
        \node at (5, 1.5) {$\sigma_\zeta$};
        \end{scope}
         
        \begin{scope}[lightgray, densely dashed]
        \draw (ii.center) to[bend right] (i.center);
        \draw (v.center) to[bend right] (vi.center);
        \end{scope}
        
        \begin{scope}[black, thick]
        \draw (h.center) .. controls (0.05,0.5) and (3,4) 
        ..(d.center);
        \end{scope}
        
    \end{tikzpicture}
    \caption{A permissible arc corresponding to a string module. }
    \label{subfig:running-example:easy}
\end{figure}

\Cref{subfig:running-example:complicated} shows the permissible arc corresponding to the string $\eta\delta^{-1}\alpha\nu\beta\alpha\nu \zeta^{-1}$. One can check that the semilinearity of this arc matches the values $\pi_i$ given in Example \ref{ex:pi}.

\begin{figure}[H]
    \centering
    \begin{tikzpicture}[inner sep=0.5, outer sep = 0]
        \draw[rounded corners] (0,1)--(0,5)--(10,5)--(10,0)--(0,0)--(0,1);
        
        \begin{scope}[blue]
        \node  (a) at (0,4) {$\bullet$};
        \node (b) at (2,0) {$\bullet$};
        \node (c) at (4,3) {$\bullet$};
        \node (d) at (6,5) {$\bullet$};
        \node (e) at (7,0) {$\bullet$};
        \node (f) at (10,3) {$\bullet$};
        \node (g) at (9,0) {$\bullet$};
        \node (h) at (3.5,0) {$\bullet$};

        \draw (a.center) to[bend left] node[above right] {1} (c.center);
        \draw (d.center) to[bend left] node[above left] {2} (c.center);
        \draw (b.center) to[bend left] node[above left] {3} (c.center);
        \draw (b.center) to[bend left] node[above right] {4} (e.center);
        \draw (d.center) to[bend left] node[below right] {5} (e.center);
        \draw (f.center) to[bend left] node[below right] {6} (e.center);

        \end{scope}
        \begin{scope}[red, densely dashed]
        \node (i) at (3,5) {$\bullet$};
        \node (ii) at (0,1) {$\bullet$};
        \node (iii) at (6,3) {$\bullet$};
        \node (iv) at (4.5,0) {$\bullet$};
        \node (v) at (8,5) {$\bullet$};
        \node (vi) at (10,1) {$\bullet$};
        
        \draw (i.center) to[bend right] (iii.center);
        \draw (ii.center) to[bend right] (iii.center);
        \draw (iv.center) to[bend right] (iii.center);
        \draw (v.center) to[bend right] (iii.center);
        
        \node at (3.2, 4) {$\sigma_\alpha$};
        \node at (4, 2.5) {$\sigma_\beta$};
        \node at (3.4, 3) {$\sigma_\nu$};
        \node at (5.5, 3.5) {$\sigma_\delta$};
        \node at (6.5, 1.5) {$\sigma_\epsilon$};
        \node at (8, 2) {$\sigma_\eta$};
        \node at (5, 1.5) {$\sigma_\zeta$};
                \end{scope}
         
        \begin{scope}[lightgray, densely dashed]
        \draw (ii.center) to[bend right] (i.center);
        \draw (v.center) to[bend right] (vi.center);
        \end{scope}

        \begin{scope}[black, thick]
        \draw (g.center) .. controls (8,6) and (6,4) .. (4,4) arc[start angle=90, end angle=270, radius=1] -- (3.9,2) arc[start angle= 270, delta angle = 180, radius=0.7]  --  (3.6,3.35) .. controls (2.5,3) and (0.25,1) ..  (h.center);
        \end{scope}

    \end{tikzpicture}
    \caption{A permissible arc that winds around a marked point. }
    \label{subfig:running-example:complicated}

\end{figure}
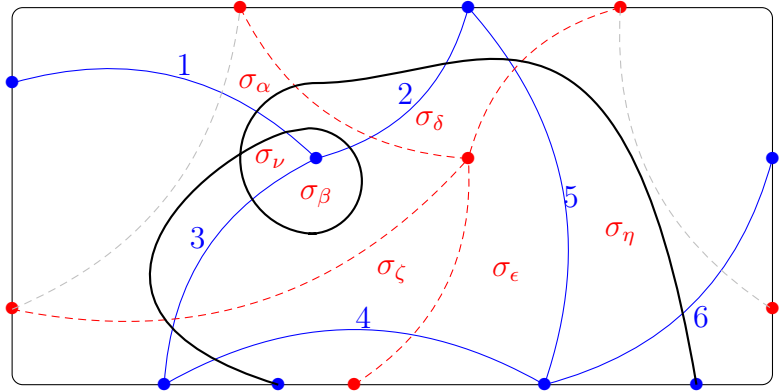

\Cref{subfig:running-example:closed} shows the closed curve $\mu$ corresponding to the periodic band $\ldots \nu\beta\alpha\ldots$. Suppose that $\mu(0)$ lies in the face labeled by $\sigma_\nu$ and the parameterisation is such that $\mu$ travels clockwise. Then, following \Cref{defn-semilinearity-of-closed-curves}, we can compute that $\sigma_{\mu,3} = \sigma_\nu^{-1} \sigma_\beta^{-1} \sigma_\alpha^{-1}$.

\begin{figure}[H]
    \centering
    \begin{tikzpicture}[inner sep=0.5, outer sep = 0]
        \draw[rounded corners] (0,1)--(0,5)--(10,5)--(10,0)--(0,0)--(0,1);
        
        \begin{scope}[blue]
        \node  (a) at (0,4) {$\bullet$};
        \node (b) at (2,0) {$\bullet$};
        \node (c) at (3.3,3) {$\bullet$};
        \node (d) at (6,5) {$\bullet$};
        \node (e) at (7,0) {$\bullet$};
        \node (f) at (10,3) {$\bullet$};
        \node (g) at (9,0) {$\bullet$};
        \node (h) at (3.5,0) {$\bullet$};

        \draw (a.center) to[bend left] node[above right] {1} (c.center);
        \draw (d.center) to[bend left] node[pos=0.3, above left] {2} (c.center);        
        \draw (b.center) to[bend left] node[above left] {3} (c.center);
        \draw (b.center) to[bend left] node[above right] {4} (e.center);
        \draw (d.center) to[bend left] node[below right] {5} (e.center);
        \draw (f.center) to[bend left] node[below right] {6} (e.center);
        \end{scope}
        
        \begin{scope}[red, densely dashed]
        \node (i) at (3,5) {$\bullet$};
        \node (ii) at (0,1) {$\bullet$};
        \node (iii) at (6,3) {$\bullet$};
        \node (iv) at (4.5,0) {$\bullet$};
        \node (v) at (8,5) {$\bullet$};
        \node (vi) at (10,1) {$\bullet$};
        
        \draw (i.center) to[bend right] (iii.center);
        \draw (ii.center) to[bend right] (iii.center);
        \draw (iv.center) to[bend right] (iii.center);
        \draw (v.center) to[bend right] (iii.center);
        
        \node at (3.5, 3.4) {$\sigma_\alpha$};
        \node at (3.5, 2.5) {$\sigma_\beta$};
        \node at (2.8, 3.1) {$\sigma_\nu$};
        \node at (6, 4) {$\sigma_\delta$};
        \node at (6.5, 1.5) {$\sigma_\epsilon$};
        \node at (8, 2) {$\sigma_\eta$};
        \node at (5, 1.5) {$\sigma_\zeta$};
        \end{scope}
         
        \begin{scope}[lightgray, densely dashed]
        \draw (ii.center) to[bend right] (i.center);
        \draw (v.center) to[bend right] (vi.center);
        \end{scope}
        
        \begin{scope}[black, thick]
        \draw (c) circle[radius=0.9];
        \end{scope}
    \end{tikzpicture}
    \caption{A closed curve.}
    \label{subfig:running-example:closed}
\end{figure}
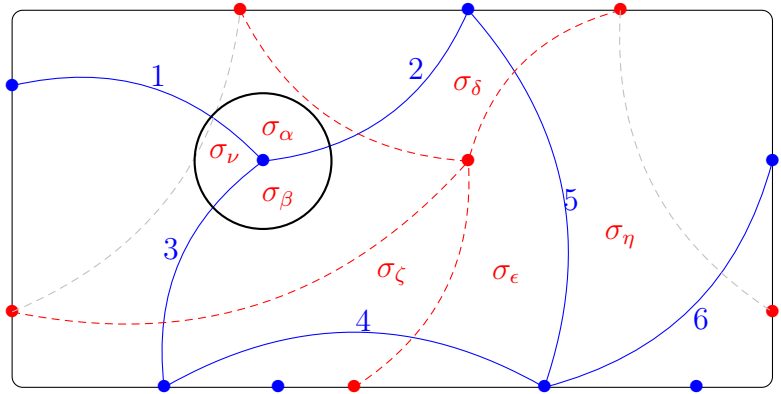

\end{example}

\begin{acknowledgements}
    The authors would like to thank Isobel Webster for useful discussions in the early stages of the development of the article. The first, second and third listed authors are grateful for  support by the Danish National Research Foundation (DNRF156); the Independent Research Fund Denmark (1026-00050B); and the Aarhus University Research Foundation (AUFF-F-2020-7-16). The third listed author was also supported by the Norwegian Research Council via the project "Higher homological algebra and tilting theory" (301046). The fourth listed author thanks Aarhus University for the hospitality during her visit in October 2023.
\end{acknowledgements}

\bibliographystyle{abbrv}
\bibliography{references}


\end{document}